\documentclass[opre,nonblindrev]{informs3} 

\usepackage{caption}
\usepackage{anyfontsize}
\usepackage{subfiles}
\usepackage{xr}
\usepackage{float}
\def\biblio{\bibliographystyle{plainnat}\bibliography{../DataDrivenRobustBibliography}}
\usepackage{accents}
\usepackage{natbib}
 \bibpunct[, ]{(}{)}{,}{a}{}{,}%
\TheoremsNumberedThrough     
\ECRepeatTheorems
\EquationsNumberedThrough    
\RequirePackage[shortlabels]{enumitem}
\usepackage{mathtools}
\usepackage{enumitem} 
\usepackage{dsfont} 
\usepackage{mathrsfs}  
\usepackage{graphicx} 
\usepackage{subcaption} 
\usepackage{authblk}

\providecommand{\customgenericname}{}
\newcommand{\newcustomtheorem}[2]{%
	\newenvironment{#1}[1]
	{
		\renewcommand\customgenericname{#2}%
		\renewcommand\theinnercustomgeneric{##1}%
		\innercustomgeneric
	}
	{\endinnercustomgeneric}
}

\newcustomtheorem{customthm}{Theorem}
\newcustomtheorem{customlemma}{Lemma}

\allowdisplaybreaks

\usepackage{epsfig}
\usepackage{amsmath}     

\usepackage[ruled, vlined, linesnumbered]{algorithm2e}
\let\oldnl\nl
\newcommand{\nonl}{\renewcommand{\nl}{\let\nl\oldnl}}

\usepackage{IEEEtrantools}
\usepackage{multicol}
\usepackage{caption}

\usepackage{algpseudocode}

\usepackage[margin=2.54cm]{geometry}
\usepackage{lipsum}
\usepackage{bigints}
\usepackage{framed}
\usepackage{colortbl}
\usepackage{calligra}
\usepackage[export]{adjustbox}
\usepackage{booktabs}
\usepackage{multirow}
\usepackage{makecell}
    
 \usepackage{wrapfig}
\usepackage{bbm}
\usepackage{soul}
\captionsetup{labelfont=bf,font=small,compatibility=false}
\usepackage[colorlinks=true,linkcolor=black,citecolor=black]{hyperref}
\usepackage{cleveref}

\newcommand{\PP}{\mathbb P}

\newcommand{\mP}{\mathcal{P}}

\newcommand{\Q}{\mathbb Q}

\newcommand{\bm}{\boldsymbol}

\def\RR{ {\mathbb{R}}}

\newcommand{\yl}{{\underline{{y}}}}
\newcommand{\yu}{{\overline{{y}}}}

\newcommand{\EE}{{\mathbb{E}}}

\newcommand{\Wass}{\mathds W}





\let\Q\Rational


\def\st{\textup{ s.t. }}






\renewcommand{\qed}{{\hfill\halmos}}

\usepackage{graphicx}
\usepackage{tikz,ifthen}

   \usetikzlibrary{math}
   \usetikzlibrary{shapes.misc}
   \usetikzlibrary{shapes.multipart,positioning,patterns,backgrounds}

\usepackage{multirow}
\usepackage{array}
\usepackage{cleveref} 


\usepackage{booktabs}
\usepackage{multirow}
\usepackage{makecell}

\usepackage{array}
\newcolumntype{x}[1]{>{\centering\arraybackslash}p{#1}}

\begin{document}

\def\biblio{}

\RUNAUTHOR{Papavassilopoulos et al.}

\RUNTITLE{DRO Approach in Quick Response Models}

\TITLE{A Distributionally Robust Optimization Approach to Quick Response Models under Demand Uncertainty}
\ARTICLEAUTHORS{Panayotis P. Papavassilopoulos\thanks{Department of Industrial and Enterprise Systems Engineering, University of Illinois Urbana-Champaign, Urbana, IL 61801, USA. Email: {\tt ppp7, gah@illinois.edu}.},  
Grani A. Hanasusanto\footnotemark[1], 
and
Yijie Wang\thanks{School of Economics and Management, 
Tongji University, Shanghai, China. Email: {\tt yijiewang@tongji.edu.cn}.}
}

\ABSTRACT{Quick response is a widely adopted strategy to mitigate overproduction in the manufacturing industry, yet recent research reveals a counter-intuitive paradox: while it reduces waste from unsold finished goods, it may incentivize firms to procure more raw materials, potentially increasing total system waste. Additionally, existing models that guide quick response strategies rely on the assumption of a known demand distribution, whereas in practice, demand patterns are often ambiguous and historical data are scarce. To address these challenges, we develop a distributionally robust optimization (DRO) framework for the quick response model that builds robust policies even with limited data. We further integrate a novel waste-to-consumption ratio constraint into this framework, empowering firms to explicitly control the environmental impact of their operations. Our numerical experiments demonstrate that policies optimized for specific demand assumptions suffer severe performance degradation under distributional shifts, whereas our data-driven DRO approach consistently delivers superior robustness. Moreover, we find that the constrained quick response model resolves the central paradox: it can achieve higher profits with verifiably less total waste than a traditional, non-flexible alternative. These results resolve the `quick response or not' debate by showing that the question is not \emph{whether} to use quick response, but \emph{how} to manage it. By incorporating socially responsible metrics as constraints, the quick response system delivers a `win-win' outcome for both profitability and the environment compared to traditional systems.
}

\KEYWORDS{quick response, distributionally robust optimization, operations management, overproduction } 

\maketitle

\section{Introduction} 
\label{sec:intro}

Overproduction remains a critical challenge across the manufacturing industry, causing severe environmental and financial repercussions. While this issue pervades various sectors, the fashion industry serves as a representative example to illustrate the magnitude of this problem. For instance, the production of a single cotton t-shirt requires 2,700 liters of fresh water. The European Parliament reports \citep{EP2021} that textile production, through its dyeing and finishing processes, accounts for approximately 20\(\%\) of global water pollution. In 2020, textile consumption in the EU generated about 270 kg of CO2 emissions per person, ranking it as the fourth highest category for environmental impact. These consequences are magnified by inefficient waste management. Less than half of the used clothing is collected for reuse or recycling, and only 1\% is reprocessed into new apparel. 

In response to this growing crisis, regulatory bodies are enacting stringent policies to drive accountability and foster a circular economy. For instance, new EU regulations will introduce extended producer responsibility schemes, which mandate textile producers to cover the costs associated with the separate collection, sorting, and recycling \citep{EPRS2022}. This policy creates a direct financial incentive for firms to minimize overproduction and waste. In June 2023, the European Parliament formally approved the resolution on the EU Strategy for Sustainable and Circular Textiles, establishing requirements aimed at preventing unsold clothing from being shredded or sent to landfills simply because it is no longer fashionable or warehouse capacity is exceeded \citep{eu_textile_strategy_2022}. The resolution explicitly states that both the absolute quantity of natural resources used in production and the amount of waste generated must be reduced. Destroying unsold or returned goods, including clothing, is considered a waste of value and resources. As a disincentive to this practice, the Ecodesign for Sustainable Products Regulation further proposes a transparency obligation requiring large companies to publicly disclose the number of products they discard or destroy, including textiles, as well as their further treatment (e.g., reuse, recycling, incineration, or landfilling). Subject to empowerment under the proposed regulation and a dedicated impact assessment, the European Commission also plans to introduce bans on the destruction of unsold products, specifically including unsold or returned textiles. As a result, brands that choose not to resell or donate unsold goods in order to protect their brand image will face increased pressure to minimize waste. Collectively, these policies are fundamentally altering the operational landscape, compelling fashion firms to adopt new strategies that can effectively mitigate waste under profound demand uncertainty.

Faced with these escalating environmental and regulatory pressures, the industry has increasingly turned to operational innovations to mitigate the risks of overproduction and waste. A primary strategy in this domain is \emph{quick response}, which fundamentally redesigns the supply chain to better align production with uncertain consumer demand. Unlike traditional models that rely on single-stage forecasts, which often lead to significant supply-demand mismatches, quick response systems delay finished goods production until the true demand is available. This approach enables firms to make smaller initial production runs and then leverage real-time data to reactively manufacture additional units to precisely meet the real demand.

The practice of quick response in fashion can be traced back to the early 2000s: Zara pioneered a strategy where frequent, small-batch deliveries to stores are informed by daily sales data, allowing the company to react to emerging trends within weeks~\citep{ferdows2004rapid}. More recently, ultra-fast fashion players like Shein have taken this to an extreme with a ``test-and-reorder" model~\citep{matsakis2021shein}. They produce a vast number of styles in extremely small initial batches and leverage real-time website and app engagement data to identify demands, which are then immediately ordered in larger quantities. This principle of postponing full production commitment is not limited to fashion and has been powerfully adapted in other industries facing high capital costs and demand uncertainty, such as the burgeoning electric vehicle (EV) market. Leading EV manufacturer Tesla and a rapidly growing firm Xiaomi exemplify this strategy~\citep{Tesla,Xiaomi}. They launch new models with an initial production run to create market presence and fulfill early orders. However, the subsequent large-scale production is directly managed through a pre-order system.  In this system, customers place firm, often non-refundable, and ``lock in" their order.  Only after an order is formally locked in does the vehicle enter the production queue, ensuring that the produced vehicle can be allocated to confirmed sales and preventing overproduction. These diverse examples, from ultra-fast fashion to EV companies, highlight the central role of quick response as a powerful strategy for mitigating the risks of overproduction and waste across industries.

The theoretical formulation of quick response systems serves as a fundamental framework for analyzing deadstock generation. To highlight the effect of this model, we first revisit a classical supply chain management model. In a model without \emph{quick response}, the firm purchases a certain amount of raw material $x$ and produces $q$ units of finished goods without knowing the actual demand $D$. As a result, the firm may either overproduce (leading to excess unsold inventory) or underproduce (failing to satisfy market demand and thus losing potential profit). Specifically, if $q < D$, the firm is unable to meet all of the demand. If $q > D$, the excess inventory is discarded at no cost, and the firm incurs production expenses for goods that did not generate revenue. By contrast, in the \emph{quick response} model, the firm still purchases $x$ units of raw material and initially produces $q$ units of product in the first stage. Later, after the actual demand $D$ is observed, the firm may produce an additional quantity $q_{\delta}$ to meet the excess demand, if any. This second-stage production incurs an extra cost of $\delta$ per unit due to factors such as overtime or expedited labor. If demand is less than or equal to $q$, no additional products are made. If $q > D$, the excess units are again discarded at no cost.

To analyze the environmental and economic implications of such systems, \cite{quick_res} model this decision-making process under the assumption that market demand follows a known uniform distribution. Under this assumption, they derive closed-form expressions for the firm's optimal production and procurement policies. A core finding of their study reveals that the introduction of a quick response system, while effective at reducing the downstream deadstock of finished goods through better demand matching, simultaneously creates an incentive for the firm to procure a larger quantity of upstream raw material. This outcome is driven by the increased option value of the inputs; since raw material can be flexibly converted into products after demand materializes, holding more of it becomes economically attractive. This implication raises the concern that quick response may paradoxically increase total system waste and shift the environmental burden to the raw material purchase stage.

While prior studies provide insightful analyses of the environmental impact of quick response systems, the scope of existing analytical frameworks leaves two critical challenges unresolved.
The first challenge stems from the reliance on strong distributional assumptions. Specifically, derived optimal policies in the literature are contingent upon the assumption of a known, uniform demand distribution. While this assumption is essential for analytical tractability in their study, its applicability may be limited in practice where real-world demand is often ambiguous, and the true distribution is rarely known to decision-makers. Notably, this challenge is exacerbated for manufacturing sectors characterized by short product lifecycles and high volatility. For instance, in industries ranging from consumer electronics to seasonal goods, market trends change rapidly, and new products are frequently introduced. Consequently, gathering extensive historical demand data is often impractical and economically unfeasible. This highlights a clear need for a solution scheme that is robust to distributional ambiguity when only a small number of observations are available.

The second challenge emerges from the inherent operational trade-off: a profit-maximizing firm utilizing recourse flexibility may paradoxically increase its total upstream procurement, suggesting that simply adopting quick response is insufficient for achieving sustainability objectives \citep{quick_res}. Consequently, there is a critical need to integrate explicit environmental constraints into the decision-making process. A rigorous approach to achieve this is by imposing a constraint on the waste-to-consumption (WTC) ratio, which measures the deadstock generated per unit of fulfilled demand. However, embedding such a constraint within a stochastic or robust framework presents a significant methodological hurdle. The WTC ratio is a nonlinear and fractional function, involving expectations in both the numerator and the denominator, which leads to a nonconvex optimization problem. To the best of our knowledge, due to this analytical intractability, the WTC ratio has primarily served as an ex-post evaluation metric in the literature rather than an ex-ante decision constraint. Bridging this gap is crucial: it enables firms to leverage the flexibility of quick response while guaranteeing their total waste remains within a verifiable threshold.

Dedicated to addressing these core challenges, the main contributions of our paper can be summarized as follows:
\begin{itemize}
    \item We develop a distributionally robust optimization (DRO) framework for the quick response problem that operates under demand uncertainty. As a foundational step, we first prove the global concavity of the profit function, a key structural property that ensures computational tractability for general demand distributions. Building upon this, we study the DRO problem under two types of ambiguity sets.  Assuming that limited moment information is known, we analyze a mean-absolute deviation (MAD) ambiguity set and derive the optimal policy in a closed-form, threshold-based structure, which provides rich managerial insights. For the more general, data-driven case where only historical samples are available, we then investigate a Wasserstein ambiguity set and provide its tractable reformulation as a second-order cone program (SOCP), offering a practical and asymptotically consistent approach.
    \item We propose a novel methodology approach to integrate explicit environmental constraints into the robust framework. Specifically, we address the challenge of incorporating the WTC ratio, a semi-infinite, fractional, and nonconvex constraint involving expectations in both the numerator and denominator. We prove that this challenging constraint can be equivalently reformulated as a single, convex worst-case expectation constraint. Based on this fundamental theoretical result, we show that for both the MAD and Wasserstein ambiguity sets, the WTC-ratio-constrained DRO problem remains computationally tractable, admitting a linear program and an SOCP formulation, respectively. This contribution provides a generalizable mechanism for firms to leverage recourse flexibility while adhering to verifiable environmental performance guarantees.
    \item Our numerical study yields two critical managerial insights. First, we quantify the ``price of model misspecification", showing that optimal policies derived under specific distributional assumptions suffer severe performance degradation under small distributional shifts. In contrast, our data-driven DRO approach demonstrates remarkable robustness across a wide range of complex demand scenarios. Second, our study resolves the `quick response or not' debate by showing that the question is not \emph{whether} to use quick response, but \emph{how} to manage it. A responsibly managed quick response system can achieve a `win-win' outcome, delivering higher profitability with a verifiably lower environmental impact than a traditional non-flexible system.
\end{itemize}

\subsection{Literature Review}
Our work is situated within the broad literature on operations management under uncertainty~\citep{fisher2001introduction,caro2020future}, a field dedicated to addressing the fundamental challenge of matching supply or production with uncertain demand. The cornerstone of this field can be traced to the newsvendor model~\citep{arrow1951optimal},  which elegantly captures the single-period trade-off between the cost of overstocking and understocking.  This foundational model has been extended and successfully applied in a vast array of real-world problems, including inventory control~\citep{dana2001note,bolton2008learning,zhao2008newsvendors}, healthcare operations~\citep{olivares2008structural,he2012timing}, energy commitment~\citep{kim2011optimal,srivastava2021data}, and airline revenue management~\citep {dalalah2022pricing}. 

While these applications highlight the pervasiveness of the single-stage operations problem, many industries, particularly those with high demand volatility and overproduction risks like fashion, have sought strategies that provide flexibility through multiple decision points. This has led to a significant stream of research focused on postponement: instead of making a single, binding commitment, firms can strategically delay certain actions until more information is available. A particularly influential implementation of this principle is quick response. This strategy involves a two-stage process where an initial production decision is made before selling, followed by a second, reactive production decision after observing demand signals. The seminal work of~\cite{fisher1996reducing} developed a model for optimizing the two production runs and demonstrates that quick response could significantly increase profits. This principle has then been explored in various contexts. For instance,~\cite{caro2010inventory} modeled the inventory management challenges in a fast-fashion network inspired by Zara, highlighting the complexities of managing frequent product introductions and short lifecycles. \cite{cachon2011value} further enriched this perspective by integrating quick response with other strategic levers like enhanced product design and pricing, and analyzing its interaction with strategic consumer behavior. The benefits of flexible production capacity have also been studied in settings with multiple production stages~\citep{eynan2012design} and in competitive environments~\citep{van1999price}.

However, a common thread in these studies is a primary focus on the profit implications of the quick response system, quantifying how it can improve the firm's financial performance. A recent work of~\cite{quick_res} has fundamentally shifted this perspective. In contrast to the prior literature, they explicitly modeled and analyzed the deadstock, including unused raw material and unsold finished goods, generated by such production systems. The results have identified a critical operational paradox: contrary to the intuition that flexibility reduces waste, a profit-maximizing firm implementing recourse strategies may be incentivized to over-procure upstream raw material, potentially increasing total system waste. Nevertheless, similar to earlier profit-focused research, their analyses typically rely on the assumption that the probability distribution of demand is perfectly known. Our paper departs from this assumption by addressing the methodology challenge of making environmentally and operationally robust quick response decisions when the demand distribution is ambiguous.

Our paper also belongs to the emerging literature on socially responsible operations management~\citep{netessine2022om}.  Over the past decade, there has been a stream of research in sustainable operations, moving beyond traditional efficiency- and profit-centric models to incorporate environmental and social performance criteria~\citep{atasu2020sustainable}. Within this broad field, a particularly active stream of research re-examines classical operational problems through a ``green" lens across diverse industries. For example, in the energy system, researchers have modeled the integration of renewable sources into the power grid and designed optimal investment strategies for technologies like battery storage, which are crucial for decarbonization~\citep{kim2011optimal,aflaki2017strategic}. In agriculture, studies have focused on designing contracts and sourcing channels that promote sustainable farming practices and improve farmer welfare in developing countries~\citep{de2019designing,liao2019information}. In healthcare, operational models have been developed to improve the equity and fairness of resource allocation~\citep{mccoy2014using,qi2017mitigating}. This body of work also includes investigations into new business models, such as leasing versus selling~\citep{agrawal2012leasing}, and the sharing economy~\citep{bellos2017car}.

Our work contributes to this stream by focusing on waste generation stemming from supply-demand mismatches, an issue with profound environmental implications. This problem has recently gained traction, with studies analyzing issues like food waste in grocery supply chains~\citep{akkacs2022shipment,belavina2021grocery}.  Our research is closely aligned with studies that scrutinize the environmental impact of on-demand production and other flexibility-enhancing strategies. For instance,~\cite{alptekinouglu2022adopting} found that mass-customization, while seemingly a perfect tool for waste reduction, can paradoxically increase leftover inventory. This finding resonates strongly with the ``waste-shifting" paradox identified by~\cite{quick_res} for quick response systems. These findings collectively suggest that while operational mechanisms like mass customization and quick response enhance supply chain flexibility, they do not necessarily guarantee a reduction in total system waste. The prevalence of such paradoxes demonstrates the need to move beyond implicit waste consideration towards more explicit and granular control mechanisms, thereby motivating our introduction of the waste-to-consumption ratio constraint.

The vast majority of the aforementioned literature rests on the critical assumption that the firm has access to a precise, known probability distribution of demand. Unfortunately, this strong distributional assumption is rarely satisfied in practice.  Therefore, a significant stream of research has focused on relaxing this requirement, seeking to develop decision models that rely on weaker assumptions or are entirely data-driven. The intellectual roots of this endeavor can be traced back to the seminal work of \citep{scarf1958minmax}, who proposed a Min-Max solution for ordering when only the mean and variance of demand are known. Subsequent research has been done to develop bounds for general recourse problems and stochastic programs with limited distributional information \citep{birge1987computing,Kall1988,birge1991bounding}. \cite{gallego1993distribution} extended the model to cases involving recourse, fixed ordering costs, and multiple items.

The foundational idea of~\cite{scarf1958minmax} has also evolved into the modern field of DRO.  The DRO paradigm provides a powerful framework for decision-making under uncertainty. Recently, it has demonstrated its superiority in a wide range of real-world management problems, including facility location~\citep{wang2020distributionally,saif2021data}, vehicle routing~\citep{zhang2021robust,ghosal2020distributionally}, portfolio optimization \citep{delage2010distributionally,hall2015managing,blanchet2022distributionally,nguyen2024robustifying}, personalized pricing \citep{elmachtoub2021value,chen2022distribution}, and scheduling \citep{kong2013scheduling, mak2014appointment}. 

Unlike traditional stochastic optimization, which assumes a known distribution, the DRO approach optimizes against the worst-case distribution from within a prescribed ambiguity, a collection of plausible probability distributions that are consistent with partial information or historical observations~\citep{kuhn2025distributionally}. The construction of the ambiguity set is central to the DRO paradigm and typically follows two main approaches. The first, in the spirit of Scarf's work, is to define moment-based ambiguity sets. These sets contain all distributions that are consistent with known moment information, such as mean and covariance~\citep{popescu2007robust} or mean and MAD~\citep{wiesemann2014distributionally,postek2018robust}. These ambiguity sets are often adopted for their tractability and the nice managerial interpretation behind the solution. The second, and increasingly prominent, approach is the data-driven paradigm, which constructs ambiguity sets directly from historical data. Among these, the Wasserstein ambiguity set has become particularly popular due to its strong theoretical properties, including asymptotic consistency and its ability to provide finite-sample performance guarantees~\citep{mohajerin2018data,blanchet2019quantifying,gao2023distributionally}. We will visit both types of ambiguity set in this paper. To our knowledge, we are the first to leverage the modern DRO methodologies to address the quick response problem. By doing so, we bridge the literature on operational flexibility and sustainable operations with state-of-the-art techniques for decision-making under uncertainty, providing environmentally friendly policies that are robust to the demand uncertainty in real-world environments.

\subsection{Paper structure and Notations}
The remainder of this paper is organized as follows. In Section~\ref{sec:prob_statement}, we formally introduce the quick response production planning model and establish the global concavity of its profit function, a key structural property fundamental to the tractability of the subsequent optimization problems. In Section~\ref{sec:DRO}, we develop our distributionally robust quick response models under both moment-based and Wasserstein ambiguity sets and derive their respective tractable reformulations. Section~\ref{sec:WTC-constr_theory} extends this robust framework by incorporating an explicit constraint on the WTC ratio. Finally, in Section~\ref{sec:experiment}, we present a comprehensive numerical study to validate our theoretical results, quantify the benefits of the robust approach over existing benchmarks, and derive prescriptive managerial insights. Section~\ref{sec:concl} concludes the paper.

\noindent\textbf{Notations}: All random variables are denoted by capital letters (e.g., $Y$), while their realizations are written in lowercase without tildes (e.g., $y$). For a measurable function $f : \mathbb{R} \rightarrow \mathbb{R}$ and a random variable $Y$, we denote by $\mathbb{E}_{\mathbb{P}}[f(Y)]$ the expectation of $f(Y)$ under the probability measure $\mathbb{P}$ governing the distribution of $Y$. The set of all probability measures supported on a set $\Xi$ is written as $\mathcal{P}_0(\Xi) := \{ \mu \in \mathcal{M}_+ : \int_\Xi \mu(\mathrm d\xi) = 1 \}$, where $\mathcal{M}_+$ denotes the cone of nonnegative Borel measures.  For any $x \in \mathbb{R}$, we define $(x)^+ := \max\{x, 0\}$. Vectors are written in boldface (e.g., $\bm{x}$), and $\|\bm{x}\|$ denotes the Euclidean norm of $\bm{x}$. 

\section{Production Planning with Quick Response Flexibility}
\label{sec:prob_statement}

In this paper, we examine a production planning problem with quick-response flexibility from a \emph{distributionally robust optimization} (DRO) perspective. Traditionally, such systems have been analyzed under the assumption that demand follows a known, parametric distribution (e.g., the uniform distribution assumed in \citet{quick_res}). While this assumption enables closed-form analysis and analytical insights, it is often unrealistic in manufacturing practice and may result in poor performance under out-of-distribution scenarios when the true demand distribution deviates from uniformity. To address this limitation, we extend the analysis to a DRO framework that relaxes the assumption of known demand distribution. Instead, we seek decisions that perform well under the \emph{worst-case distribution} within an ambiguity set consistent with partial moment information or historical data.

Before presenting the DRO model, we first describe the underlying production setting. The decision process unfolds in two stages. In the \emph{first stage}, the firm procures $x$ units of raw material and produces $q$ units of finished goods, with each unit requiring one unit of raw material. The cost structure consists of $c_m$ per unit of raw material and $c$ per unit of production, while the selling price per unit is $p$. Feasibility requires that the initial production does not exceed available raw material, i.e., $q \leq x$, and profitability necessitates that $p > c + c_m$.

In the \emph{second stage}, after demand is realized, the firm may produce additional units $q_\delta$ to satisfy excess demand, subject to the constraint $q + q_\delta \leq x$ to ensure material sufficiency. These recourse units are produced at a higher marginal cost $c_\delta = c + \delta$, where $\delta \geq 0$ captures the premium associated with expedited production (e.g., overtime labor or rush logistics). We assume $p > c + \delta + c_m$, so quick response production remains profitable. If demand is less than $q$, the surplus is discarded at no salvage value, resulting in avoidable waste.

This model captures the central trade-off between low-cost advanced production and high-cost reactive production. We let $Y$ denote the market size \emph{or market demand}, modeled as a random variable supported on $[\yl, \yu]$. However, we do not assume the distribution $\PP$ governing $Y$ is known. Instead, we consider a DRO model where $\PP$ is only partially known—either through moment conditions or empirical samples. The price $p$ is treated as exogenous, consistent with standard practice in the operations management literature \citep{porteus1990stochastic}.  Each consumer has a valuation $\theta$ drawn uniformly from $[0,1]$ and will purchase if $\theta \geq p$, which leads to a realized demand function of the form $d_Y = (1 - p)  Y$.

The problem setup yields the following profit function: 
\begin{equation}
\label{eq:profit_function}
 \Pi(x, q, y) \coloneqq p \,  \min \left\{ d_y, q + q_\delta(x, q, y) \right\}
- c_m x - c q - (c + \delta) \, \, q_\delta(x, q, y),
\end{equation}
where the firm’s optimal production decision in stage 2 is given by:
\begin{equation}
\label{q_delta_explic}
q_\delta(x, q, y) \coloneqq \min \left\{ (d_y - q)^+, x - q \right\}.
\end{equation}
Despite its seemingly intricate structure, one can establish that this function is concave in its arguments, which facilitates the solution approach developed in later sections. 
\begin{figure}[h!]
    \centering
    \includegraphics[width=0.6\textwidth]{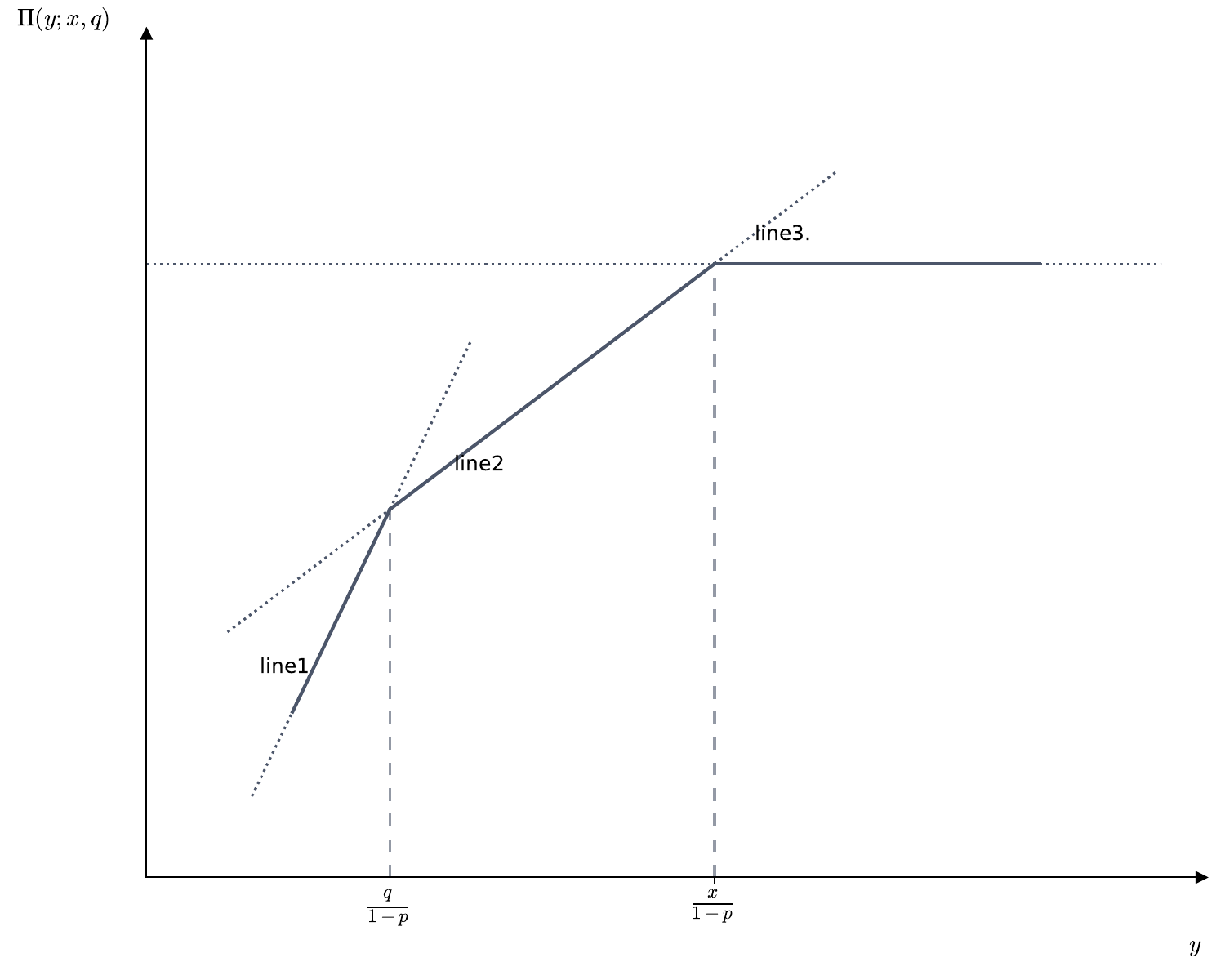}
    \caption{A visualization of the concave profit function $\Pi(x,q,y)$. Note that $d_y$ denotes the realized effective demand given by $d_y = (1-p)y$, where $y$ is a realization of the random demand $Y$.}
    \label{fig:lines_slopes}
\end{figure}

\begin{proposition}
\label{prop:piecewise_affine}
The profit function~\eqref{eq:profit_function} is a concave, piecewise affine function of $(x,q,y)$, given by
\[
\Pi(x, q, y) =
\begin{cases}
 p   d_y
- c_m x - c q  & \text{if } d_y < q,\\
(p-(c+\delta))    d_y
- c_m x +\delta q   & \text{if } q \leq d_y \leq x, \\
 (p-(c+\delta) -c_m)   x+\delta q & \text{if }  x < d_y,
\end{cases}
\]
which admits the pointwise minimum representation:
\begin{equation}
\label{eq:piecewise_affine}
\begin{array}{rl}
&\displaystyle\Pi(x, q, y)  
\displaystyle=\min\left\{p   d_y
- c_m x - c q  ,(p-(c+\delta))    d_y
- c_m x +\delta q,(p-(c+\delta) -c_m)   x
 +\delta q\right\}.
 \end{array}
\end{equation}
\end{proposition}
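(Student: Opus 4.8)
The plan is to prove the pointwise-minimum representation~\eqref{eq:piecewise_affine} directly; the three-branch formula, concavity, and piecewise affineness then follow as immediate corollaries. Throughout, write $A_1(x,q,y) := p\,d_y - c_m x - cq$, $A_2(x,q,y) := (p-(c+\delta))\,d_y - c_m x + \delta q$, and $A_3(x,q,y) := (p-(c+\delta)-c_m)\,x + \delta q$ for the three affine functions appearing inside the minimum, recalling that $d_y = (1-p)y$ is itself affine in $y$.

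First I would evaluate $\Pi$ explicitly by splitting into cases according to the position of $d_y$ relative to the breakpoints $q$ and $x$, using throughout the feasibility constraint $q \le x$ (so that $x-q\ge 0$). \textbf{(i)} If $d_y < q$, then $(d_y-q)^+ = 0$ and hence $q_\delta = \min\{0,\,x-q\} = 0$, so that $\min\{d_y,\,q+q_\delta\} = d_y$; substituting into~\eqref{eq:profit_function} gives $\Pi = A_1$. \textbf{(ii)} If $q \le d_y \le x$, then $(d_y-q)^+ = d_y - q \le x - q$, hence $q_\delta = d_y - q$, $q+q_\delta = d_y$, and collecting the $q$-terms yields $\Pi = A_2$. \textbf{(iii)} If $x < d_y$, then $x - q < d_y - q = (d_y-q)^+$, hence $q_\delta = x - q$, $q+q_\delta = x < d_y$, and substitution yields $\Pi = A_3$. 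This establishes the three-branch formula in the statement.

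Next I would show that on each region the active branch is in fact the smallest of $A_1,A_2,A_3$. The key observation is that the pairwise differences factor cleanly: $A_1 - A_2 = (c+\delta)(d_y - q)$ and $A_2 - A_3 = (p-(c+\delta))(d_y - x)$. Since $c+\delta > 0$ and $p - (c+\delta) > 0$ (the latter from the standing assumption $p > c+\delta+c_m$), the sign of $A_1 - A_2$ matches that of $d_y - q$ and the sign of $A_2 - A_3$ matches that of $d_y - x$. Thus in region (i), $d_y < q \le x$ gives $A_1 \le A_2 \le A_3$; in region (ii), $q \le d_y$ gives $A_2 \le A_1$ and $d_y \le x$ gives $A_2 \le A_3$; in region (iii), $q \le x < d_y$ gives $A_3 \le A_2 \le A_1$. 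In each case the minimum equals the branch identified above, which proves~\eqref{eq:piecewise_affine}. Finally, since each $A_i$ is affine in $(x,q,y)$, their pointwise minimum is concave and piecewise affine, which gives the remaining claims.

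I do not expect a genuine obstacle here, as the computations are elementary; the one place needing care is the region boundaries ($d_y = q$ and $d_y = x$), where one must check that neighboring branches agree so that $\Pi$ is well-defined and continuous, and keeping explicit track of which hypotheses ($q \le x$ for $q_\delta \ge 0$ and the ordering, and $c+\delta>0$, $p>c+\delta$ for signing the factored differences) are invoked at each step. Both issues are in fact resolved automatically by the minimum representation: continuity across boundaries is inherited from the $\min$ of continuous functions, so it suffices to verify the representation itself as above.
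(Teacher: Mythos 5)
Your proposal is correct. The first half coincides with the paper's proof: both evaluate $q_\delta$ and the fulfilled demand $\min\{d_y,q+q_\delta\}$ by the same three-case analysis (using $x\ge q$) and substitute into~\eqref{eq:profit_function} to obtain the three-branch formula. Where you diverge is in how the pointwise-minimum representation and concavity are obtained, and the logical order is reversed. The paper fixes $(x,q)$, notes continuity at the breakpoints $d_y=q$ and $d_y=x$, observes that the slopes in $d_y$ are $p$, $p-(c+\delta)$, $0$ and hence non-increasing, concludes concavity in $d_y$, and from concavity infers that $\Pi$ equals the minimum of its affine pieces; joint concavity in $(x,q,y)$ then follows because each piece is affine in all arguments. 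You instead verify the minimum representation directly by factoring the pairwise differences, $A_1-A_2=(c+\delta)(d_y-q)$ and $A_2-A_3=(p-(c+\delta))(d_y-x)$ (both factorizations check out), and signing them via $c+\delta>0$ and $p>c+\delta+c_m$; concavity and piecewise affineness then drop out as the minimum of affine functions. Your route buys two small things: it does not invoke the (standard but unproven in the paper) fact that a continuous piecewise affine function with non-increasing slopes equals the pointwise minimum of its pieces, and it delivers joint concavity in $(x,q,y)$ in one step rather than via concavity in $d_y$ alone. What the paper's route buys is the explicit non-increasing marginal-revenue structure $p\ge p-(c+\delta)\ge 0$, which carries an economic interpretation of the three regimes. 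You also track exactly which standing assumptions are used where, which the paper leaves implicit; no gap in either argument.
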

Each expression in \eqref{eq:piecewise_affine} corresponds to a distinct demand realization scenario:

\begin{itemize}
    \item {$d_y < q$}: Demand is fully met using the initial inventory, with no quick response used. The profit is \( p  d_y - c_m x - c q \).
    
    \item {$q \leq d_y \leq x$}: Initial production covers part of the demand, and the remaining demand \( d_y - q \) is met through quick response. The profit simplifies to:
    \[
    (p-(c+\delta))    d_y
- c_m x +\delta q=(p - c)  d_y - c_m x - \delta (d_y - q).
    \]
    
    \item {$d_y > x$}: The firm cannot meet demand beyond its available capacity. It utilizes all raw material \(x\), and the quick response contribution is limited by capacity availability. The profit becomes:
    \[
    (p - c - c_m)  x + \delta (x - q).
    \]
\end{itemize}

To our knowledge, the concave piecewise affine property of the profit function has not been established in the literature.  This structure of the profit function enables tractable solution approaches under uncertainty. In particular, it facilitates sample-average approximation methods and distributionally robust formulations that can handle general demand distributions beyond the uniform case considered in existing literature. 

\section{Distributionally Robust Optimization Frameworks}\label{sec:DRO}
In this paper, we adopt the DRO framework to address the {quick response} production planning problem under demand uncertainty, without making any specific assumptions on the demand distribution. The goal is to determine a production plan that maximizes the \emph{worst-case expected profit}, as follows:
\begin{equation}
\label{eq:DRO}
\begin{array}{rl}
    \displaystyle \max  \;\;\; &\displaystyle  \min_{\Q\in\mP} \EE_{\Q}\left[\Pi(x,q,d_Y)\right]
    \\
    \st\;\;& \displaystyle x,q\in[\yl,\yu]\\ 
    \;\;\; & \displaystyle x\geq q.
\end{array}    
\end{equation}
 Here, the inner minimization is taken over all probability distributions $\Q$ of the random variable $Y$ that lie within the ambiguity set $\mP$. The resulting \emph{distributionally robust quick response problem}~\eqref{eq:DRO} yields a conservative yet reliable production strategy that performs well even under adverse demand distributions.
 
We consider two types of ambiguity sets: moment-based, using the mean and mean-absolute deviation (MAD) information, and distance-based, using the Wasserstein metric. These two formulations offer complementary strengths: moment-based sets offer interpretability and tractability, while Wasserstein-based sets are grounded in data and enjoy strong out-of-sample performance guarantees.

\subsection{Mean-MAD DRO Model }
In this section, we develop an analytical solution for the distributionally robust quick response problem using the popular mean-MAD ambiguity set \citep{wiesemann2014distributionally,postek2018robust}, defined as: 
\begin{equation}
\label{eq:mean_MAD_ambiguity}
\mathcal P \coloneqq \left\{\Q\in\mP_0([\yl,\yu]):\EE_\Q[Y]=\mu,\,\EE_\Q[|Y-\mu|]= \sigma \right\}. 
\end{equation}
This ambiguity set contains all probability distributions $\Q$ supported on $[\yl, \yu]$ that share the same mean $\mu$ and MAD $\sigma$. 
To this end, we derive a simple threshold-based decision rule that yields interpretable and practical guidance.

We begin by showing that the DRO problem reduces to a finite discrete optimization problem.

\begin{proposition}
\label{prop:DRO_mean_MAD_equiv}
Let $\PP^\star$ be the extremal distribution that achieves the worst-case expectation in \eqref{eq:DRO} supported on $\yl$, $\mu$, and $\yu$, with corresponding probabilities:
\begin{equation}
\label{eq:three_probs}
\begin{array}{ll}
&\displaystyle w_\yl\coloneqq\mathbb P^\star(Y=\yl)=\frac{\sigma}{2(\mu-\yl)},\\
&\displaystyle w_\mu\coloneqq\mathbb P^\star(Y=\mu)=1-\frac{\sigma}{2(\mu-\yl)}-\frac{\sigma}{2(\yu-\mu)},\\
&\displaystyle w_\yu\coloneqq\mathbb P^\star(Y=\yu)=\frac{\sigma}{2(\yu-\mu)}.

\end{array}
\end{equation}

Then, the DRO problem \eqref{eq:DRO} under the mean-MAD ambiguity set \eqref{eq:mean_MAD_ambiguity} is equivalent to the optimization problem:
\begin{equation}
\label{eq:mean-MAD}
\begin{array}{rl}
\max&\displaystyle \;\;w_\yl(pd_\yl- c_m x - c q )+w_\mu\Pi(x, q, \mu)+w_\yu((p-(c+\delta) -c_m)   x+\delta q)\\[1mm]
\st& \displaystyle\;\;x,q\in\{d_\yl,d_\mu,d_\yu\}\\[1mm]
& \displaystyle\;\;x\geq q,
\end{array}
\end{equation}
where $d_\yl=(1-p)\yl$, $d_\mu=(1-p)\mu$, and $d_\yu=(1-p)\yu$. 
\end{proposition}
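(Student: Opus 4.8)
The plan is to split the argument into an inner and an outer part: first pin down the worst-case distribution explicitly and show it does not depend on $(x,q)$, and then use the piecewise-affine structure to collapse the outer maximization over $(x,q)$ to a finite search.

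\textbf{Step 1 (worst-case distribution).} For every fixed $(x,q)$ the map $y\mapsto\Pi(x,q,d_y)$ is concave on $[\yl,\yu]$, since $\Pi$ is jointly concave by Proposition~\ref{prop:piecewise_affine} and $d_y=(1-p)y$ is affine. Hence the inner problem $\min_{\Q\in\mP}\EE_\Q[\Pi(x,q,d_Y)]$ is the minimization of the expectation of a fixed concave function over the mean-MAD ambiguity set \eqref{eq:mean_MAD_ambiguity}, for which I would invoke the classical tight bound of Ben-Tal and Hochman for mean-MAD moment problems: among all distributions on $[\yl,\yu]$ with mean $\mu$ and mean absolute deviation $\sigma$, the expectation of a concave function is minimized by the three-point law on $\{\yl,\mu,\yu\}$ with weights $w_\yl=\sigma/(2(\mu-\yl))$, $w_\yu=\sigma/(2(\yu-\mu))$, and $w_\mu=1-w_\yl-w_\yu$. (If a self-contained argument is preferred, the same conclusion follows from linear-programming duality for the generalized moment problem: the largest convex minorant of the form $y\mapsto\alpha+\beta y+\gamma|y-\mu|$ can meet the concave function $\Pi(x,q,d_\cdot)$ only at its kinks $\yl,\mu,\yu$, so complementary slackness forces all mass of the optimal $\Q$ onto those three points.) I would then verify that $\PP^\star$ in \eqref{eq:three_probs} is feasible---$\sum_y w_y\,y=\mu$ and $\sum_y w_y|y-\mu|=\sigma$ are one-line identities, and $w_\yl,w_\mu,w_\yu\ge0$ holds exactly under the standing nonemptiness condition $\sigma\le 2(\mu-\yl)(\yu-\mu)/(\yu-\yl)$---so the bound is attained. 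Since this is valid for \emph{every} $(x,q)$, we get the pointwise identity $\min_{\Q\in\mP}\EE_\Q[\Pi(x,q,d_Y)]=w_\yl\Pi(x,q,\yl)+w_\mu\Pi(x,q,\mu)+w_\yu\Pi(x,q,\yu)$, so \eqref{eq:DRO} reduces to maximizing this concave function over $\{x,q\in[\yl,\yu]:x\ge q\}$.

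\textbf{Step 2 (finite reduction over $(x,q)$).} Denote the reduced objective by $G(x,q)$. By Proposition~\ref{prop:piecewise_affine} each of its three summands is concave and piecewise affine in $(x,q)$, non-differentiable only across the lines $\{q=d_y\}$ (the piece-1/piece-2 interface) and $\{x=d_y\}$ (the piece-2/piece-3 interface), where the two adjacent affine expressions in fact coincide; hence $G$ is concave and affine on each cell of the rectangular grid cut out by $x\in\{d_\yl,d_\mu,d_\yu\}$ and $q\in\{d_\yl,d_\mu,d_\yu\}$. A concave piecewise-affine function attains its maximum over a polytope at a vertex of its polyhedral subdivision, so it suffices to show an optimal such vertex can be taken with $x,q\in\{d_\yl,d_\mu,d_\yu\}$: the only other vertices lie on the box boundary and are removed by monotonicity---for $x\le d_\yl$ all three terms are in piece 3 and $G$ has slope $p-(c+\delta)-c_m>0$ in $x$, for $x\ge d_\yu$ all are in piece 1/2 and $G$ has slope $-c_m\le 0$ in $x$, with the analogous one-dimensional argument confining $q$ to $[d_\yl,\min\{d_\yu,x\}]$. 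Finally, at any such candidate point $q\ge d_\yl$ places $\Pi(x,q,\yl)$ on its piece-1 expression $pd_\yl-c_m x-cq$ and $x\le d_\yu$ places $\Pi(x,q,\yu)$ on its piece-3 expression $(p-(c+\delta)-c_m)x+\delta q$ (the two representations agree on the interface, so the equalities $q=d_\yl$ or $x=d_\yu$ are harmless), which is precisely the form \eqref{eq:mean-MAD}.

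\textbf{Main obstacle.} Step 1 is essentially a citation together with a routine feasibility check, so the real work is in Step 2: correctly identifying which affine piece of $\Pi$ is active on each grid cell, ruling out boundary optima via the slope computations above, and reconciling the feasible box $[\yl,\yu]$ with the grid $\{d_\yl,d_\mu,d_\yu\}$ (which is clean under the natural normalization $\yl=0$, so that $d_\yl$ is itself feasible and is a grid line). This bookkeeping, rather than any deep inequality, is where care will be needed.
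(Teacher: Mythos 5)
Your proof is correct and follows essentially the same route as the paper: the inner minimization is resolved by the Ben-Tal--Hochman three-point distribution on $\{\yl,\mu,\yu\}$ (the paper simply cites Theorem 3 of that reference), and the outer maximization is collapsed to the finite grid by exploiting the piecewise-affine structure of $\Pi$. Your Step 2 bookkeeping (affinity on grid cells, vertex attainment, and slope-sign arguments to push $(x,q)$ from the box $[\yl,\yu]^2$ into $[d_\yl,d_\yu]^2$) is, if anything, more careful than the paper's, which case-splits on the position of $d_\mu$ relative to $(q,x)$ and lists extreme points while implicitly assuming $d_\yl\le q\le x\le d_\yu$.
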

This result shows that the DRO problem is equivalent to maximizing the expected profit under a three-point distribution consisting of low, medium, and high demand scenarios. In this setting, the feasible set reduces to six possible $(x, q)$ pairs, making the problem tractable by enumeration.

We next present a closed-form characterization of the optimal decision, expressed as interpretable threshold-based rules involving model parameters.

\begin{theorem}
\label{thm:closed_form} 
The optimal solution of the distributionally robust quick response problem under the mean-MAD ambiguity set is given as follows:
\begin{enumerate}
\item  If {$w_\yu\delta\geq(w_\yl+w_\mu) c$}, then:
\begin{enumerate}
\item If $c_m+c\leq w_\yl p$, then $(x^\star,q^\star)=(d_\yu,d_\yu)$.
\item If $w_\yl p < c_m + c\leq (w_\mu+w_\yu)p$ then $(x^\star,q^\star)=(d_\mu,d_\mu)$.
\item If  $ (w_\mu+w_\yu)p< c_m + c$, then $(x^\star,q^\star)=(d_\yl,d_\yl)$. 
\end{enumerate}
\item  If {$w_\yu\delta<(w_\yl+w_\mu) c$, and $(w_\mu+w_\yu)\delta\geq w_\yl c$}: 
\begin{enumerate}
\item If $c_m\leq w_\yu(p-c-\delta)$, then $(x^\star,q^\star)=(d_\yu,d_\mu)$. 
\item If $ w_\yu(p-c-\delta)<c_m$ and $c_m + c\leq (w_\mu+w_\yu)p$, then  $(x^\star,q^\star)=(d_\mu,d_\mu)$. 
\item If $(w_\mu+w_\yu)p<c_m + c$, then $(x^\star,q^\star)=(d_\yl,d_\yl)$. 
\end{enumerate}

\item If $(w_\mu+w_\yu)\delta< w_\yl c$ then: 
\begin{enumerate}
\item If $c_m\leq w_\yu(p-c-\delta)$, then $(x^\star,q^\star)=(d_\yu,d_\yl)$. 
\item If $ w_\yu(p-c-\delta)<c_m\leq (w_\mu+w_\yu)(p-c-\delta)$, then  $(x^\star,q^\star)=(d_\mu,d_\yl)$. 
\item If $(w_\mu+w_\yu)(p-c-\delta)<c_m$, then $(x^\star,q^\star)=(d_\yl,d_\yl)$. 
\end{enumerate}
\end{enumerate}
\end{theorem}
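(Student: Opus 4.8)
The plan is to solve the finite optimization problem~\eqref{eq:mean-MAD} established in Proposition~\ref{prop:DRO_mean_MAD_equiv} by brute-force comparison of the six candidate $(x,q)$ pairs, but organized so that the answer emerges as a nested case split on the cost parameters rather than an opaque table lookup. First I would substitute the three-point distribution into the objective. Writing the objective as $g(x,q)$, each of the six feasible pairs gives an affine-in-parameters expression: e.g.\ $g(d_\yu,d_\yu)$, $g(d_\mu,d_\mu)$, $g(d_\yl,d_\yl)$ on the ``diagonal'' where all fabric is converted up front, and $g(d_\yu,d_\mu)$, $g(d_\yu,d_\yl)$, $g(d_\mu,d_\yl)$ where extra fabric is held for quick response. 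Using the three affine pieces of $\Pi$ from Proposition~\ref{prop:piecewise_affine} (being careful which piece is active at $Y=\mu$ depending on whether $d_\mu \lessgtr q$), I would reduce each $g$ to a closed-form linear combination of $p$, $c$, $c_m$, $\delta$ with coefficients built from $w_\yl,w_\mu,w_\yu$ and $d_\yl,d_\mu,d_\yu$.

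The second step is to separate the choice of $q$ (given $x$) from the choice of $x$. For fixed $x$, the dependence of $g(x,q)$ on $q$ is affine, so the optimal $q$ is at an endpoint of $\{d_\yl,\dots\}\cap[\,\cdot,x]$; the sign of the $q$-coefficient is exactly a comparison of a weighted quick-response margin $\delta$ against a weighted first-stage cost $c$. This is where the three top-level cases come from: $w_\yu\delta \ge (w_\yl+w_\mu)c$ makes it optimal to set $q=x$ (produce everything early); the intermediate regime $(w_\mu+w_\yu)\delta \ge w_\yl c > \tfrac{w_\yu}{\,\cdot\,}$ makes $q=d_\mu$ optimal; and $(w_\mu+w_\yu)\delta < w_\yl c$ makes $q=d_\yl$ optimal (hold as much fabric as possible for the cheap-option value). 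I would verify these thresholds by directly differencing $g$ across adjacent $q$-values and collecting the coefficient of $q$, which is linear in $c$ and $\delta$ with coefficients $\pm w_\yl, \pm(w_\mu+w_\yu)$, etc.

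With $q$ pinned down in each top-level case, the remaining step is to optimize over $x\in\{d_\yl,d_\mu,d_\yu\}$ (subject to $x\ge q$). Again $g$ is affine in $x$ for fixed $q$, and the marginal value of increasing $x$ is a term of the form (expected upside from serving more demand) minus $c_m$ times a weight; comparing this to $c_m$ yields the inner thresholds such as $c_m \le w_\yu(p-c-\delta)$, $c_m+c \le (w_\mu+w_\yu)p$, and $c_m \le (w_\mu+w_\yu)(p-c-\delta)$. Concretely, going from $x=d_\mu$ to $x=d_\yu$ changes the objective only through the $w_\yu$-scenario (and possibly the $w_\mu$-scenario, depending on the case), so the increment is a single clean linear inequality in $c_m$; similarly for $d_\yl \to d_\mu$. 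Assembling the $q$-decision with the $x$-decision in each branch produces exactly the nine sub-cases listed. I would close by checking the boundary/degenerate configurations (e.g.\ when two thresholds coincide, or when $d_\mu$ fails to lie strictly between $q$ and $x$ so a different affine piece of $\Pi$ is active) to confirm the stated solution remains optimal, using ties-broken-arbitrarily since the objective value is continuous across those boundaries.

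The main obstacle I anticipate is bookkeeping discipline rather than any deep difficulty: the profit function $\Pi(x,q,\mu)$ in the $w_\mu$ term switches among three affine pieces according to the position of $d_\mu$ relative to $q$ and $x$, so for each of the six $(x,q)$ pairs one must use the correct piece, and the case hypotheses on $(c,c_m,\delta,p)$ must be shown to be jointly exhaustive and to force a unique maximizer. Keeping the weighted-margin inequalities consistent across the $q$-step and the $x$-step — so that, for instance, the intermediate $q=d_\mu$ regime interacts correctly with the $c_m$ thresholds — is the part most prone to error and the part I would write out most carefully.
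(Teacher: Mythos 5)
Your proposal is correct and follows essentially the same route as the paper: reduce to the six candidate pairs from Proposition~\ref{prop:DRO_mean_MAD_equiv}, determine the best $q$ for each $x$ via the weighted $\delta$-versus-$c$ comparisons (yielding the three top-level regimes), and then compare across $x$ within each regime via the $c_m$ thresholds, using the correct affine piece of $\Pi(x,q,\mu)$ in each case. The only cosmetic difference is that the paper writes out the six expected-profit expressions explicitly and chains the pairwise comparisons (including the implications such as $c_m\leq w_\yu(p-c-\delta)$ forcing $c_m+c\leq(w_\mu+w_\yu)p$) rather than phrasing the $q$-step as a sign condition on a $q$-coefficient, but the substance is identical.
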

The DRO model yields threshold-based decision rules that managers can implement without requiring full distributional knowledge of demand. The closed-form solution provides actionable managerial insights that inform raw material procurement and production decisions under uncertainty:
\begin{enumerate}
\item \emph{No Use of Quick Response:}
When the condition $w_\yu\delta \geq (w_\yl + w_\mu) c$ holds, the expected marginal cost of fulfilling high demand via quick response exceeds the combined expected cost of overproduction under low and medium demand. In this regime, the firm should commit fully to advanced production, with no reliance on quick response. The optimal production level depends on the comparison between expected revenues and production costs: if the expected revenue under low demand exceeds the cost, the firm should produce at full capacity; if the revenue is insufficient even under medium or high demand, a conservative, low-capacity plan is optimal; otherwise, the firm should produce at a moderate level.

\item \emph{Selective Use of Quick Response:} 
In the intermediate regime, where $w_\yu \delta < (w_\yl + w_\mu) c$ but $(w_\mu + w_\yu)\delta \geq w_\yl c$, quick response is cost-effective for high demand but not for moderate demand. The optimal policy balances advanced production with flexibility. When the material cost $c_m$ is sufficiently low relative to the expected marginal quick response profit under high demand, the firm should procure raw material to cover the highest demand level but produce only up to the mean demand in the first stage, using quick response to fulfill additional realized demand. As raw material becomes more expensive, the firm reduces procurement and transitions to more conservative strategies.

\item \emph{Aggressive Use of Quick Response:} 
When $(w_\mu + w_\yu)\delta < w_\yl c$, quick response becomes highly cost-effective, and the optimal strategy minimizes first-stage commitment. The firm produces in advance only for the lowest demand level and relies heavily on quick response to meet higher demand. Raw material procurement depends on the trade-off between its cost and quick response profit margin: if $c_m$ is low, the firm procures raw material sufficient for high demand but defers production to the second stage; as $c_m$ increases, the firm reduces material acquisition and shifts to a more cautious fulfillment strategy.
\end{enumerate}

It is worth highlighting that in the general distributionally robust optimization literature, problems defined over moment-based ambiguity sets are typically reduced to tractable conic programs~\citep{wiesemann2014distributionally}. While computationally efficient, such numerical approaches often obscure the underlying mechanics of the decision rule. In contrast, our analysis transcends standard conic reformulations. By exploiting the extremal distribution of the distributionally robust optimization problem, we derive explicit, closed-form threshold policies. Unlike ``black-box" conic programs, these closed-form expressions provide transparent managerial insights, explicitly revealing how the degree of demand ambiguity (captured by the mean and mean absolute deviation) fundamentally shifts the optimal production strategy.

\subsection{Wasserstein DRO Model} 

In a data-driven setting, we adopt the state-of-the-art Wasserstein DRO framework. Given a dataset $\{d_i\}_{n\in[N]}$ of $N$ historical demand observations, we infer the corresponding market size samples via $y_i=d_i/(1-p)$,  $i\in[N]$, and construct the empirical distribution
\begin{equation*}
\hat\PP\coloneqq \frac{1}{N}\sum_{i\in[N]} \delta_{y_i},
\end{equation*}
which assigns equal probability $\frac{1}{N}$ to each sample point. 
We define the Wasserstein ambiguity set around this empirical distribution as
\begin{equation}
\label{eq:Wasserstein}
\mathcal P_\epsilon\coloneqq \left\{\Q\in\mathcal P_0([\yl,\yu]):\Wass_2\left(\Q,\hat\PP\right)\leq\epsilon\right\},
\end{equation}
where $\Wass_2$ denotes the type-2 Wasserstein distance between probability distributions. The type-$r$ Wasserstein distance between two distributions $\Q_1$ and $\Q_2$ supported on $\mathcal Z$ is defined as
\begin{equation*}
\Wass_{2}(\Q_1, \Q_2) \coloneqq\inf_{\rho \in \Gamma(\Q_1, \Q_2)} \left( \int_{\mathcal Z \times \mathcal Z} \| z_1-  z_2\|^r \, \rho ({\rm d}{ z_1}, {\rm d}{ z_2}) \right)^{\frac{1}{r}},
\end{equation*}
where $\Gamma(\Q_1, \Q_2)$ denotes the set of all couplings of $\Q_1$ and $\Q_2$. Intuitively, this distance quantifies the minimal cost of transporting mass from $\Q_1$ to $\Q_2$, where the cost of moving a unit mass from $z_1$ to $z_2$ is $\|z_1-z_2\|^r$. 

Although the type-1 Wasserstein metric is often preferred due to its tractability, it has been shown to degenerate to the unregularized sample average approximation (SAA) in problems such as the newsvendor model, where the decision variable is not multiplied by the uncertain parameter; see \cite[Remark 6.7]{mohajerin2018data}. To overcome this limitation, we adopt the type-2 Wasserstein distance, which has recently been shown to provide stronger regularization and improve out-of-sample performance \citep{byeon2025comparative}.  

The following result establishes that the type-2 Wasserstein DRO model admits a tractable reformulation as a second-order conic program. This reformulation can be efficiently solved at scale using standard off-the-shelf solvers, such as Gurobi \citep{gurobi} or MOSEK \citep{mosek}. 

\begin{theorem}
\label{thm:Wasserstein}
Using the Wasserstein ambiguity set \eqref{eq:Wasserstein} with $r=2$, the DRO problem  \eqref{eq:DRO} is equivalent to the second-order conic program:
\begin{equation}
\label{eq:SOCP}
\begin{array}{rl}\displaystyle
\max&\displaystyle\;\; -\epsilon^2\lambda+\frac{1}{N}\sum_{i\in[N]}\gamma_i\\
\displaystyle\st &\displaystyle\;\; x,q\in[\yl,\yu],\;\lambda\in\RR_+,\;\bm\gamma\in\RR^N,\;\bm\theta,\bm\eta,\bm\phi,\bm\psi\in\RR_+^N\\
\displaystyle&\displaystyle\;\; x\geq q\\
&\displaystyle\left\|\begin{bmatrix}
p(1-p)-\theta_i+\eta_i-2\lambda y_i\\
\lambda y_i^2- c_m x - c q +\theta_i\yl-\eta_i\yu-\gamma_i-\lambda
\end{bmatrix}\right\| 
 \leq \lambda y_i^2- c_m x - c q +\theta_i\yl-\eta_i\yu-\gamma_i+\lambda\quad\forall i\in[N]\\
 &\; \lambda y_i^2- c_m x - c q +\theta_i\yl-\eta_i\yu-\gamma_i\geq 0\quad\forall i\in[N]\\
&\displaystyle\left\|\begin{bmatrix}
(p-(c+\delta))(1-p)-\phi_i+\psi_i-2\lambda y_i\\
\lambda y_i^2- c_m x +\delta q +\phi_i\yl-\psi_i\yu-\gamma_i-\lambda
\end{bmatrix}\right\| 
 \leq \lambda y_i^2- c_m x +\delta q +\phi_i\yl-\psi_i\yu-\gamma_i+\lambda\quad\forall i\in[N]\\
&\displaystyle\; \lambda y_i^2- c_m x +\delta q +\phi_i\yl-\psi_i\yu-\gamma_i\geq 0 \quad \forall i\in[N]\\
&\displaystyle \gamma_i \leq(p-(c+\delta) -c_m)   x
 +\delta q\qquad\forall i\in[N].
 \end{array}
\end{equation}
\end{theorem}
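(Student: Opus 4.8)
\textbf{Proof proposal for Theorem~\ref{thm:Wasserstein}.}

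The plan is to start from the standard strong-duality result for type-2 Wasserstein DRO over a compact support (e.g.\ the Kantorovich duality argument in \citet{mohajerin2018data} adapted to the squared transport cost). Applying this to the inner minimization in \eqref{eq:DRO}, the worst-case expectation $\min_{\Q\in\mathcal P_\epsilon}\EE_\Q[\Pi(x,q,d_Y)]$ equals
\[
\max_{\lambda\ge 0}\ -\epsilon^2\lambda+\frac1N\sum_{i\in[N]}\inf_{y\in[\yl,\yu]}\Bigl\{\Pi(x,q,d_y)+\lambda\,(y-y_i)^2\Bigr\},
\]
where I have used $d_y=(1-p)y$ and the type-2 cost $\|y-y_i\|^2=(y-y_i)^2$. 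The outer problem \eqref{eq:DRO} then becomes a joint maximization over $(x,q,\lambda)$ and the epigraph variables $\gamma_i$ with $\gamma_i\le \inf_{y\in[\yl,\yu]}\{\Pi(x,q,d_y)+\lambda(y-y_i)^2\}$.

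Next I would turn each inner infimum constraint into conic form. By Proposition~\ref{prop:piecewise_affine}, $\Pi(x,q,d_y)=\min\{\ell_1(y),\ell_2(y),\ell_3(y)\}$ is the minimum of three affine functions of $y$; hence $\gamma_i\le \inf_{y\in[\yl,\yu]}\{\min_k \ell_k(y)+\lambda(y-y_i)^2\}$ is equivalent to the three separate constraints $\gamma_i\le\inf_{y\in[\yl,\yu]}\{\ell_k(y)+\lambda(y-y_i)^2\}$ for $k=1,2,3$. For $k=1,2$ (the quadratic-in-$y$ branches with a genuine $\lambda y^2$ term after expanding $(y-y_i)^2$), I would dualize the box constraint $\yl\le y\le \yu$ with multipliers $\theta_i,\eta_i\ge0$ (resp.\ $\phi_i,\psi_i\ge0$), writing the inner problem as $\inf_y \{\lambda y^2 + (\text{linear in }y) - \theta_i(y-\yl)+\eta_i(y-\yu)+\text{const}\}$; minimizing the resulting convex quadratic in $y$ over $\RR$ (valid because the box multipliers are present) gives a constraint of the form ``constant $\ge \frac{(\text{linear coefficient})^2}{4\lambda}$,'' which is exactly a rotated second-order cone constraint, i.e.\ the two norm inequalities displayed in \eqref{eq:SOCP}, together with the sign constraints ensuring the right-hand sides are nonnegative (so the rotated-cone/hyperbolic constraint $uv\ge w^2,\ u,v\ge0$ can be written in the $\|(2w,u-v)\|\le u+v$ form). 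The third branch $\ell_3(y)=(p-(c+\delta)-c_m)x+\delta q$ is constant in $y$, so its infimum constraint collapses to the plain linear inequality $\gamma_i\le (p-(c+\delta)-c_m)x+\delta q$.

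The routine-but-careful part is the bookkeeping: expanding $\lambda(y-y_i)^2=\lambda y^2-2\lambda y_i y+\lambda y_i^2$, collecting the coefficient of $y$ (which produces the $p(1-p)$ and $(p-(c+\delta))(1-p)$ terms from $d_y=(1-p)y$ and the $-2\lambda y_i$ term), substituting $d_{\yl},d_{\yu}$ through the box-dual terms $\theta_i\yl-\eta_i\yu$, and matching everything to the constant term $\lambda y_i^2-c_m x-cq+\theta_i\yl-\eta_i\yu-\gamma_i$ appearing in \eqref{eq:SOCP}. I would also need the elementary fact that for a convex quadratic $a y^2+by+c$ with $a>0$, $\inf_{y\in\RR}=c-b^2/(4a)$, and handle the degenerate case $\lambda=0$ separately (then branches $1,2$ are affine in $y$ and their infima over the box are again linear, recovered as the limit of the cone constraints). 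The main obstacle I anticipate is verifying that dualizing only the box constraint—and then dropping the box on $y$ when taking the unconstrained quadratic minimum—is tight, i.e.\ that strong duality holds for each inner one-dimensional problem; this follows from convexity and Slater's condition (the box has nonempty interior since $\yl<\yu$), but the sign constraints $\lambda y_i^2-c_m x-cq+\theta_i\yl-\eta_i\yu-\gamma_i\ge0$ in \eqref{eq:SOCP} must be shown to be exactly what is needed for the hyperbolic constraint to be equivalent to the stated norm inequality, so I would prove the equivalence $\{uv\ge w^2,u\ge0\}\Leftrightarrow\{v\ge0,\ \|(2w,u-v)\|\le u+v\}$ carefully and apply it with $u$ equal to that constant term, $v=\lambda$, and $w$ the coefficient of $y$.
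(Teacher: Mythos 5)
Your proposal is correct and follows essentially the same route as the paper's proof: strong duality for the type-2 Wasserstein worst-case expectation, splitting the inner infimum across the three affine pieces of $\Pi$ from Proposition~\ref{prop:piecewise_affine}, dualizing the box $[\yl,\yu]$ with nonnegative multipliers, and minimizing the resulting convex quadratic over $\RR$ to obtain the conic constraints (the paper passes through an equivalent $2\times 2$ semidefinite condition before writing the rotated-cone form you derive directly, and treats the constant third piece exactly as you do). Only the bookkeeping details you already flag (e.g., the factor-of-two in identifying $w$ with half the linear coefficient, and stating the hyperbolic-to-norm equivalence with both $u,v\geq 0$) remain to be carried out.
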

This formulation consists of two second-order cone constraints, corresponding to the affine components \( p d_Y - c_m x - c q \) and \( (p - (c + \delta)) d_Y - c_m x + \delta q \) in the piecewise profit function \eqref{eq:piecewise_affine}. The final constraint, which corresponds to the third affine piece \( (p - (c + \delta) - c_m)x + \delta q \), does not involve the uncertain demand \( d_Y \) and hence remains linear.

The Wasserstein DRO model also provides out-of-sample performance guarantees on the data-driven solution of \eqref{eq:SOCP}. By the concentration inequality for the Wasserstein distance, we can ensure that the true distribution is contained in the ambiguity set with high confidence.

\begin{lemma}[Theorem 18 in \citet{kuhn2019wasserstein}]
Suppose the unknown true distribution $\mathbb{P}$ is light-tailed in the sense that there exist $\alpha > 2$ and $A > 0$ such that $\mathbb{E}_{\mathbb{P}}[\exp(|Y|^\alpha)] \leq A$. Then, there are constants $c_1, c_2 > 0$ that depend on $\mathbb{P}$ only through $\alpha$ and $A$ such that, for any $\eta \in (0,1]$, the containment 
\[
\mathbb{P} \in \mathcal{P}_\epsilon
\]
holds with probability at least $1 - \eta$ whenever $\epsilon$ exceeds
\begin{equation}
\label{eq:eps_Wass}
\epsilon_N(\eta) =
\begin{cases}
\left( \dfrac{\log(c_1/\eta)}{c_2 N} \right)^{1/2} & \text{if } N \geq \dfrac{\log(c_1/\eta)}{c_2}, \\[3mm]
\left( \dfrac{\log(c_1/\eta)}{c_2 N} \right)^{1/\alpha} & \text{if } N < \dfrac{\log(c_1/\eta)}{c_2}.
\end{cases}
\end{equation}
\end{lemma}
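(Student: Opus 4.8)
\emph{Proof proposal.} The statement is quoted verbatim from \citet[Theorem~18]{kuhn2019wasserstein}, so in the paper the ``proof'' is essentially a pointer to that reference together with the observation that our setting is covered (here the uncertain quantity $Y$ is scalar and we use the type-$2$ metric). What follows is a sketch of the argument underlying that theorem, which would also serve to verify the specialization. The plan is to reduce the claim to the sharp measure-concentration rates for empirical distributions in Wasserstein distance (Fournier and Guillin, 2015) and then invert the resulting tail bound to extract the radius $\epsilon_N(\eta)$.

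First I would invoke the in-expectation bound: for a distribution $\mathbb P$ on $\mathbb R$ with $\mathbb E_{\mathbb P}[\exp(|Y|^\alpha)]\le A$ and $\alpha>2$, one has $\mathbb E\big[\Wass_2(\hat{\mathbb P}_N,\mathbb P)^2\big]\le C N^{-1/2}$ for a constant $C$ depending on $\mathbb P$ only through $(\alpha,A)$; in dimension $d=1$ the rate exponent is governed by the regime $d<2r$ with $r=2$, so there is no dimension-induced slowdown. Second, I would upgrade this to a concentration statement: there exist $c_1,c_2>0$, again functions of $(\alpha,A)$ alone, such that $\mathbb P\big(\Wass_2(\hat{\mathbb P}_N,\mathbb P)\ge t\big)\le c_1\exp(-c_2 N t^2)$ in a moderate-deviation range and $\le c_1\exp(-c_2 N t^{\alpha})$ for large $t$. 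The transition between these two tails is precisely what produces the two cases in \eqref{eq:eps_Wass}. The standard route is a truncation argument combined with a bounded-differences (McDiarmid) inequality applied to the empirical Wasserstein functional, with the exponential-moment hypothesis controlling the discarded tail mass.

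Third, I would set the relevant tail bound equal to $\eta$ and solve for $t$: in the sub-Gaussian regime this gives $t=\big(\log(c_1/\eta)/(c_2 N)\big)^{1/2}$, which is the correct (i.e.\ moderate) choice exactly when $N\ge \log(c_1/\eta)/c_2$; otherwise the heavier branch applies and yields $t=\big(\log(c_1/\eta)/(c_2 N)\big)^{1/\alpha}$. Choosing $\epsilon\ge\epsilon_N(\eta)$ then forces $\Wass_2(\hat{\mathbb P}_N,\mathbb P)\le\epsilon$, i.e.\ $\mathbb P\in\mathcal P_\epsilon$, with probability at least $1-\eta$, which is the claim.

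The only genuinely delicate step is the second one: passing from the in-expectation Fournier--Guillin rate to an exponential concentration inequality while keeping every constant dependent on $\mathbb P$ only through $\alpha$ and $A$, and pinning down the threshold at which the sub-Gaussian regime gives way to the $\alpha$-regime. Since this is exactly the content of \citet[Theorem~18]{kuhn2019wasserstein}, in the paper we simply cite it and instantiate it with $Y$ scalar and $r=2$.
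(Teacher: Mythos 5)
Your proposal is correct and matches the paper's approach exactly: the paper gives no proof of this lemma, which is imported as Theorem~18 of \citet{kuhn2019wasserstein}, so the citation (plus the observation that the scalar, light-tailed setting with the chosen metric is covered) constitutes the entire argument. Your sketch of the underlying Fournier--Guillin measure-concentration bound and its inversion into the two branches of $\epsilon_N(\eta)$ is the standard machinery behind that cited theorem and is consistent with the statement as used here.
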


This concentration inequality leads to the following finite-sample guarantee.

\begin{proposition}
Set the radius $\epsilon$ in the Wasserstein ambiguity set \eqref{eq:Wasserstein} according to the threshold in \eqref{eq:eps_Wass}, and let $(\hat x, \hat q)$ and $\hat V$ denote the optimal solution and optimal value of the problem \eqref{eq:SOCP}, respectively. Then, with probability at least $1 - \eta$, the following out-of-sample performance guarantee holds:
\[
\mathbb{E}_\mathbb{P}[\Pi(\hat x,\hat q,Y)] \geq \hat V.
\]
\end{proposition}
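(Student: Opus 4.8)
The plan is the standard two-step argument that couples a measure-concentration bound with the min-max definition of the DRO optimal value. First I would invoke the concentration inequality stated immediately above (Theorem~18 in \citet{kuhn2019wasserstein}): since the radius is set to $\epsilon\geq\epsilon_N(\eta)$ and the unknown data-generating distribution $\mathbb{P}$ is light-tailed, the containment event $\mathcal{E}:=\{\mathbb{P}\in\mathcal{P}_\epsilon\}$ occurs with probability at least $1-\eta$. This event is measurable with respect to the sample $\{y_i\}_{i\in[N]}$, because the Wasserstein ambiguity set \eqref{eq:Wasserstein} is centered at the empirical distribution $\hat{\mathbb{P}}$; the same is true of the data-driven quantities $(\hat x,\hat q)$ and $\hat V$ returned by \eqref{eq:SOCP}, so the probabilistic claim is well posed.

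Second, I would verify that on $\mathcal{E}$ the asserted bound holds deterministically. By Theorem~\ref{thm:Wasserstein}, problem \eqref{eq:SOCP} is an exact reformulation of the distributionally robust problem \eqref{eq:DRO} instantiated with the ambiguity set $\mathcal{P}_\epsilon$, so
\[
\hat V \;=\; \max_{\substack{x,q\in[\yl,\yu]\\ x\geq q}}\ \min_{\Q\in\mathcal{P}_\epsilon}\ \mathbb{E}_{\Q}\!\big[\Pi(x,q,Y)\big]\;=\;\min_{\Q\in\mathcal{P}_\epsilon}\ \mathbb{E}_{\Q}\!\big[\Pi(\hat x,\hat q,Y)\big],
\]
where the second equality uses that $(\hat x,\hat q)$ attains the outer maximum; the inner minimum is attained because $\Q\mapsto\mathbb{E}_{\Q}[\Pi(\hat x,\hat q,Y)]$ is weakly continuous and $\mathcal{P}_\epsilon$ is weakly compact, the support $[\yl,\yu]$ being compact and $\Pi(\hat x,\hat q,\cdot)$ bounded and continuous. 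On $\mathcal{E}$ the true distribution $\mathbb{P}$ is feasible for this inner minimization, hence
\[
\mathbb{E}_{\mathbb{P}}\!\big[\Pi(\hat x,\hat q,Y)\big]\;\geq\;\min_{\Q\in\mathcal{P}_\epsilon}\ \mathbb{E}_{\Q}\!\big[\Pi(\hat x,\hat q,Y)\big]\;=\;\hat V.
\]
Intersecting the two steps gives $\mathbb{E}_{\mathbb{P}}[\Pi(\hat x,\hat q,Y)]\geq\hat V$ on an event of probability at least $1-\eta$, which is precisely the claimed out-of-sample guarantee.

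I do not anticipate a genuine obstacle here: the argument merely chains the concentration inequality with the max-min structure of the reformulation. The two points that deserve explicit mention are (i) that $\hat V$ equals the \emph{worst-case} expected profit at $(\hat x,\hat q)$ rather than a mere bound, which is exactly the equivalence established in Theorem~\ref{thm:Wasserstein} and therefore requires no new work, and (ii) that the containment event and the estimators $(\hat x,\hat q,\hat V)$ are measurable functions of the same sample, so that the two steps may legitimately be intersected. I expect the final proof to occupy only a short paragraph.
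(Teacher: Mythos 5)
Your proof is correct and is exactly the argument the paper relies on (the paper states this proposition without a separate proof, since it follows directly from the preceding concentration lemma combined with the max--min structure certified by Theorem~\ref{thm:Wasserstein}): on the event $\{\mathbb{P}\in\mathcal{P}_\epsilon\}$, which has probability at least $1-\eta$, the true distribution is feasible for the inner minimization at $(\hat x,\hat q)$, giving $\mathbb{E}_{\mathbb{P}}[\Pi(\hat x,\hat q,Y)]\geq \inf_{\mathbb{Q}\in\mathcal{P}_\epsilon}\mathbb{E}_{\mathbb{Q}}[\Pi(\hat x,\hat q,Y)]=\hat V$. The only superfluous element is the weak-compactness/attainment discussion, since the inequality already holds with an infimum in place of a minimum.
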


This proposition establishes a finite-sample performance guarantee. It ensures that the optimal worst-case objective value $\hat V$ serves as a reliable statistical lower bound for the true expected profit $\mathbb{E}_\mathbb{P}[\Pi(\hat x,\hat q,Y)]$ with a high confidence level. Unlike asymptotic results that only hold for large datasets, this guarantee is valid for any sample size $N$, providing a theoretical certificate that protects the firm in data-driven environments.

\section{Models with Constraint on Waste-to-Consumption Ratio} 
\label{sec:WTC-constr_theory}
In the previous section, we developed distributionally robust models for the production planning problem with quick response flexibility, which mitigate the impact of uncertainty in the underlying demand distribution. While the quick response system is often promoted as an effective strategy to reduce waste, it does not inherently prevent overproduction.
To address this limitation, we propose an enhanced formulation that explicitly limits the \emph{waste-to-consumption (WTC) ratio}, defined as
\begin{equation*}
\frac{\EE[(q-d_Y)^+]+\EE[(x-q-q_\delta)^+]}{\EE[\min\{d_Y, q+q_\delta\}]}.
\end{equation*}
The term $\EE[(q-d_Y)^+]$ represents the expected amount of \emph{unsold finished
goods}, whereas $\EE[(x-q-q_\delta)^+]$ is the expected quantity of \emph{unused raw material inventory}.  The denominator $\EE[\min\{d_Y,\,q+q_\delta\}]$ captures the expected
\emph{fulfilled demand}.  The WTC ratio thus quantifies the proportion of waste generated per unit of demand served and is widely used in practice to evaluate the trade-off between inventory waste and service level \citep{wrap2012valuing, quick_res}.

Incorporating this ratio constraint into the DRO model \eqref{eq:DRO}, we obtain the following robust production planning problem:
\begin{equation}
\label{eq:DRO_WTC_constraint}
\begin{array}{rl}
    \displaystyle \max  \;\;\; &\displaystyle  \min_{\Q\in\mP} \EE_{\Q}\left[\Pi(x,q,d_Y)\right]
    \\
    \st\;\;& \displaystyle x,q\in[\yl,\yu]\\ 
    \;\;\; & \displaystyle x\geq q\\
    \;\;\;&\displaystyle \frac{\EE_\Q[(q-d_Y)^+]+\EE_\Q[(x-q-q_\delta)^+]}{\EE_\Q[\min\{d_Y, q+q_\delta\}]}\leq \tau\qquad\forall\Q\in\mP. 
\end{array}    
\end{equation}
This formulation ensures that the waste-to-consumption ratio remains below a specified threshold $\tau$ for all distributions $\Q$ within the ambiguity set $\mathcal{P}$, thereby encouraging production plans that are both environmentally sustainable and operationally responsible.

To the best of our knowledge, no prior work has directly incorporated this ratio constraint into an optimization model---likely due to the perceived intractability of the resulting formulation. This paper takes a first step toward addressing this challenge by deriving equivalent reformulations that admit linear and second-order cone programming representations. We begin with the following key equivalence result. 
\begin{theorem}
\label{thm:wtc_ratio}
Assume $\yl>0$. The distributionally robust WTC ratio constraint

\begin{align}
\label{eq:wce_ratio}
&\;\frac{\EE_\Q[(q-d_Y)^+]+\EE_\Q[(x-q-q_\delta)^+]}{\EE_\Q[\min\{d_Y, q+q_\delta\}]}\leq \tau\qquad\forall\Q\in\mP
\end{align}

is satisfied if and only if the following worst-case expectation constraint holds:
\begin{align}
\label{eq:wce_constraint}
 &\; \sup_{\Q\in\mP}\EE_\Q[\max\{x-(1+\tau)d_Y,-\tau x\}]\leq 0.
\end{align}

\end{theorem}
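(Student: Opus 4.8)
The plan is to manipulate the fractional constraint \eqref{eq:wce_ratio} into the expectation form \eqref{eq:wce_constraint} by first simplifying the numerator and denominator pointwise, and then exploiting positivity of the denominator to clear the fraction uniformly over the ambiguity set. The first key observation is an algebraic identity: for every realization $y$, the total waste plus the fulfilled demand equals the total fabric procured, i.e.\ $(q-d_y)^+ + (x-q-q_\delta(x,q,y))^+ + \min\{d_y, q+q_\delta(x,q,y)\} = x$. This is just conservation of material — every unit of fabric is either turned into a sold good, left as unsold finished goods, or left as unused fabric — and can be checked piece by piece using the three regimes of Proposition~\ref{prop:piecewise_affine} (equivalently, from the explicit formula \eqref{q_delta_explic} for $q_\delta$). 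Taking expectations, this gives $\EE_\Q[\text{numerator}] = x - \EE_\Q[\min\{d_Y,q+q_\delta\}]$, so if we write $C(\Q) \coloneqq \EE_\Q[\min\{d_Y, q+q_\delta\}]$ for the (expected) consumption, the WTC constraint becomes $\frac{x - C(\Q)}{C(\Q)} \le \tau$, i.e.\ $x \le (1+\tau)\,C(\Q)$, for all $\Q\in\mP$.

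Next I would argue that the denominator $C(\Q)$ is strictly positive for every $\Q \in \mP$, which is where the hypothesis $\yl > 0$ enters: since $d_Y = (1-p)Y \ge (1-p)\yl > 0$ almost surely (using $p<1$, which follows from $p>c+c_m$ and nonnegativity of costs, or is implicit in the valuation model), and $\min\{d_y, q+q_\delta(x,q,y)\} \ge \min\{d_y, q\} \ge 0$, the only way consumption could vanish is if $q=0$ and the minimum is pinned at $q$; but whenever $q + q_\delta(x,q,y) > 0$ on a set of positive measure — which holds because $q_\delta(x,q,y) = \min\{(d_y-q)^+, x-q\}$ is positive once $d_y>q$ — we get $C(\Q)>0$. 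One should handle the degenerate corner $x=q=0$ separately (there the left-hand side of \eqref{eq:wce_ratio} is $0/0$ or the constraint is vacuous/infeasible, and the claimed equivalence with \eqref{eq:wce_constraint}, whose left side is $0$, still holds). Modulo that corner, positivity of $C(\Q)$ lets us clear the denominator without flipping the inequality, so \eqref{eq:wce_ratio} holds for all $\Q\in\mP$ if and only if $x - (1+\tau)C(\Q) \le 0$ for all $\Q\in\mP$, i.e.\ $\sup_{\Q\in\mP}\bigl(x - (1+\tau)\EE_\Q[\min\{d_Y,q+q_\delta\}]\bigr) \le 0$.

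It remains to move $x$ inside the expectation and rewrite the integrand in the stated max-form. Writing $x = \EE_\Q[x]$ and using linearity, $x - (1+\tau)\EE_\Q[\min\{d_Y,q+q_\delta\}] = \EE_\Q\bigl[x - (1+\tau)\min\{d_Y, q+q_\delta\}\bigr]$, so I need the pointwise identity $x - (1+\tau)\min\{d_y, q+q_\delta(x,q,y)\} = \max\{x-(1+\tau)d_y,\, -\tau x\}$. To see this, note $q + q_\delta(x,q,y) = \min\{\max\{d_y,q\}, x\}$, hence $\min\{d_y, q+q_\delta\} = \min\{d_y, x\}$ (since $\min\{d_y,\min\{\max\{d_y,q\},x\}\} = \min\{d_y,x\}$ because $\min\{d_y,\max\{d_y,q\}\}=d_y$). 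Therefore $x - (1+\tau)\min\{d_y,x\} = \max\{x-(1+\tau)d_y,\ x-(1+\tau)x\} = \max\{x-(1+\tau)d_y,\ -\tau x\}$, which is exactly the integrand in \eqref{eq:wce_constraint}. Combining the three steps yields the claimed equivalence.

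The main obstacle I anticipate is the careful bookkeeping in the material-balance identity and the reduction $\min\{d_y, q+q_\delta\} = \min\{d_y,x\}$: these are elementary but must be verified across all demand regimes (and at the regime boundaries), and one must be slightly careful that $q_\delta$ is defined via the firm's \emph{optimal} second-stage response \eqref{q_delta_explic} rather than an arbitrary recourse. The only genuinely subtle point beyond algebra is the strict positivity of the denominator and the treatment of the degenerate case $x=q=0$; everything else is linearity of expectation and the fact that a fractional inequality with positive denominator is equivalent to the cleared-denominator inequality, uniformly over $\Q$.
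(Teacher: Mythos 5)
Your proof is correct and follows essentially the same route as the paper's: both use $\yl>0$ to ensure a strictly positive denominator, clear the fraction uniformly over $\Q\in\mP$, and reduce the integrand pointwise to $\max\{x-(1+\tau)d_Y,-\tau x\}$ via the simplification $\min\{d_Y,q+q_\delta\}=\min\{d_Y,x\}$. Your material-balance identity (waste $=x-{}$consumption) is just a compact repackaging of the paper's step combining the two deadstock terms into $(x-d_Y)^+$, so the two arguments coincide in substance.
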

This result shows that the infinite family of WTC ratio constraints can be equivalently reformulated as a single worst-case expectation constraint involving a convex, piecewise affine function. This reformulation is essential for enabling tractable DRO models, as discussed next.

\subsection{Mean-MAD DRO Model}
\label{sec:mad}
We first consider using the mean-MAD ambiguity set \eqref{eq:mean_MAD_ambiguity} for the ambiguity set $\mathcal P$ in \eqref{eq:DRO_WTC_constraint}. Due to the presence of the additional constraint, the problem no longer admits a closed-form solution. Nevertheless, it can be reformulated as a tractable linear program.
\begin{proposition}
\label{prop:mean-MAD_WTC}
Let $w_\yl$, $w_\mu$, and $w_\yu$ be defined as in Proposition \ref{prop:DRO_mean_MAD_equiv}. 
Using the mean-MAD ambiguity set \eqref{eq:mean_MAD_ambiguity}, the distributionally robust quick response problem with WTC ratio constraint is equivalent to the following linear program: 
\begin{equation*}
\label{eq:mean-MAD_WTC}
\begin{array}{rl}
\max&\displaystyle \;\;w_\yl(pd_\yl- c_m x - c q )+w_\mu\Pi(x, q, \mu)+w_\yu((p-(c+\delta) -c_m)   x+\delta q)\\[1mm]
\st& \displaystyle\;\;x,q\in[\yl,\yu]\\[1mm]
& \displaystyle\;\;x\geq q\\
&\displaystyle\;\; w_\yl(x-(1+\tau)d_\yl) + w_\mu\max\{x-(1+\tau)d_\mu,-\tau x\}- w_\yu \tau x\leq 0. \end{array}
\end{equation*}
\end{proposition}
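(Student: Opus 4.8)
The plan is to reduce the WTC-constrained DRO problem \eqref{eq:DRO_WTC_constraint} to an optimization over a single fixed three-point distribution, so that both the worst-case objective and the worst-case WTC constraint collapse to finite sums, and then to linearize the resulting piecewise-affine terms. As the first step I would apply Theorem~\ref{thm:wtc_ratio} to replace the semi-infinite family of ratio constraints by the single worst-case expectation constraint $\sup_{\Q\in\mP}\EE_\Q[g_x(Y)]\le 0$, where $g_x(y):=\max\{x-(1+\tau)d_y,-\tau x\}$. For fixed $x$ this $g_x$ is a pointwise maximum of two affine functions of $y$, hence convex and piecewise affine on $[\yl,\yu]$, whereas by Proposition~\ref{prop:piecewise_affine} the map $y\mapsto\Pi(x,q,y)$ is concave and piecewise affine. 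Thus for each $(x,q)$ the objective is a worst-case expectation of a concave function while feasibility is a worst-case expectation of a convex function.

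The central observation is that over the mean-MAD ambiguity set $\mP$ the relevant extremal distribution is the \emph{same} in both cases, namely the three-point law $\PP^\star$ of \eqref{eq:three_probs}. For the objective this is exactly Proposition~\ref{prop:DRO_mean_MAD_equiv}, so $\min_{\Q\in\mP}\EE_\Q[\Pi(x,q,d_Y)]=w_\yl\Pi(x,q,\yl)+w_\mu\Pi(x,q,\mu)+w_\yu\Pi(x,q,\yu)$. For the constraint I would invoke the complementary mean-MAD bound for convex functions (the Ben-Tal and Hochman type inequality used in \citet{postek2018robust} and underlying the proof of Proposition~\ref{prop:DRO_mean_MAD_equiv}): for any convex $\phi$ on $[\yl,\yu]$ one has $\sup_{\Q\in\mP}\EE_\Q[\phi(Y)]=w_\yl\phi(\yl)+w_\mu\phi(\mu)+w_\yu\phi(\yu)$, the bound being attained because $\PP^\star\in\mP$, which one verifies from \eqref{eq:three_probs} by checking that $\PP^\star$ has mean $\mu$ and MAD $\sigma$ and nonnegative weights. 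Substituting $\phi=g_x$, the constrained DRO problem becomes a finite-dimensional problem over $(x,q)\in[\yl,\yu]$ with $x\ge q$, objective $w_\yl\Pi(x,q,\yl)+w_\mu\Pi(x,q,\mu)+w_\yu\Pi(x,q,\yu)$, and constraint $w_\yl g_x(\yl)+w_\mu g_x(\mu)+w_\yu g_x(\yu)\le 0$.

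It then remains to identify the active affine pieces at $y=\yl$ and $y=\yu$ and to linearize. Restricting to $d_\yl\le q\le x\le d_\yu$ is without loss of optimality even in the presence of the WTC constraint: $q\ge\yl\ge d_\yl$ holds automatically, while fabric or production in excess of the largest possible demand $d_\yu$ is never consumed, so trimming $x$ and $q$ down to at most $d_\yu$ leaves fulfilled demand unchanged, does not decrease $\min_{\Q\in\mP}\EE_\Q[\Pi(x,q,d_Y)]$, and does not increase the WTC ratio under any $\Q$. On this domain the first piece of \eqref{eq:piecewise_affine} is active at $\yl$, so $\Pi(x,q,\yl)=pd_\yl-c_m x-cq$; the third piece is active at $\yu$, so $\Pi(x,q,\yu)=(p-(c+\delta)-c_m)x+\delta q$; and likewise $g_x(\yl)=x-(1+\tau)d_\yl$ (since $x\ge d_\yl$) and $g_x(\yu)=-\tau x$ (since $x\le d_\yu$). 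This reproduces exactly the objective and the constraint in the statement. Finally, $\Pi(x,q,\mu)$ is the minimum of three affine functions of $(x,q)$ by \eqref{eq:piecewise_affine} and appears inside a maximization, and $\max\{x-(1+\tau)d_\mu,-\tau x\}$ is a maximum of two affine functions appearing in a $\le 0$ constraint; introducing a single auxiliary variable for the former (hypograph reformulation) and splitting the latter into two linear inequalities turns the problem into a linear program.

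The step I expect to require the most care is the handling of the constraint. Unlike the objective, whose three-point worst case comes for free from Proposition~\ref{prop:DRO_mean_MAD_equiv} via concavity of $\Pi$, the WTC proxy enters through a \emph{supremum} of a \emph{convex} function, so one must appeal to the complementary (upper) mean-MAD bound and confirm that $\PP^\star$ is feasible for $\mP$ so that the bound is tight. The accompanying dominance argument that justifies restricting to $x\le d_\yu$ — which is what makes the third affine piece active at $\yu$ and forces $g_x(\yu)=-\tau x$ — is elementary but should be verified simultaneously against the worst-case profit and the worst-case ratio.
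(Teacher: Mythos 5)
Your proposal is correct and follows essentially the same route as the paper: invoke Theorem~\ref{thm:wtc_ratio} to pass to the worst-case expectation constraint, apply the Ben-Tal--Hochman three-point extremal distribution (the same $\PP^\star$ as in Proposition~\ref{prop:DRO_mean_MAD_equiv}) to both the concave objective and the convex constraint function, and evaluate the active affine pieces at $\yl$ and $\yu$. The only difference is that you make explicit the dominance argument justifying $x\le d_\yu$ (so that $g_x(\yu)=-\tau x$ and the third profit piece is active at $\yu$), a step the paper's proof leaves implicit.
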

The final constraint corresponds to the worst-case expectation constraint \eqref{eq:wce_constraint} under the extremal distribution $\PP^\star$, which is supported on the three-point set $\{\yl, \mu, \yu\}$ with probabilities as defined in \eqref{eq:three_probs}. This linear reformulation enables efficient solution of the DRO model with the WTC ratio constraint using standard linear programming solvers.

\subsection{Wasserstein DRO Model}
We now consider the data-driven DRO model with the Wasserstein ambiguity set \eqref{eq:Wasserstein} and show that the problem admits a tractable second-order cone programming reformulation.  
\begin{proposition}
\label{prop:Wass_WTC}
Using the Wasserstein ambiguity set \eqref{eq:Wasserstein} with $r=2$, the distributionally robust quick response problem with WTC ratio constraint is equivalent to the second-order conic program: 
\begin{align*}
\max&\;\; -\epsilon^2\lambda+\frac{1}{N}\sum_{i\in[N]}\gamma_i\\
\st &\;\; x,q\in[\yl,\yu],\;\lambda,\alpha\in\RR_+,\;\bm\gamma,\bm\kappa\in\RR^N,\;\bm\theta,\bm\eta,\bm\phi,\bm\psi,\bm\beta,\bm\zeta\in\RR_+^N\\
&\;\; x\geq q\\
&\left\|\begin{bmatrix}
p(1-p)-\theta_i+\eta_i-2\lambda y_i\\
\lambda y_i^2- c_m x - c q +\theta_i\yl-\eta_i\yu-\gamma_i-\lambda
\end{bmatrix}\right\| 
 \leq \lambda y_i^2- c_m x - c q +\theta_i\yl-\eta_i\yu-\gamma_i+\lambda\quad\forall i\in[N]\\
  &\; \lambda y_i^2- c_m x - c q +\theta_i\yl-\eta_i\yu-\gamma_i\geq 0\quad\forall i\in[N]\\
&\left\|\begin{bmatrix}
(p-(c+\delta))(1-p)-\phi_i+\psi_i-2\lambda y_i\\
\lambda y_i^2- c_m x +\delta q +\phi_i\yl-\psi_i\yu-\gamma_i-\lambda
\end{bmatrix}\right\| 
 \leq \lambda y_i^2- c_m x +\delta q +\phi_i\yl-\psi_i\yu-\gamma_i+\lambda\quad\forall i\in[N]\\
&\; \lambda y_i^2- c_m x +\delta q +\phi_i\yl-\psi_i\yu-\gamma_i\geq 0 \quad \forall i\in[N]\\
& \gamma_i \leq(p-(c+\delta) -c_m)   x
 +\delta q\qquad\forall i\in[N]\\
 & \epsilon^2\alpha + \frac{1}{N}\sum_{i\in[N]}\kappa_i\leq 0 \\
&\left\|\begin{bmatrix}
(1+\tau)(1-p)-\beta_i+\zeta_i-2\alpha y_i\\
\alpha y_i^2- x +\beta_i\yl-\zeta_i\yu+{\kappa_i}-\alpha
\end{bmatrix}\right\| 
 \leq \alpha y_i^2- x +\beta_i\yl-\zeta_i\yu+\kappa_i+\alpha\quad\forall i\in[N]\\
&\; \alpha y_i^2- x +\beta_i\yl-\zeta_i\yu+\kappa_i\geq 0 \quad \forall i\in[N]\\
&\; \kappa_i\geq   -\tau x\quad\forall i\in[N]. 
\end{align*}
\end{proposition}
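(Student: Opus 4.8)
The plan is to assemble the reformulation from two independent blocks: the objective block, which is inherited verbatim from Theorem~\ref{thm:Wasserstein}, and the WTC block, which is obtained by feeding the worst-case expectation constraint of Theorem~\ref{thm:wtc_ratio} through the same type-$2$ Wasserstein dualization machinery. Since the objective of \eqref{eq:DRO_WTC_constraint} and its constraints $x,q\in[\yl,\yu]$ and $x\ge q$ are identical to those of \eqref{eq:DRO}, the rows of the claimed program involving $\lambda$, $\bm\gamma$, $\bm\theta$, $\bm\eta$, $\bm\phi$, $\bm\psi$ --- namely the objective $-\epsilon^2\lambda+\tfrac1N\sum_{i\in[N]}\gamma_i$, the two second-order cone constraints, the two associated domain inequalities, and the linear bound $\gamma_i\le(p-(c+\delta)-c_m)x+\delta q$ --- coincide exactly with the SOCP representation of $\min_{\Q\in\mathcal P_\epsilon}\EE_{\Q}[\Pi(x,q,d_Y)]$ established in the proof of Theorem~\ref{thm:Wasserstein}. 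Nothing new is needed there.

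For the WTC block, I would first invoke Theorem~\ref{thm:wtc_ratio} (its hypothesis $\yl>0$ is assumed here as well) to replace the infinite family of fractional ratio constraints with the single worst-case expectation constraint $\sup_{\Q\in\mathcal P_\epsilon}\EE_{\Q}\!\big[\max\{x-(1+\tau)d_Y,\,-\tau x\}\big]\le0$. Substituting $d_Y=(1-p)Y$, the integrand $h(y)\coloneqq\max\{x-(1+\tau)(1-p)y,\,-\tau x\}$ is convex and piecewise affine in $y$, with the two affine pieces $h_1(y)=x-(1+\tau)(1-p)y$ and $h_2(y)\equiv-\tau x$. Applying the same strong-duality result for worst-case expectations over a type-$2$ Wasserstein ball that underlies Theorem~\ref{thm:Wasserstein} yields
\[
\sup_{\Q\in\mathcal P_\epsilon}\EE_{\Q}[h(Y)]=\inf_{\alpha\ge0}\Big\{\alpha\epsilon^2+\tfrac1N\sum_{i\in[N]}\ \sup_{y\in[\yl,\yu]}\big[h(y)-\alpha(y-y_i)^2\big]\Big\},
\]
with the infimum attained because $\alpha\epsilon^2$ is coercive in $\alpha$ and the inner suprema stay bounded on the compact support $[\yl,\yu]$. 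Introducing epigraph variables $\kappa_i\ge\sup_{y\in[\yl,\yu]}[h(y)-\alpha(y-y_i)^2]$ turns the constraint into the linear inequality $\epsilon^2\alpha+\tfrac1N\sum_{i\in[N]}\kappa_i\le0$.

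It remains to give a conic description of the epigraph constraints on the $\kappa_i$. Because the pointwise maximum commutes with the supremum over $y$, the inequality $\kappa_i\ge\sup_{y\in[\yl,\yu]}[h(y)-\alpha(y-y_i)^2]$ is equivalent to the pair $\kappa_i\ge\sup_{y\in[\yl,\yu]}[h_k(y)-\alpha(y-y_i)^2]$, $k=1,2$. For $k=2$ the supremum equals $-\tau x$ (attained at $y=y_i$), giving the constraint $\kappa_i\ge-\tau x$. For $k=1$ I would dualize the interval constraint $y\in[\yl,\yu]$ with nonnegative multipliers $\beta_i$ and $\zeta_i$, evaluate the resulting unconstrained concave-quadratic maximization in $y$ in closed form, and then recast the scalar inequality $\big[(1+\tau)(1-p)-\beta_i+\zeta_i-2\alpha y_i\big]^2\le 4\alpha\big[\alpha y_i^2-x+\beta_i\yl-\zeta_i\yu+\kappa_i\big]$ as the rotated second-order cone constraint claimed, together with the side condition $\alpha y_i^2-x+\beta_i\yl-\zeta_i\yu+\kappa_i\ge0$, using the elementary equivalence $a^2\le 4\alpha b\iff\big\|(a,\,\alpha-b)\big\|\le\alpha+b$ (for $\alpha\ge0$) that the paper uses repeatedly. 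Stacking the two blocks produces precisely the stated SOCP.

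The single step that requires genuine care --- and hence the main obstacle --- is the strong-duality identity together with attainment of the infimum over $\alpha$ for the worst-case expectation of $h$: because $h$ is convex (not concave), one cannot simply reuse a concave-maximization argument, and finiteness of $\sup_{y\in[\yl,\yu]}[h(y)-\alpha(y-y_i)^2]$ for every $\alpha\ge0$, which delivers coercivity in $\alpha$, relies on compactness of $[\yl,\yu]$ and on $\yl>0$. This mirrors the corresponding step for the objective, so it can be handled exactly as in the proof of Theorem~\ref{thm:Wasserstein}; the rest --- splitting the maximum, Lagrangian dualization of an interval, and the SOC rewriting of a scalar quadratic inequality --- is routine.
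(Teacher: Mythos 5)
Your proposal is correct and follows essentially the same route as the paper: reuse the Theorem~\ref{thm:Wasserstein} block verbatim for the objective, invoke Theorem~\ref{thm:wtc_ratio}, apply the type-2 Wasserstein duality of \citet{blanchet2019quantifying} to the worst-case expectation of $\max\{x-(1+\tau)d_Y,-\tau x\}$, introduce epigraph variables $\kappa_i$, split the maximum (the constant piece giving $\kappa_i\ge-\tau x$), and dualize the interval constraint to obtain the stated second-order cone constraints. The step you flag as delicate is handled in the paper simply by citing the same duality remark used for the objective (the structure $\sup$ of a convex piecewise-affine integrand over the ball is exactly what that result covers), so no additional argument is needed.
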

Compared to the SOCP formulation \eqref{eq:SOCP} without the WTC ratio constraint, the above model includes an additional second-order cone constraint. This new constraint arises from dualizing the worst-case expectation in \eqref{eq:wce_constraint}, which enforces the WTC ratio bound under distributional ambiguity. Despite the added complexity, the reformulated problem remains efficiently solvable using standard SOCP solvers such as Gurobi or MOSEK.

While the ``green paradox" of QR strategies, where flexibility inadvertently fuels upstream over-procurement, has been identified in recent literature, there has been no methodological framework to actively mitigate this externality during the planning phase. Previous approaches primarily treat waste as a passive, ex-post outcome. In contrast, our approach rigorously integrates the WTC ratio as an ex-ante decision constraint. By converting this computationally prohibitive, semi-infinite fractional constraint into a tractable convex formulation, we transform the WTC ratio from a mere evaluation metric into an active control lever. This allows decision-makers to precisely calibrate the trade-off between economic responsiveness and environmental compliance, effectively operationalizing the concept of ``sustainable quick response" in practice.

\section{Experiments and Results}\label{sec:experiment}
In this section, we evaluate the performance of our proposed models and solution approaches through a series of numerical experiments. All computations were performed on an M4 MacBook Pro with 24 GB RAM and solved using Gurobi Optimizer 12.0.2.

\subsection{Model performance under different distributions}~\label{sec:diff_dist}

Our first set of experiments is designed to conduct a direct comparative analysis between our proposed DRO models in Section~\ref{sec:DRO} and the benchmark model from~\cite{quick_res}. A critical property of a good operational model is its ability to perform well not just under an idealized demand distribution, but across the complex and varied demand patterns encountered in practice. Therefore, the primary objective here is to evaluate the out-of-sample performance and robustness of the models when the true demand process deviates from a simple uniform distribution.

To this end, we will test both models under a range of alternative demand scenarios. For each scenario, we will assess and compare the models across three key dimensions: (i) the prescribed optimal procurement and production policies, (ii) the resulting expected profit, and (iii) the environmental impact, measured by the WTC ratio. This rigorous comparison will allow us to quantify the value of accounting for distributional ambiguity and to understand the structural differences in the policies prescribed by a robust versus a distribution-specific approach.

We set the model parameters to reflect conditions commonly observed in the fashion industry. Industry data suggests that raw materials typically account for 50-70\% of the total making cost (fabric + manufacturing) for apparel items \citep{ShanghaiGarment,Fibre2Fashion}. For the ease of presentation and to align with this cost structure, we assume raw material constitutes 60\% of the total, which implies $c_m = 1.5c$.  Without loss of generality, we set the unit manufacturing cost to $c=0.1$, and the unit raw material cost $c_m=0.15$. Finally, following the setup in~\cite{quick_res}, we set the selling price to $p = 0.6$.

We assume a limited number of historical demand observations are available, from which we construct the ambiguity sets for our DRO models. Specifically, the moment ambiguity set is defined by the sample mean and sample MAD, while the Wasserstein ambiguity set is centered at the empirical distribution $\hat{\PP}$ formed by these samples. Following the concentration results in literature~\citep{kuhn2019wasserstein}, we set the Wasserstein radius $\epsilon = C/\sqrt{N}$ and choose $C=0.1$ in our experiments.

Throughout our numerical study, we fix the in-sample size at $N=10$. This small sample size is intended to reflect real-world scenarios where collecting a large number of demand observations is often prohibitively expensive or impractical. To precisely assess the out-of-sample performance of the policies derived from this limited data, we use a large Monte Carlo simulation with 10,000 samples. While increasing the in-sample size would undoubtedly improve the performance of our data-driven models, our goal is to test them under more stringent conditions. By evaluating our approaches in this data-scarce environment, this experiment provides a conservative estimate of their potential benefits and demonstrates their practical value under significant informational limitations.

\subsubsection{Uniform distribution}
We begin our analysis under the uniform distribution, i.e., $Y \sim U[0,1]$. This scenario serves as a crucial benchmark because it precisely matches the distributional assumption of~\cite{quick_res}. Consequently, their closed-form policy represents the theoretical upper bound on performance in this idealized setting. By comparing our DRO models' decisions and outcomes against this optimal benchmark, we can quantify the \emph{price of robustness}, which is the potential profit loss incurred by our DRO models for hedging against distributional ambiguity.

\begin{figure}[h]
\label{fig:uniform_test_no_wtc}
\centering
\begin{subfigure}[t]{0.32\textwidth}
\centering
\includegraphics[width=1.0\textwidth]{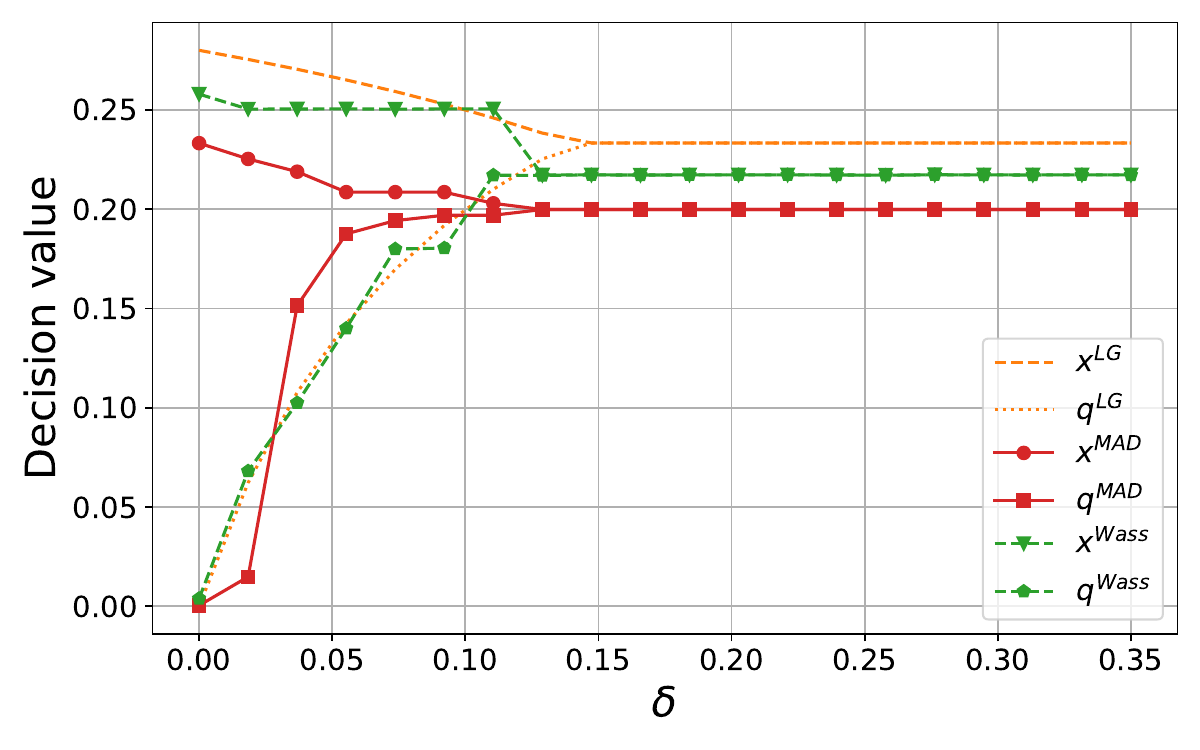}
\caption{Optimal policy}
\end{subfigure}
\begin{subfigure}[t]{0.32\textwidth}
\centering
\includegraphics[width=1.0\textwidth]{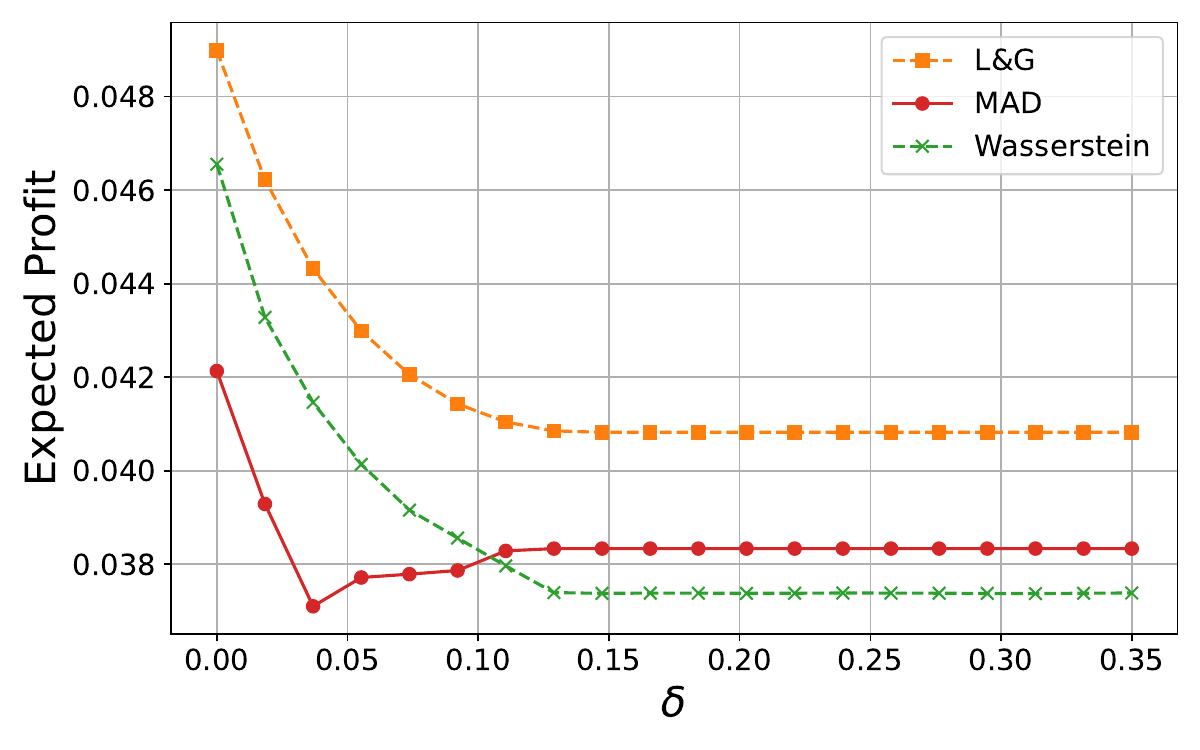}
\caption{Expected profit}
\end{subfigure}
\begin{subfigure}[t]{0.32\textwidth}
\centering
\includegraphics[width=1.0\textwidth]{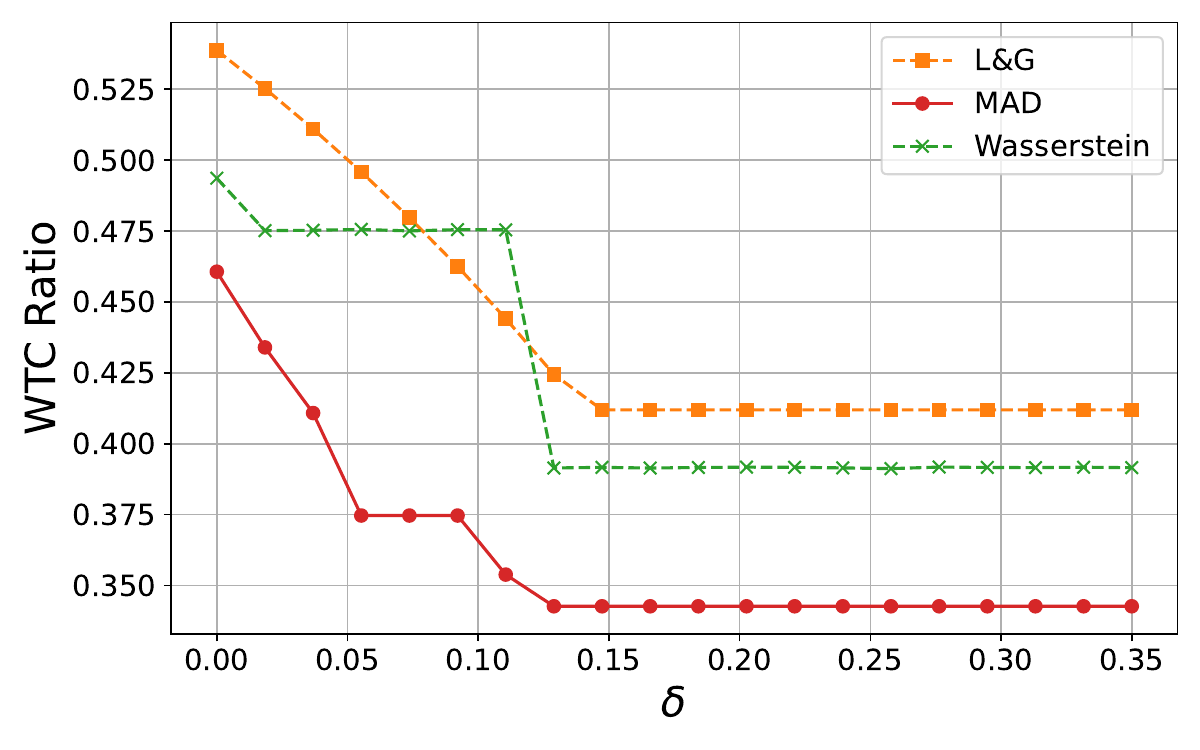}
\caption{WTC ratio}
\end{subfigure}
\caption{A comparison of optimal policies, expected profits, and WTC ratios for our DRO models against the benchmark by~\cite{quick_res} when the true demand distribution is uniform. Across all subfigures, the orange line represents~\cite{quick_res} model, the red line represents our DRO model with a MAD ambiguity set, and the green line represents our DRO model with a Wasserstein ambiguity set.}
\label{fig:uniform}
\end{figure}

Figure~\ref{fig:uniform} illustrates the performance of the three approaches, and all results are averaged over 50 trials. The first key observation comes from comparing the Wasserstein DRO model with the benchmark method by~\cite{quick_res}. As shown in Figure~\ref{fig:uniform}(a), the optimal policies derived from the Wasserstein DRO model are slightly less than the benchmark's policy, demonstrating its ability to learn a near-optimal policy even from a very limited sample size. 
Additionally, this small adjustment in policy yields an environmental benefit: the Wasserstein model also achieves lower WTC ratios for most values of $\delta$. This improvement stems from the robust nature of the DRO formulation; by hedging against distributional ambiguity, the model inherently moderates optimistic demand estimates, leading to more cautious procurement and production decisions that directly reduce waste.

The MAD DRO model, in contrast, represents a more conservative strategy. Its ambiguity set contains all distributions sharing the same mean and MAD, leading to more conservative policies.  As a result, its expected profits are noticeably lower than the benchmark's in Figure~\ref{fig:uniform}(b). However, this greater financial sacrifice delivers the lowest WTC ratio among the three approaches in Figure~\ref{fig:uniform}(c). This highlights a trade-off between profitability and environmental performance, stemming from the model's more risk-averse production decisions.

\subsubsection{Lognormal distribution}\label{sec:lognormal}

We then test the models under a Lognormal distribution, which is one of the most fundamental and widely used models for non-negative random variables. The Lognormal distribution has a long history in the operations management literature as a canonical choice for modeling consumer demand.  It has been extensively applied in a vast range of retail and supply chain problems, including inventory control~\citep{cobb2013inventory},  pricing and revenue management~\citep{talluri2006theory}, and supply chain insurance design. 

Here, we assume the market demand $Y \sim Lognormal(-0.84,0.54)$. This distribution has been carefully calibrated to match the mean and variance of the uniform distribution $U[0,1]$, which ensures our setting only slightly deviates from the ideal assumption. The key difference lies in the positive skewness of the lognormal distribution, which reflects scenarios where lower demand outcomes are more probable than higher ones. This setup allows us to examine the robustness of the benchmark policy and the ability of our proposed DRO models to learn asymmetric demand distributions.

\begin{figure}[h]
\label{fig:uniform_test_no_wtc}
\centering
\begin{subfigure}[t]{0.32\textwidth}
\centering
\includegraphics[width=1.0\textwidth]{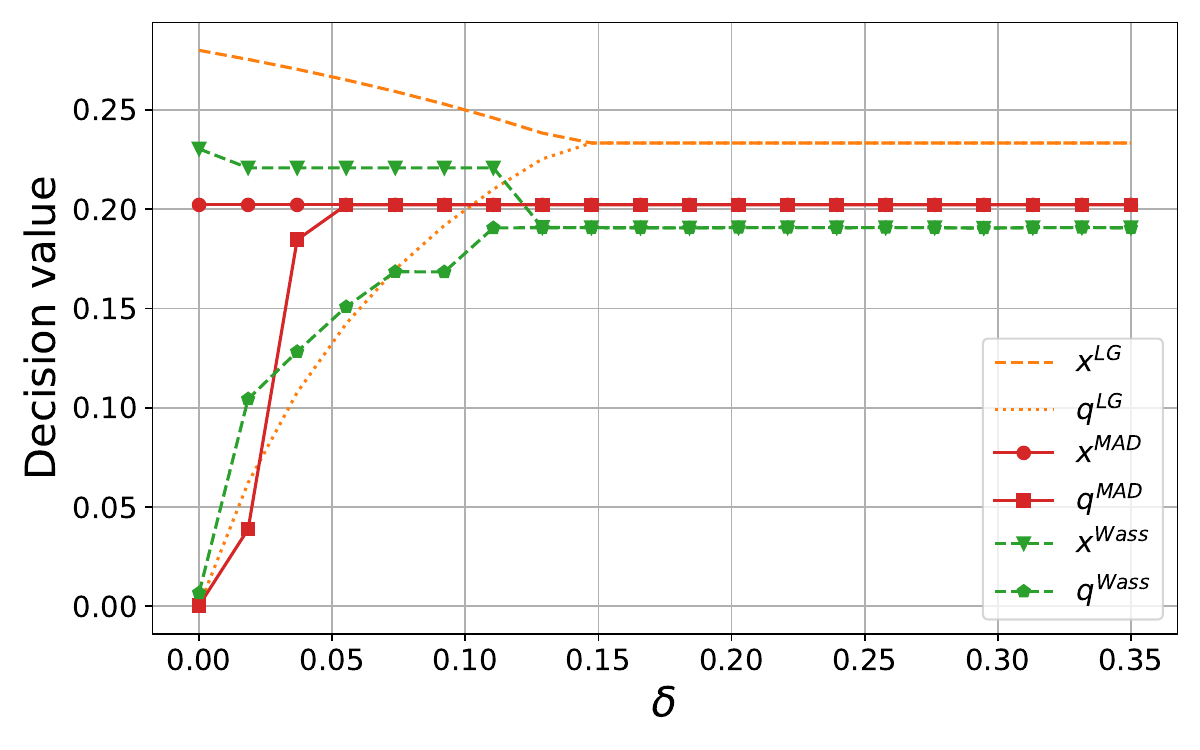}
\caption{Optimal policy}
\end{subfigure}
\begin{subfigure}[t]{0.32\textwidth}
\centering
\includegraphics[width=1.0\textwidth]{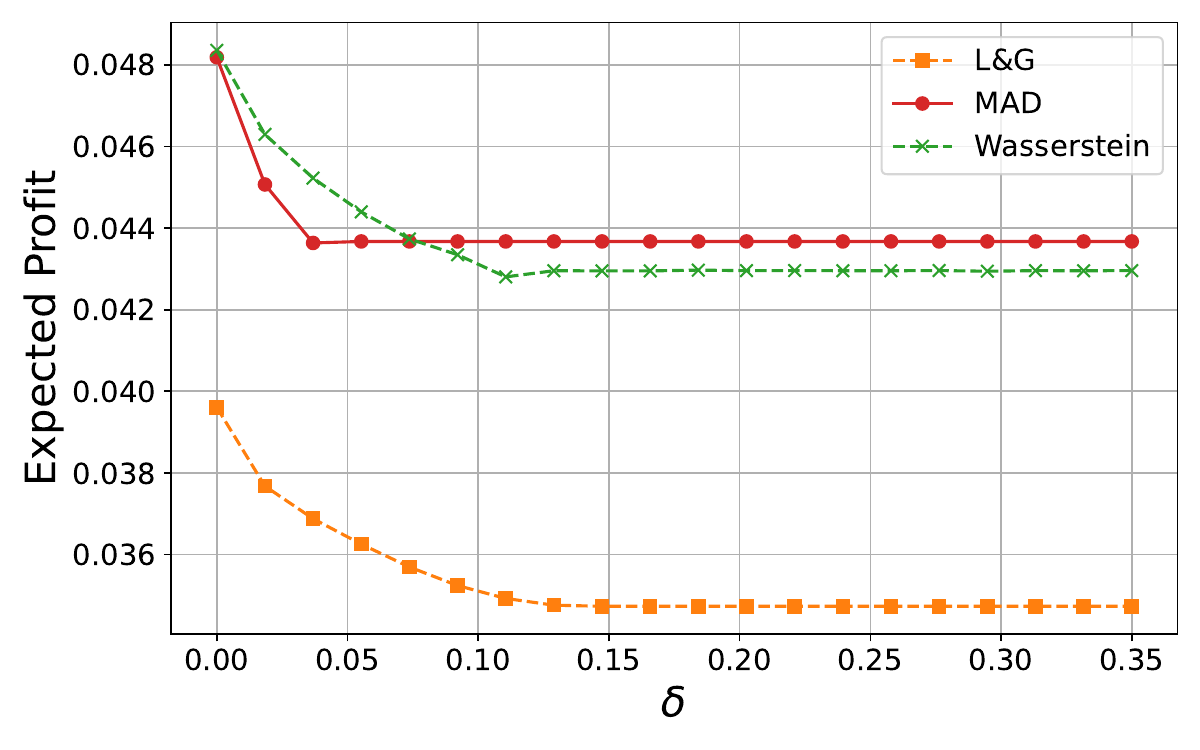}
\caption{Expected profit}
\end{subfigure}
\begin{subfigure}[t]{0.32\textwidth}
\centering
\includegraphics[width=1.0\textwidth]{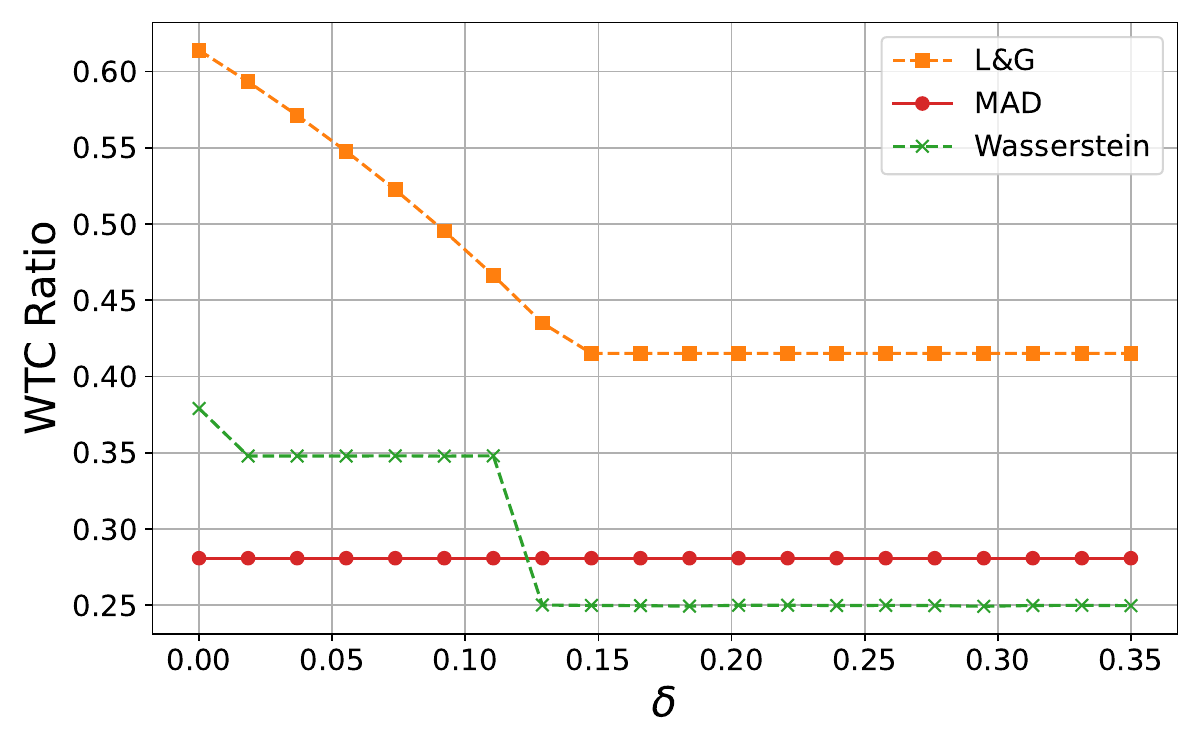}
\caption{WTC ratio}
\end{subfigure}
\caption{A comparison of optimal policies, expected profits and WTC ratios for our DRO models against the benchmark by~\cite{quick_res} when the true demand distribution is a Lognormal distribution. Across all subfigures, the orange line represents~\cite{quick_res} model, the red line represents our DRO model with a MAD ambiguity set, and the green line represents our DRO model with a Wasserstein ambiguity set.}
\label{fig:lognormal}
\end{figure}

Figure~\ref{fig:lognormal} visualizes the performance of the three methods. The benchmark method by \cite{quick_res}, designed for a uniform distribution, performs the poorest under the Lognormal distribution. As shown in Figure~\ref{fig:lognormal}(a), it yields aggressive policies, failing to adapt to the right-skewed nature of the demand where lower outcomes are more probable. This optimistic miscalculation leads to negative consequences: the benchmark model not only yields the lowest expected profit but also incurs the highest WTC ratio due to significant overproduction.

In contrast, the Wasserstein model demonstrates remarkable adaptability. By learning from the samples without imposing specific structural assumptions, it accurately captures the underlying skewness of the demand. This leads to a well-calibrated, more conservative policy that proves to be superior to the benchmark method across all metrics: The Wasserstein model simultaneously achieves higher expected profits and lower WTC ratios. the MAD DRO model also delivers an attractive performance, outperforming the benchmark significantly. This outcome can be attributed to the nature of its ambiguity set. The MAD set provides robustness against all distributions with a given mean and MAD, therefore also yields high-quality solutions for the lognormal distribution. These compelling results suggest that improving environmental performance and maximizing profit are not always a trade-off: a more accurate, robust approach may lead to a `win-win' outcome. 


\subsubsection{Beta distribution}
Finally, we evaluate the models under a $Beta(2,5)$ distribution. The Beta distribution is known for its remarkable flexibility, capable of modeling a wide variety of distributional shapes, including skewed and asymmetric patterns. Due to this versatility, it has been widely applied in many operations management problems to depict uncertain quantities, such as the duration of activities in project management~\citep{malcolm1959application}, uncertain response rates in marketing models~\citep{lilien1981bayesian}, or the asymmetry and ambiguous demand in the Newsvendor problem~\citep{natarajan2018asymmetry}.

Unlike the previous scenarios, the $Beta(2, 5)$ distribution possesses a distinctly different mean, variance, and shape compared to the uniform benchmark. This experimental setup, therefore, simulates a scenario of severe model misspecification, where the underlying demand reality diverges significantly from the idealized assumptions of the benchmark model. The objective of this final test is to assess the resilience of each approach under complicated distributions and to determine if our DRO frameworks can still deliver reliable performance.

\begin{figure}[h]
\label{fig:uniform_test_no_wtc}
\centering
\begin{subfigure}[t]{0.32\textwidth}
\centering
\includegraphics[width=1.0\textwidth]{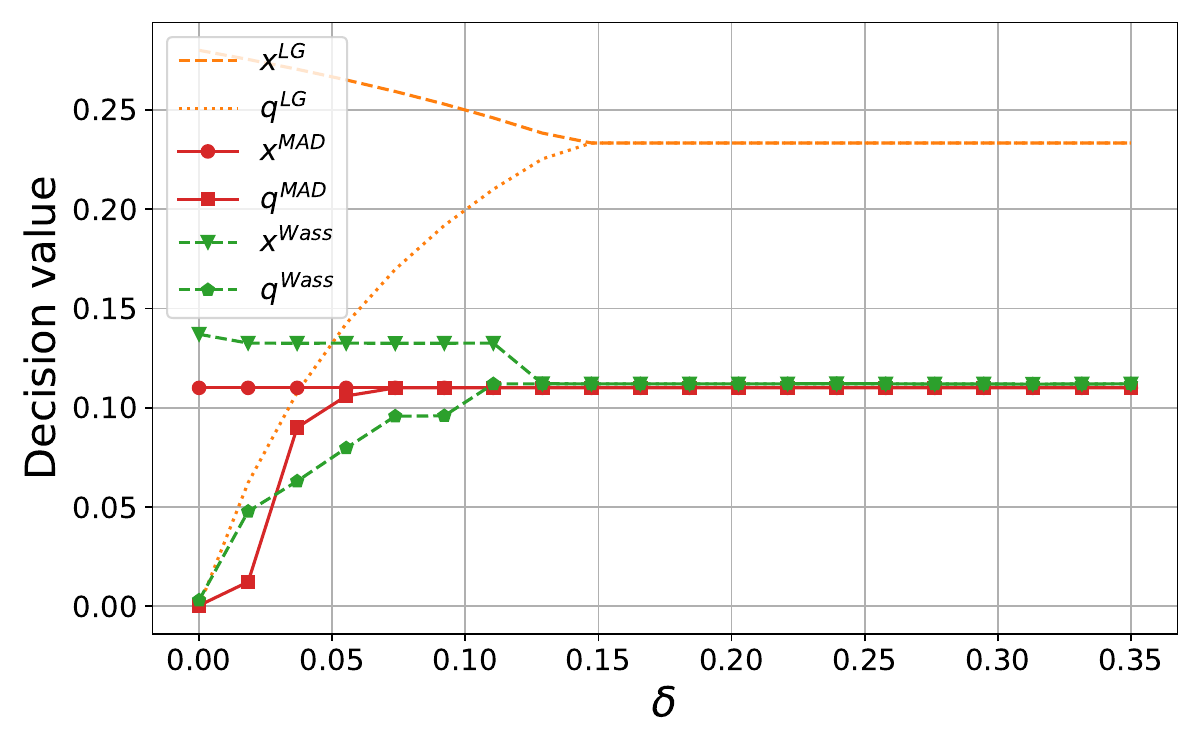}
\caption{Optimal policy}
\end{subfigure}
\begin{subfigure}[t]{0.32\textwidth}
\centering
\includegraphics[width=1.0\textwidth]{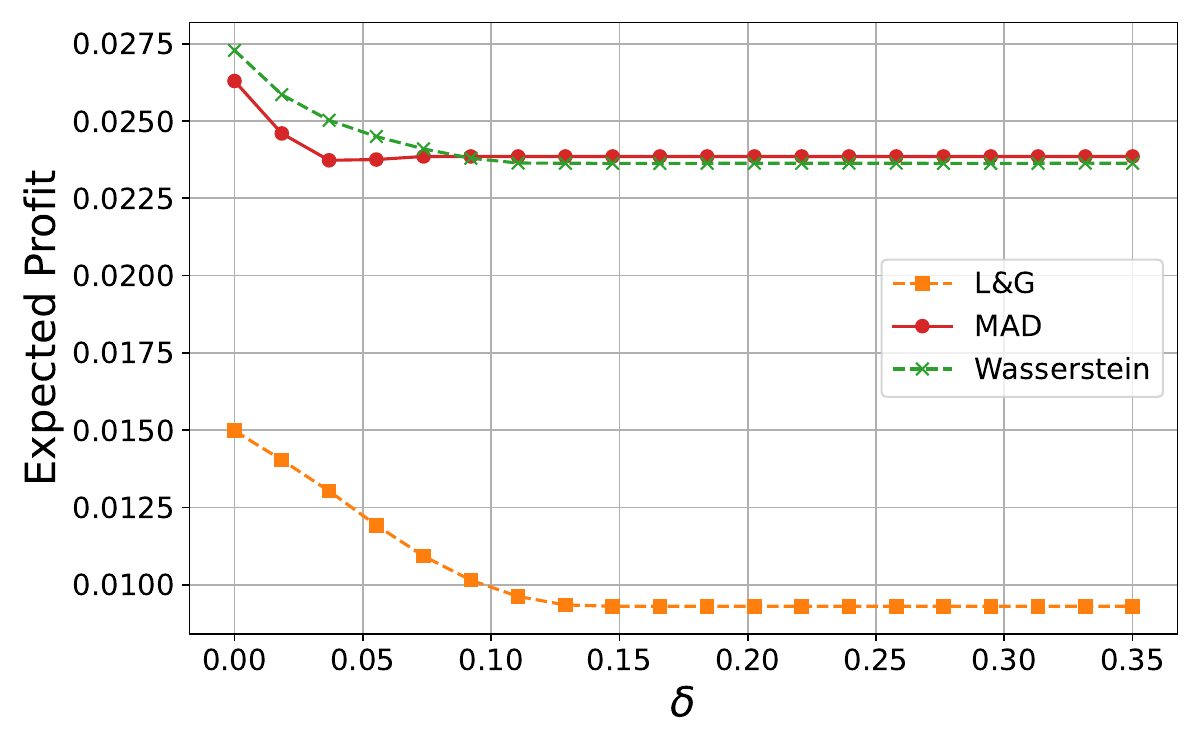}
\caption{Expected profit}
\end{subfigure}
\begin{subfigure}[t]{0.32\textwidth}
\centering
\includegraphics[width=1.0\textwidth]{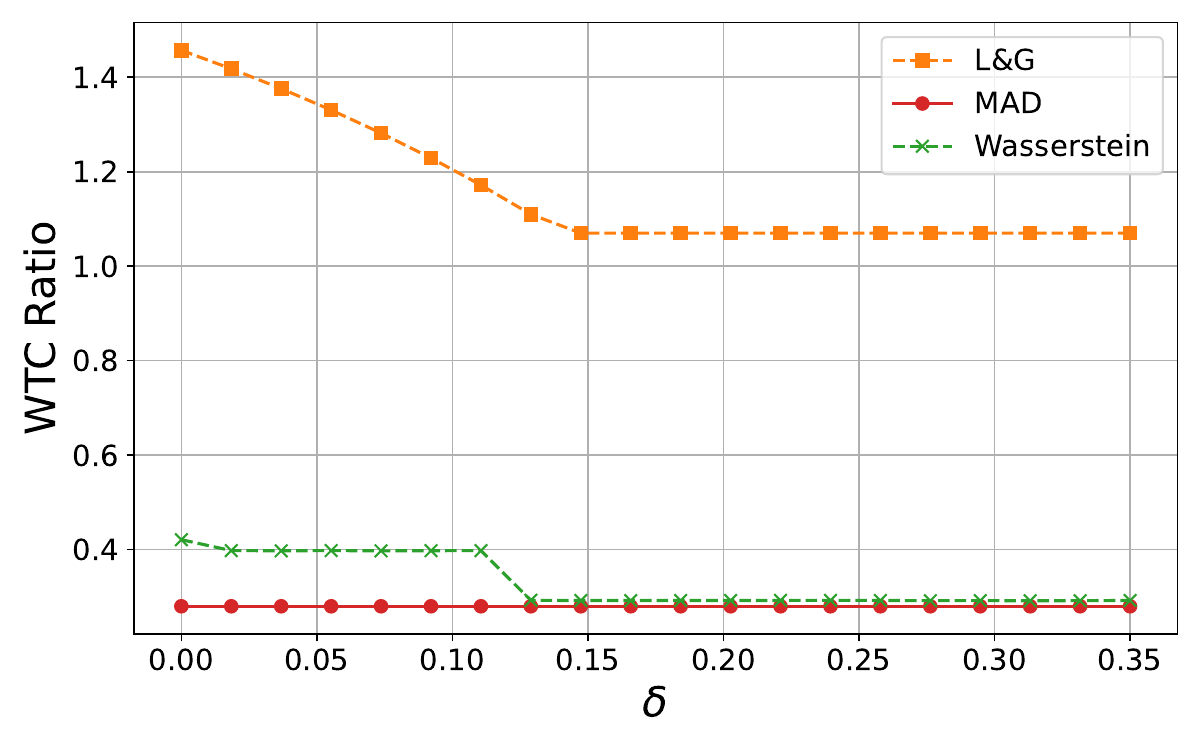}
\caption{WTC ratio}
\end{subfigure}
\caption{A comparison of optimal policies, expected profits and WTC ratios for our DRO models against the benchmark by~\cite{quick_res} when the true demand distribution is a Beta distribution. Across all subfigures, the orange line represents~\cite{quick_res} model, the red line represents our DRO model with a MAD ambiguity set, and the green line represents our DRO model with a Wasserstein ambiguity set.}
\label{fig:beta}
\end{figure}

Figure~\ref{fig:beta} presents the results under the Beta distribution. Notably, the improvements achieved by the DRO models are even more pronounced here than in the Lognormal experiment. For instance, when $\delta=0.1$, the Wasserstein model achieves an expected profit approximately \(141\%\) higher than the benchmark method. This substantial financial gain is accompanied by an equally impressive environmental performance, with its WTC ratio being \(67\%\) smaller than the benchmark's. The MAD DRO model improves profit by \(141\%\) and reduces the WTC ratio by \(77\%\) of the benchmark's level at the same $\delta$ value.

These results provide compelling evidence that as the true demand distribution deviates further from simple, idealized forms, the performance of the benchmark model deteriorates rapidly. Its rigid, assumption-driven policy leads to catastrophic financial losses and excessive waste. In contrast, our data-driven DRO models deliver policies that are both more profitable and sustainable.  This suggests that for the vast majority of real-world demand profiles, which are unlikely to be simple uniform distributions, our distributionally robust approach can serve as an effective tool for socially responsible decision-making.

The examination of different distributions above shows that the DRO approaches often lead to a simultaneous improvement in both profit and environmental performance, effectively reducing waste as a byproduct of robust decision-making. However, a key limitation remains: the decision-maker still lacks a direct mechanism to actively control the waste level to meet specific environmental targets. The observed waste reduction is an emergent property, not an explicit guarantee. To address this gap and provide managers with a more powerful tool for responsible operations, we now introduce an explicit constraint on the WTC ratio into our framework. The subsequent analysis will explore how this constraint impacts the system's optimal policies and trade-offs, allowing for a deeper understanding of how firms can balance profitability with verifiable environmental performance.

\subsection{Quick response or not: a revisit with WTC ratio constraint}

The recent study by~\cite{quick_res} raises a critical concern that implementing a quick response system, while profitable, may be environmentally detrimental by increasing total system waste. Therefore, their finding leaves a critical, two-fold question largely unexplored in the literature: First, can this adverse environmental impact be mitigated by incorporating explicit control mechanisms? And second, if it can, does a constrained quick response system still deliver a profit advantage over a traditional, non-flexible system? This section is therefore dedicated to bridging this gap. We revisit the `quick response or not' debate, but now through the lens of a system equipped with our proposed WTC ratio constraint.

To isolate the structural impact of the WTC constraint, we shift our focus from distributional ambiguity to the model's intrinsic properties. The goal here is not to compare the performance of different robust models, but to achieve a deeper, more fundamental understanding of how the WTC constraint reshapes the quick response system itself. Therefore, in this subsection, we assume the decision maker has access to the true demand distribution, which we take to be the Lognormal distribution from Section~\ref{sec:lognormal}. This allows us to accurately solve the problem via sample average approximation with a large number of samples,  ensuring our analysis is not confounded by estimation errors from small datasets. As a first step, we will re-examine the baseline case by comparing a standard quick response system against a no quick response system, both in the absence of the WTC constraint.

\begin{figure}[h]
\centering
\begin{subfigure}[t]{0.32\textwidth}
\centering
\includegraphics[width=1.0\textwidth]{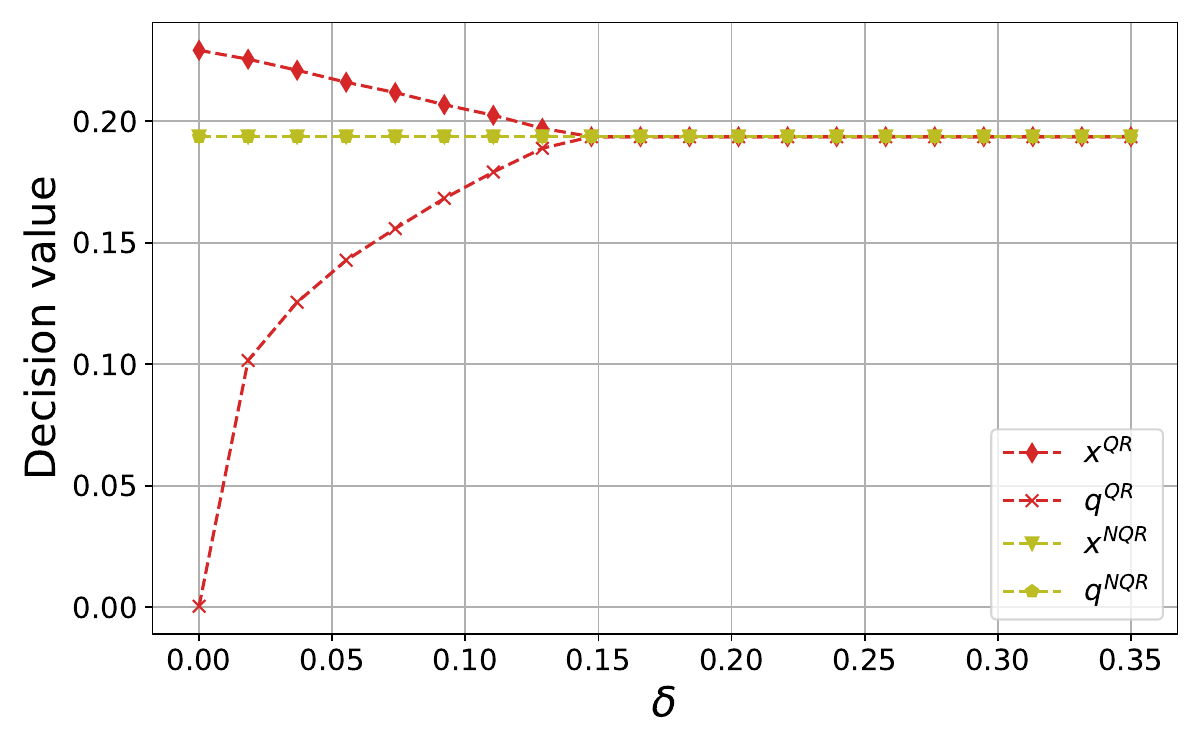}
\caption{Optimal policy}
\end{subfigure}
\begin{subfigure}[t]{0.32\textwidth}
\centering
\includegraphics[width=1.0\textwidth]{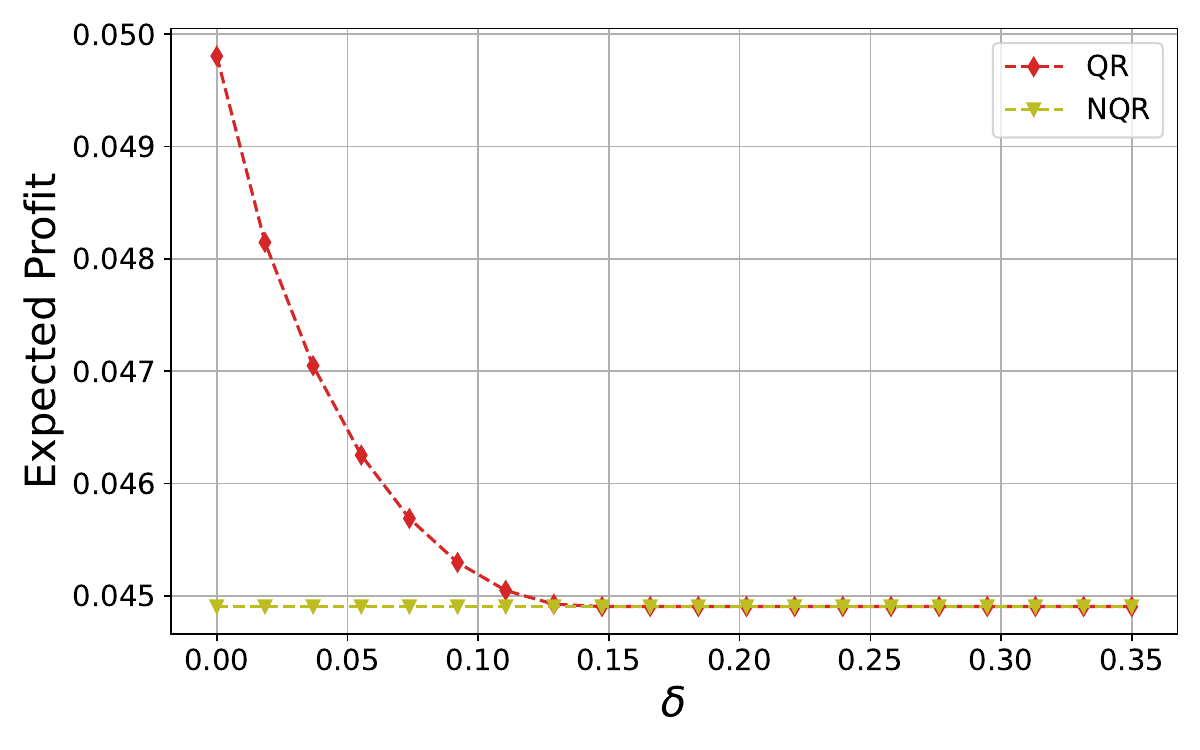}
\caption{Expected profit}
\end{subfigure}
\begin{subfigure}[t]{0.32\textwidth}
\centering
\includegraphics[width=1.0\textwidth]{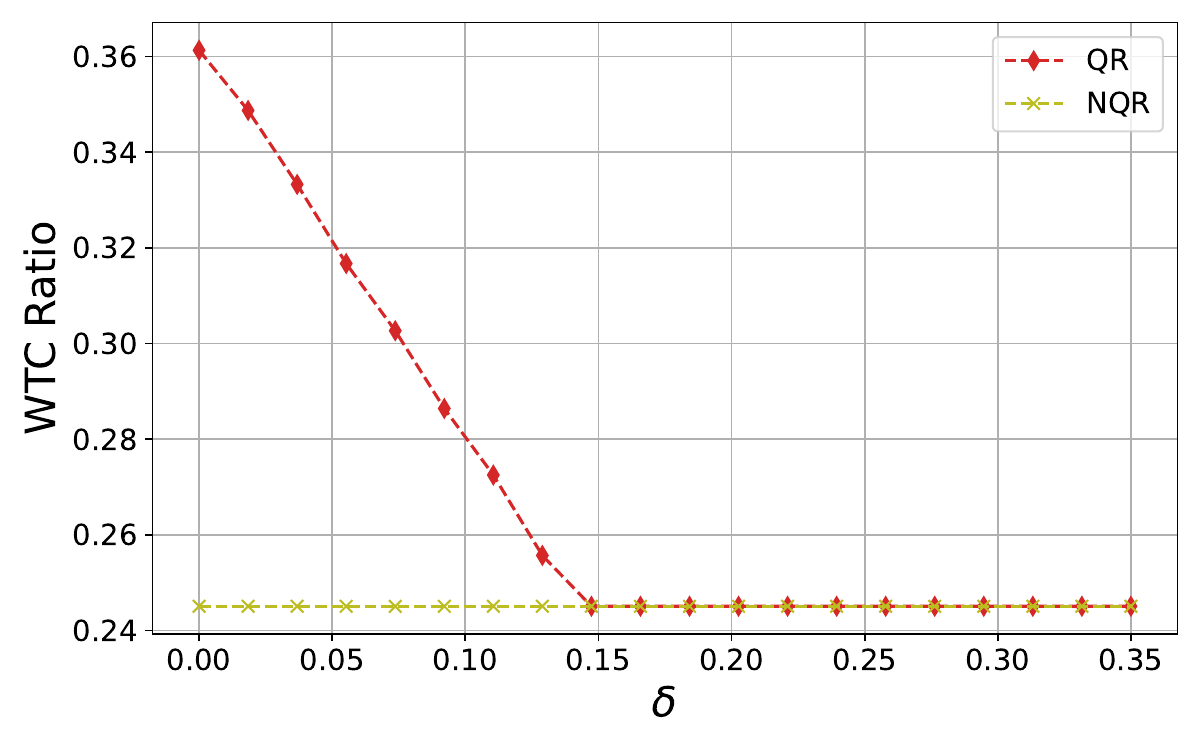}
\caption{WTC ratio}
\end{subfigure}
\caption{A comparison of optimal policies, expected profits, and WTC ratios between the unconstrained quick response (QR) system and the no quick response (NQR) system. Across all subfigures, the red line denotes the QR system, and the yellow line represents the NQR system.}
\label{fig:QR_NQR_nowtc}
\end{figure}

Figure~\ref{fig:QR_NQR_nowtc} presents the performance comparison between the quick response and no quick response systems. When the extra cost of quick production $\delta$ is low, the quick response model optimally prescribes a higher raw material procurement level and delays a portion of production, achieving higher expected profit. However, this profitability comes at a clear environmental cost. The increased inventory directly translates into a higher WTC ratio for low $\delta$ values. These results illustrate a positive correlation between profit gains and waste generation in the unconstrained quick response system, which confirms and extends the foundational insights of~\cite{quick_res} from the uniform distribution assumption to a broader class of distributions.

We now proceed to examine the quick response model with our proposed WTC ratios constraint. We will gradually tighten this environmental constraint by varying the value of $\tau$.

\begin{figure}[h]
\centering
\begin{subfigure}[t]{0.32\textwidth}
\centering
\includegraphics[width=1.0\textwidth]{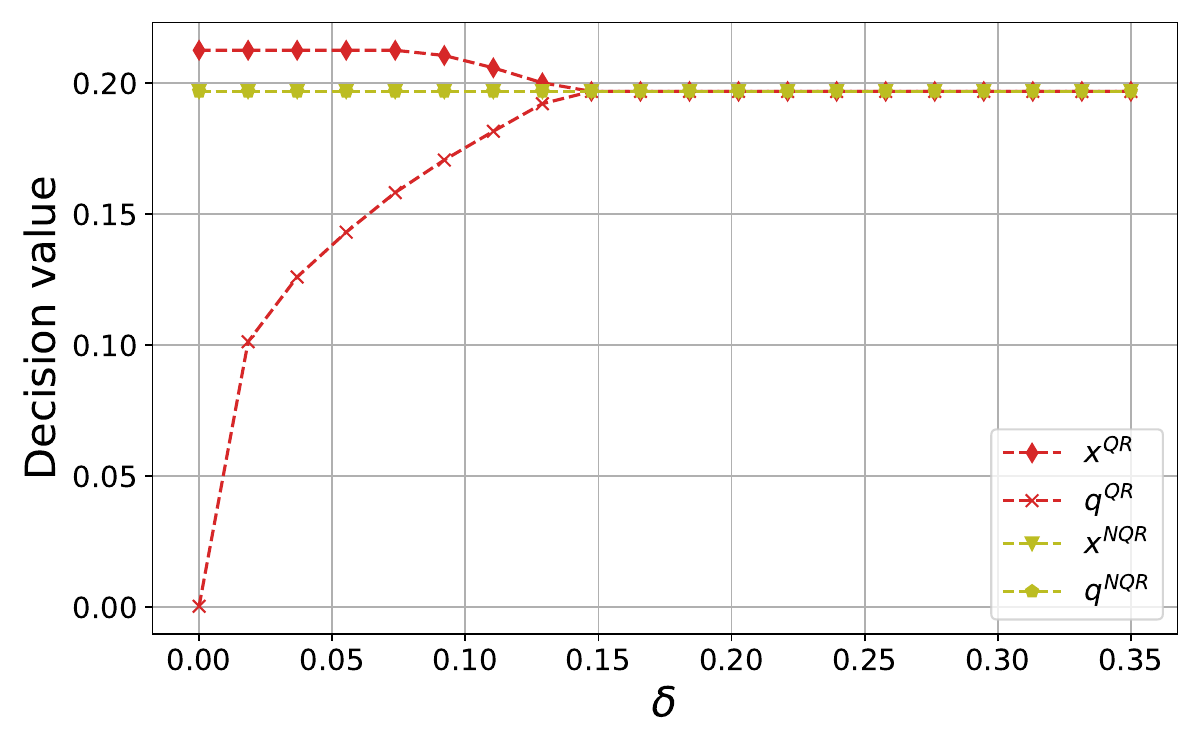}
\caption{Optimal policy ($\tau=0.3$)}
\end{subfigure}
\begin{subfigure}[t]{0.32\textwidth}
\centering
\includegraphics[width=1.0\textwidth]{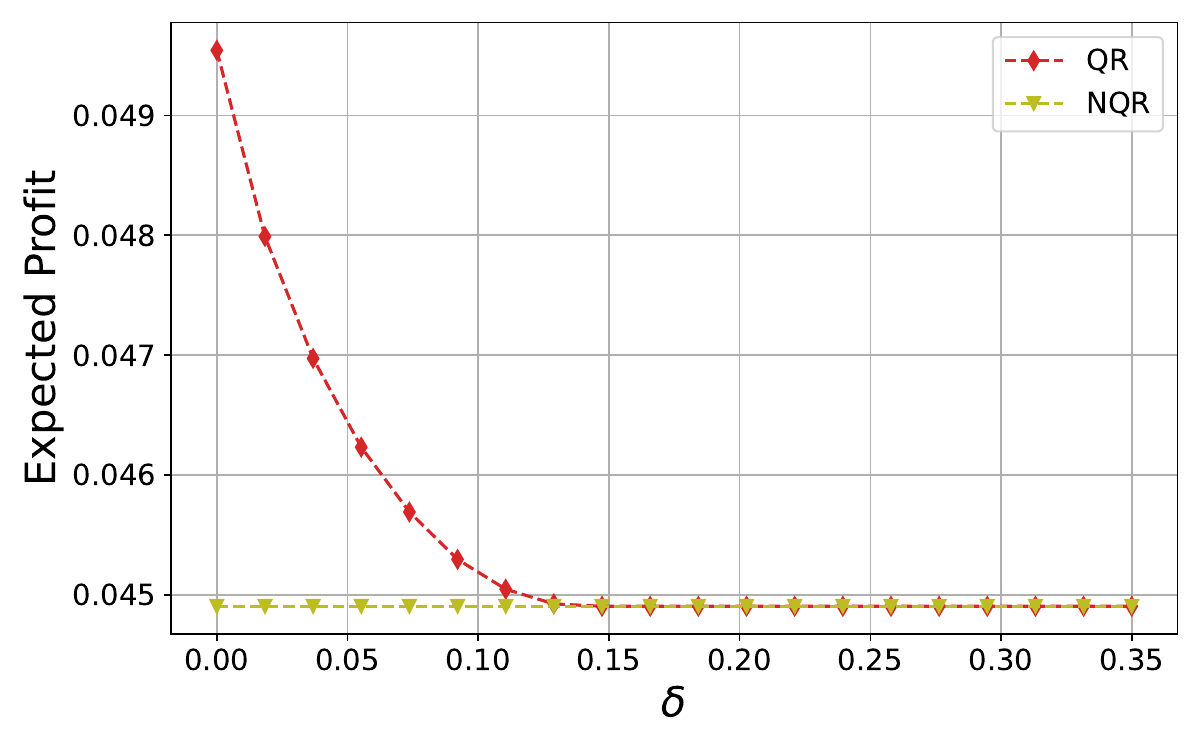}
\caption{Expected profit ($\tau=0.3$)}
\end{subfigure}
\begin{subfigure}[t]{0.32\textwidth}
\centering
\includegraphics[width=1.0\textwidth]{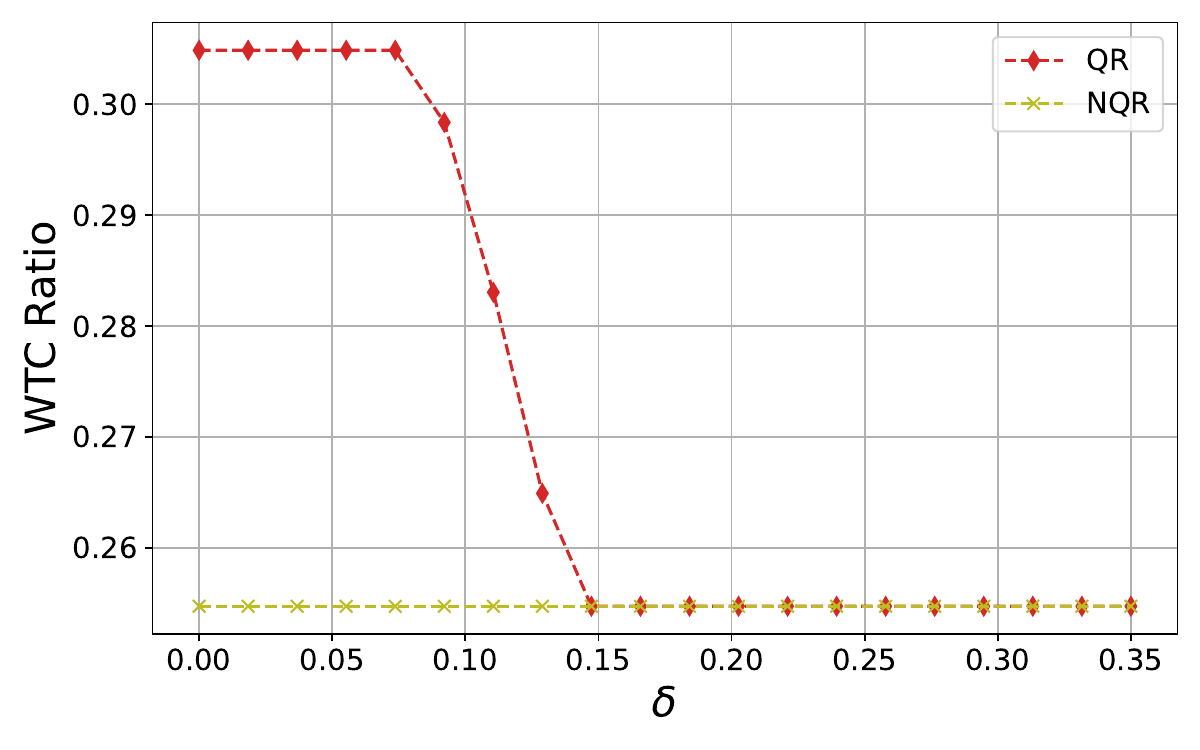}
\caption{WTC ratio ($\tau=0.3$)}
\end{subfigure}
\begin{subfigure}[t]{0.32\textwidth}
\centering
\includegraphics[width=1.0\textwidth]{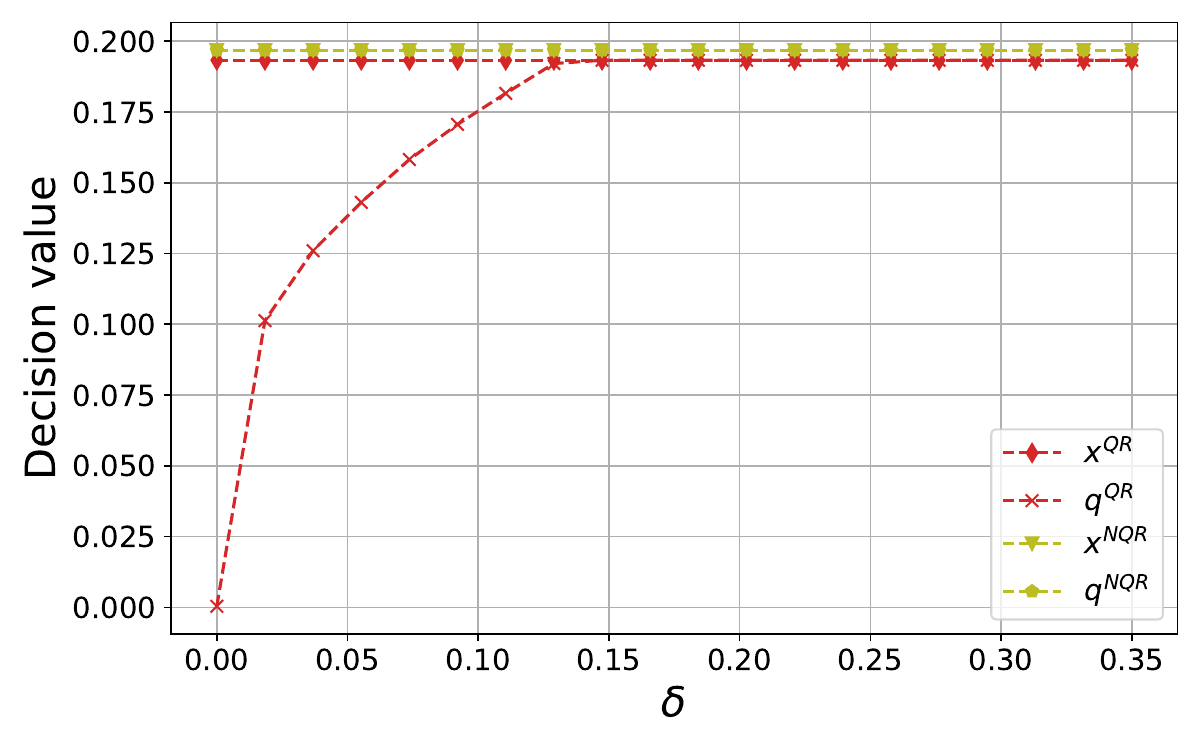}
\caption{Optimal policy ($\tau=0.24$)}
\end{subfigure}
\begin{subfigure}[t]{0.32\textwidth}
\centering
\includegraphics[width=1.0\textwidth]{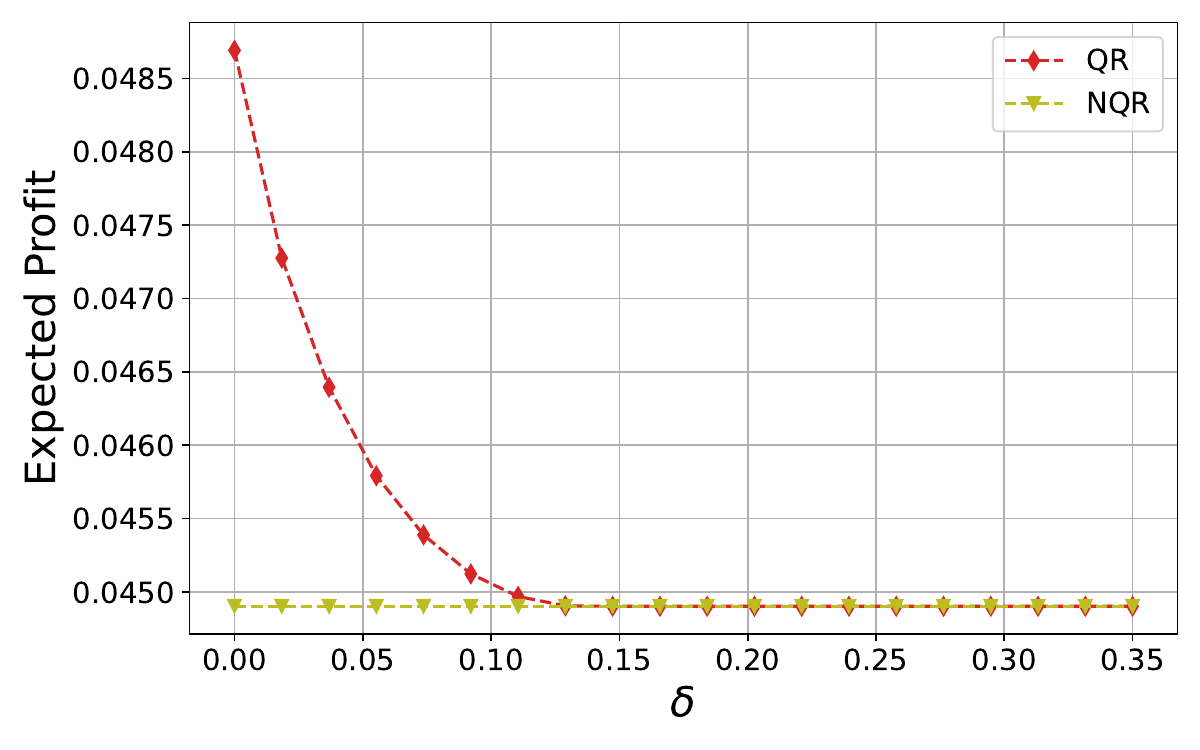}
\caption{Expected profit ($\tau=0.24$)}
\end{subfigure}
\begin{subfigure}[t]{0.32\textwidth}
\centering
\includegraphics[width=1.0\textwidth]{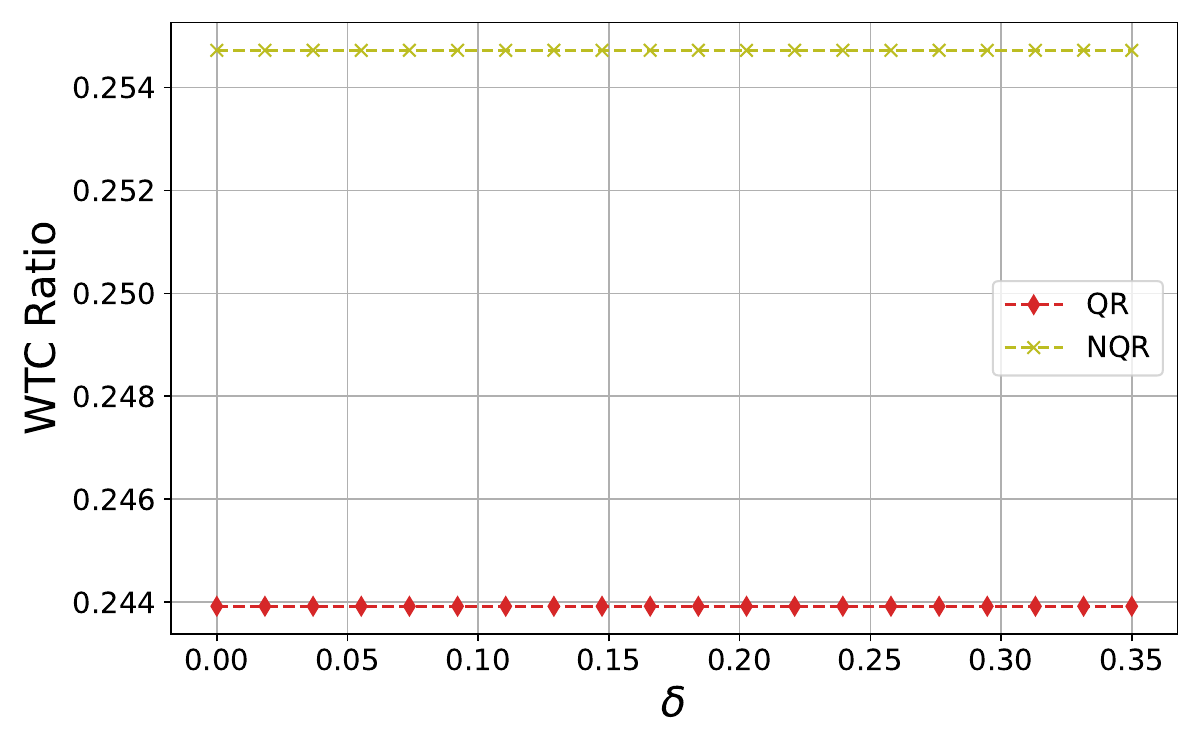}
\caption{WTC ratio ($\tau=0.24$)}
\end{subfigure}
\caption{A comparison of optimal policies, expected profits, and WTC ratios between the WTC-ratio-constrained quick response (QR) system and the no quick response (NQR) system. Across all subfigures, the red line denotes the QR system, and the yellow line represents the NQR system.}
\label{fig:QR_NQR_wtc}
\end{figure}

Figure~\ref{fig:QR_NQR_wtc} visualizes the impact of the WTC constraint on the quick response system. When a moderate constraint of $\tau=0.3$ is imposed, the raw material procurement policy is visibly reined in. For instance, as shown in Figure~\ref{fig:QR_NQR_wtc}(a), $x^{QR}$ is capped at approximately 0.21, lower than the level observed in the unconstrained case. This successfully constrained strategy allows the quick response system to maintain a significantly higher profit than the no quick response system while guaranteeing that its environmental impact remains the specified threshold $\tau$. The results become even more compelling under a stricter environmental target of $\tau=0.24$. In this scenario, the constrained quick response system not only produces less waste than the no quick response system, but also offers higher expected profit when the extra cost $\delta$ is low. 

To understand the mechanism behind this `win-win' outcome, it is crucial to investigate the sources of the quick response system's extra profit. This improved profit stems from two distinct sources: (i) \emph{Upstream flexibility from raw material procurement}: the ability to procure extra raw material allows the firm to meet high demand realizations through second-stage production, (ii) \emph{Downstream protection from production postponement}: the ability to delay a portion of manufacturing allows the firm to avoid sunk production costs when demand turns out to be low. From Figure~\ref{fig:QR_NQR_wtc}(a) and (d), one can easily notice that the extra waste incurred by the quick response system originates entirely from the first source (excess procurement), while the second source (production postponement) provides a pure cost-saving benefit with no environmental penalty.

This insight provides a clear resolution to the `quick response or not' debate. By integrating the WTC ratio constraint, our framework provides an effective tool to control raw material procurement according to a firm's environmental goals. This transforms a quick response from a potential environmental paradox into a powerful and flexible tool for achieving both economic and sustainable goals. And in this case, for any given environmental target~$\tau$, the WTC-ratio-constrained quick response system will always be a superior strategy to the no quick response alternative.

\subsection{WTC constrained DRO models}

Having established the value of the WTC ratio constraint under perfect distributional knowledge, we now integrate it into our DRO models to test their performance in a practical, data-driven setting. This allows us to evaluate the effectiveness of our complete framework under the dual challenges of distributional ambiguity and explicit environmental control. We again use the Lognormal demand distribution from Section~\ref{sec:lognormal} and a small sample size of $N=10$.

\begin{figure}[h]
\centering
\begin{subfigure}[t]{0.32\textwidth}
\centering
\includegraphics[width=1.0\textwidth]{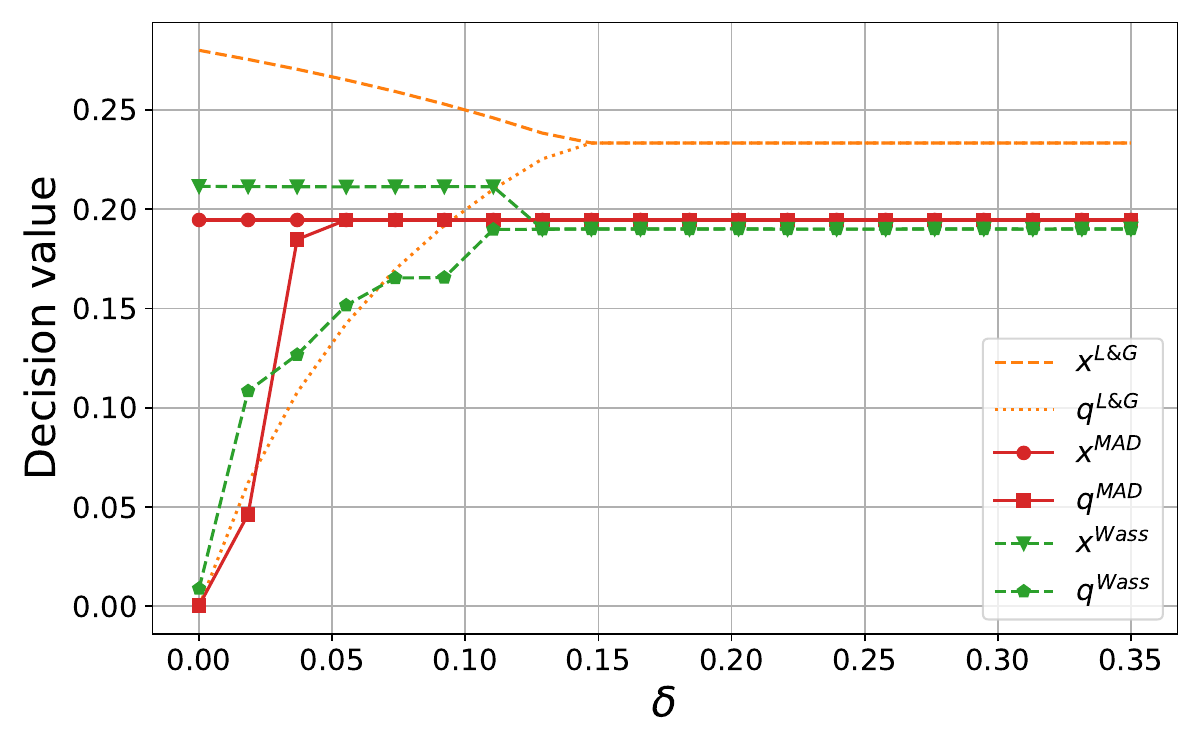}
\caption{Optimal policy ($\tau=0.4$)}
\end{subfigure}
\begin{subfigure}[t]{0.32\textwidth}
\centering
\includegraphics[width=1.0\textwidth]{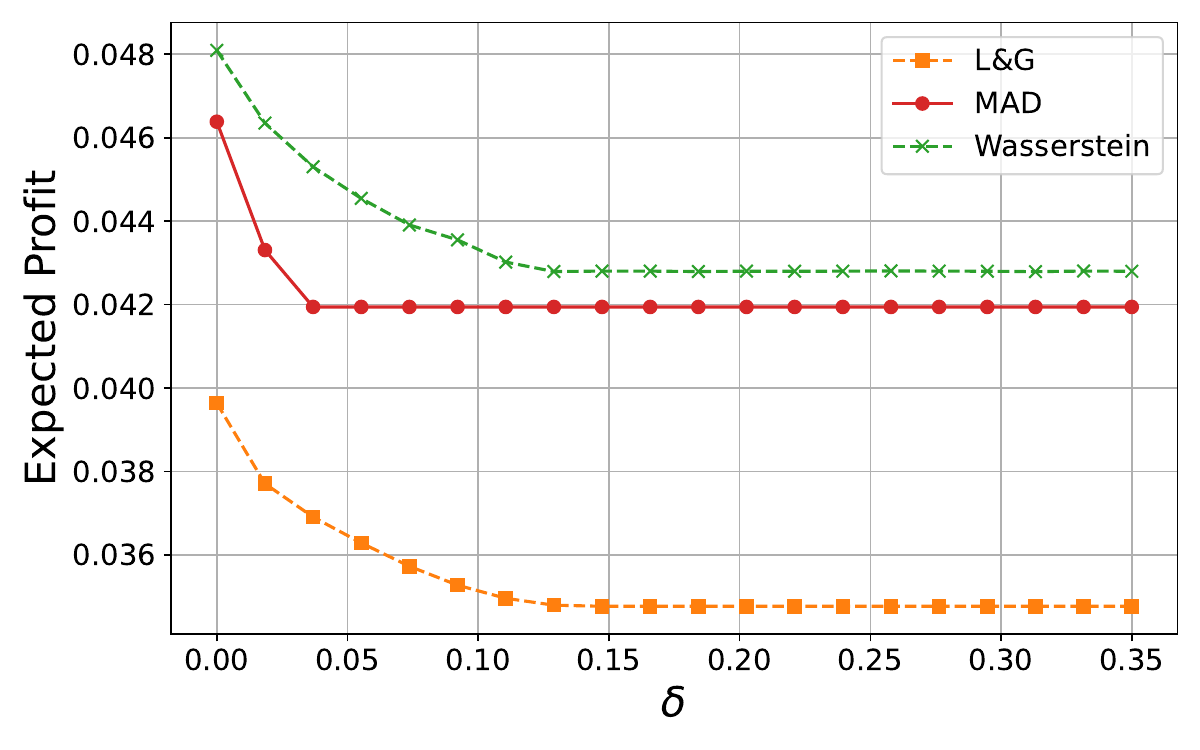}
\caption{Expected profit ($\tau=0.4$)}
\end{subfigure}
\begin{subfigure}[t]{0.32\textwidth}
\centering
\includegraphics[width=1.0\textwidth]{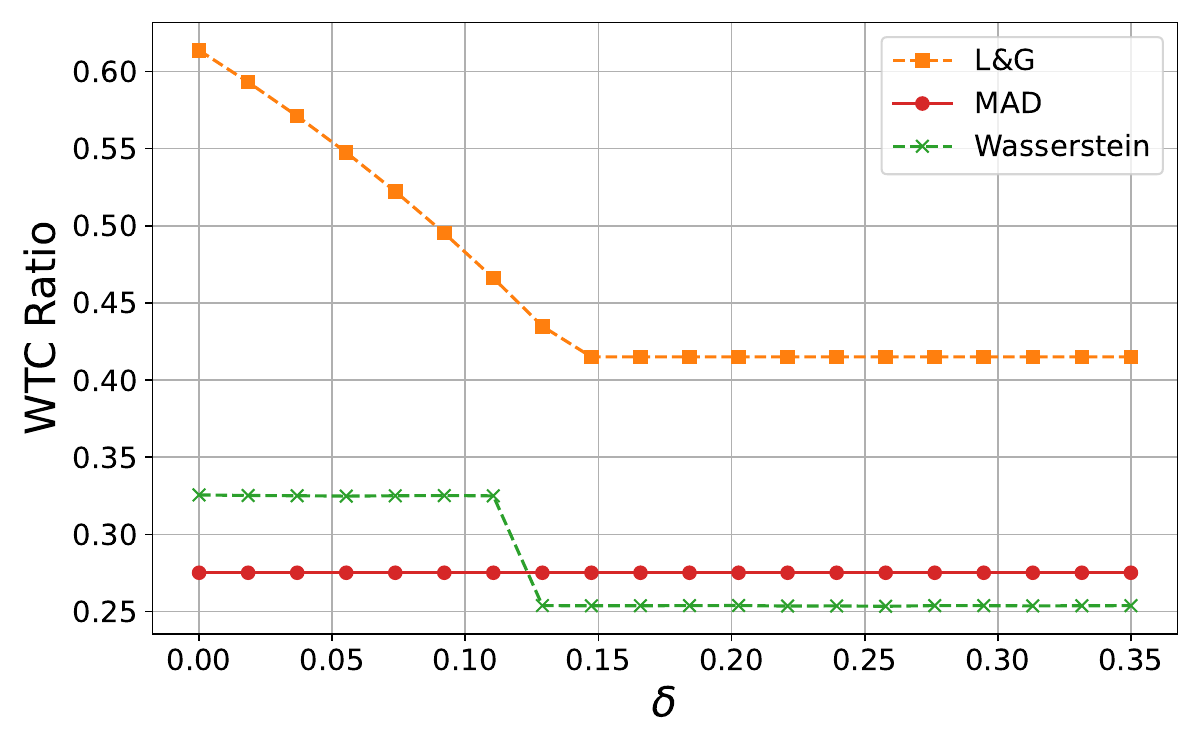}
\caption{WTC ratio ($\tau=0.4$)}
\end{subfigure}
\begin{subfigure}[t]{0.32\textwidth}
\centering
\includegraphics[width=1.0\textwidth]{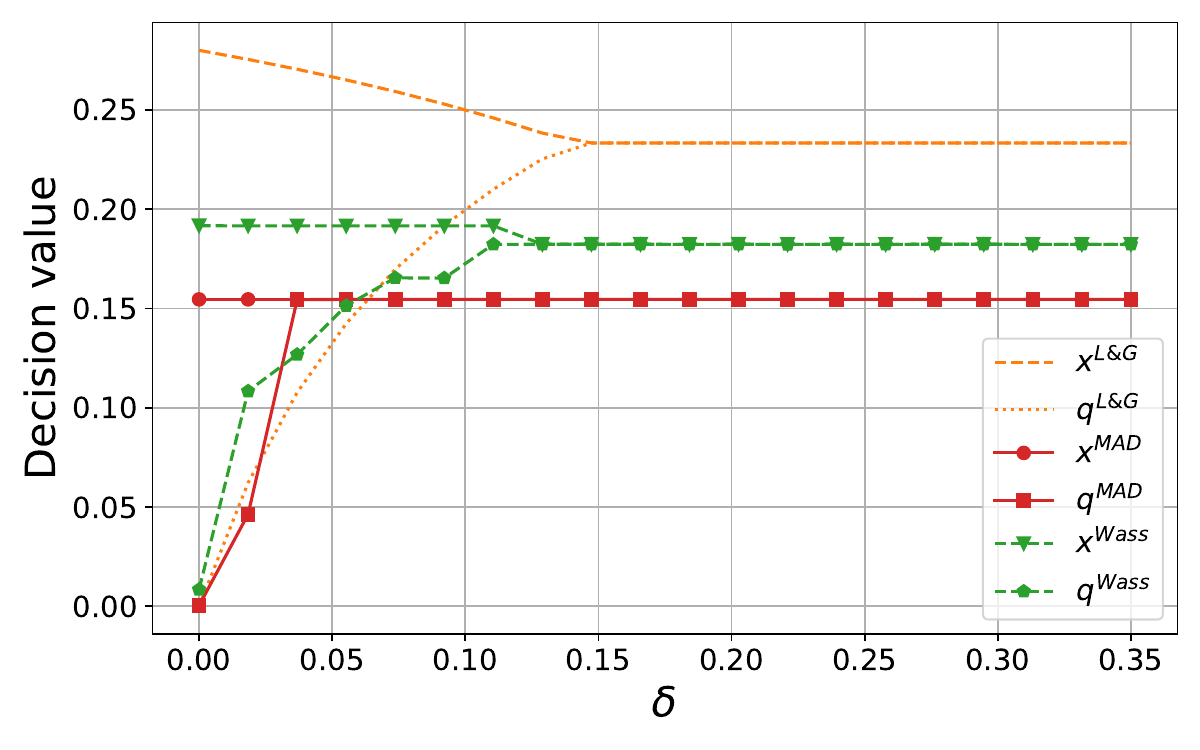}
\caption{Optimal policy ($\tau=0.3$)}
\end{subfigure}
\begin{subfigure}[t]{0.32\textwidth}
\centering
\includegraphics[width=1.0\textwidth]{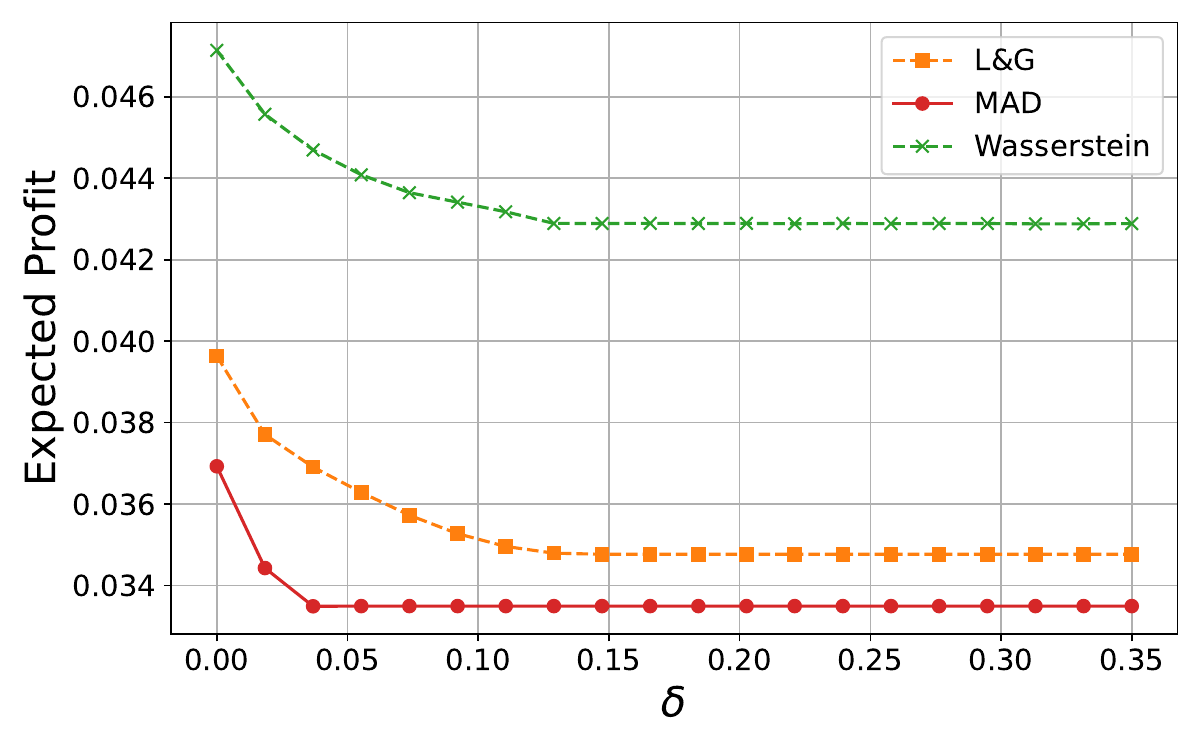}
\caption{Expected profit ($\tau=0.3$)}
\end{subfigure}
\begin{subfigure}[t]{0.32\textwidth}
\centering
\includegraphics[width=1.0\textwidth]{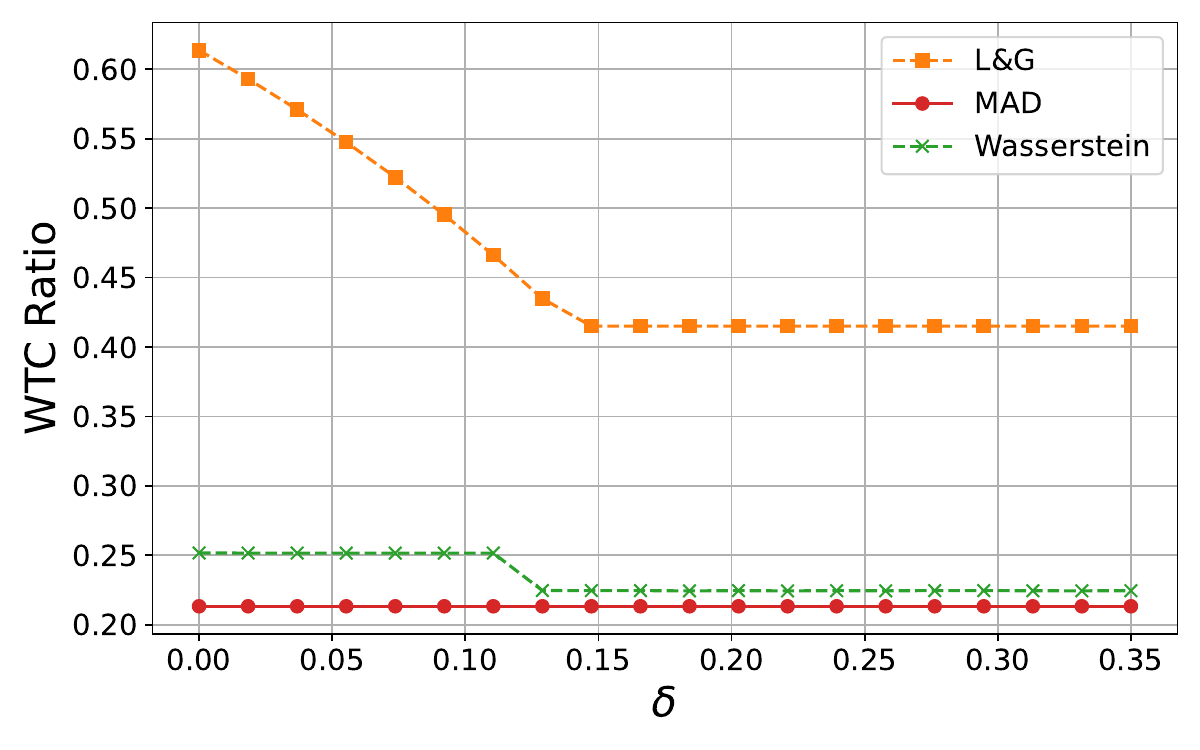}
\caption{WTC ratio ($\tau=0.3$)}
\end{subfigure}
\begin{subfigure}[t]{0.32\textwidth}
\centering
\includegraphics[width=1.0\textwidth]{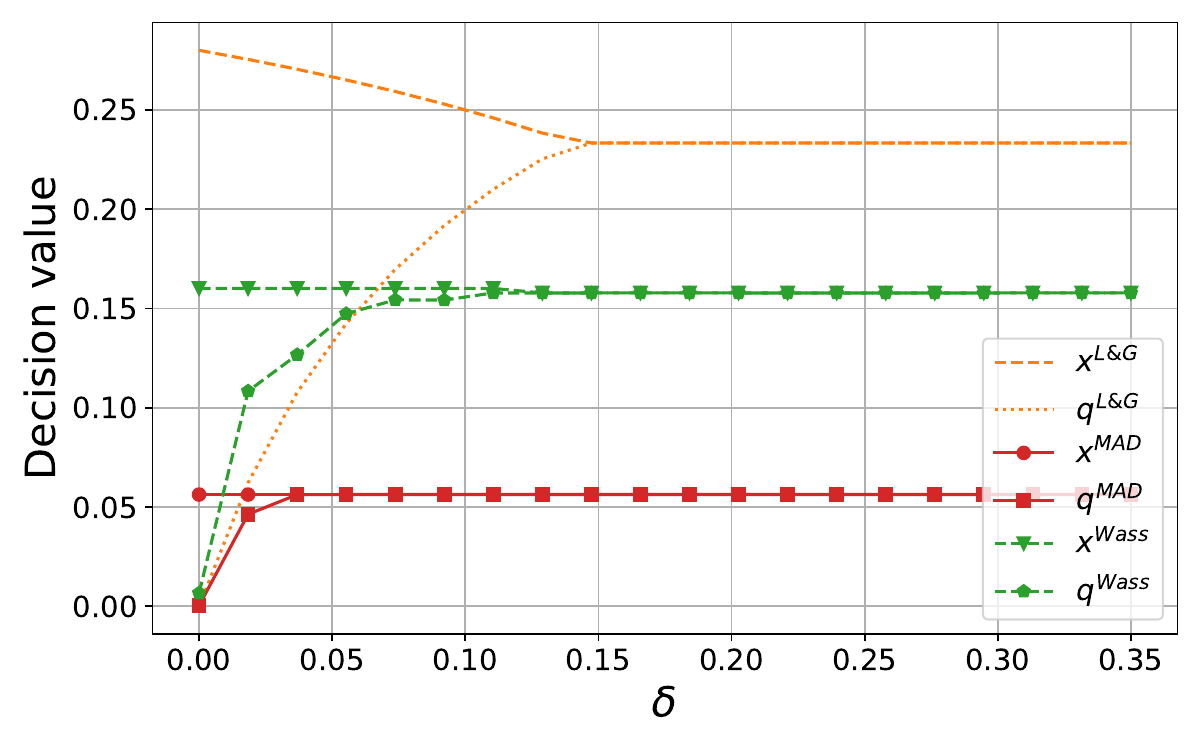}
\caption{Optimal policy ($\tau=0.2$)}
\end{subfigure}
\begin{subfigure}[t]{0.32\textwidth}
\centering
\includegraphics[width=1.0\textwidth]{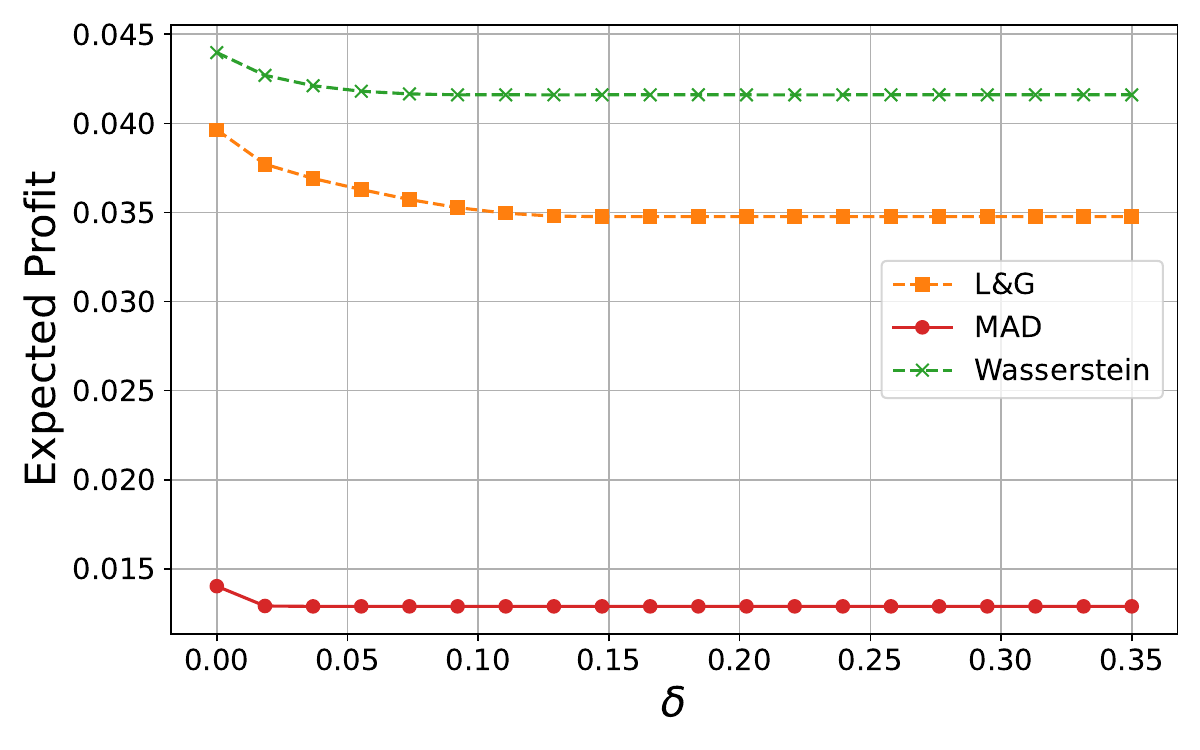}
\caption{Expected profit ($\tau=0.2$)}
\end{subfigure}
\begin{subfigure}[t]{0.32\textwidth}
\centering
\includegraphics[width=1.0\textwidth]{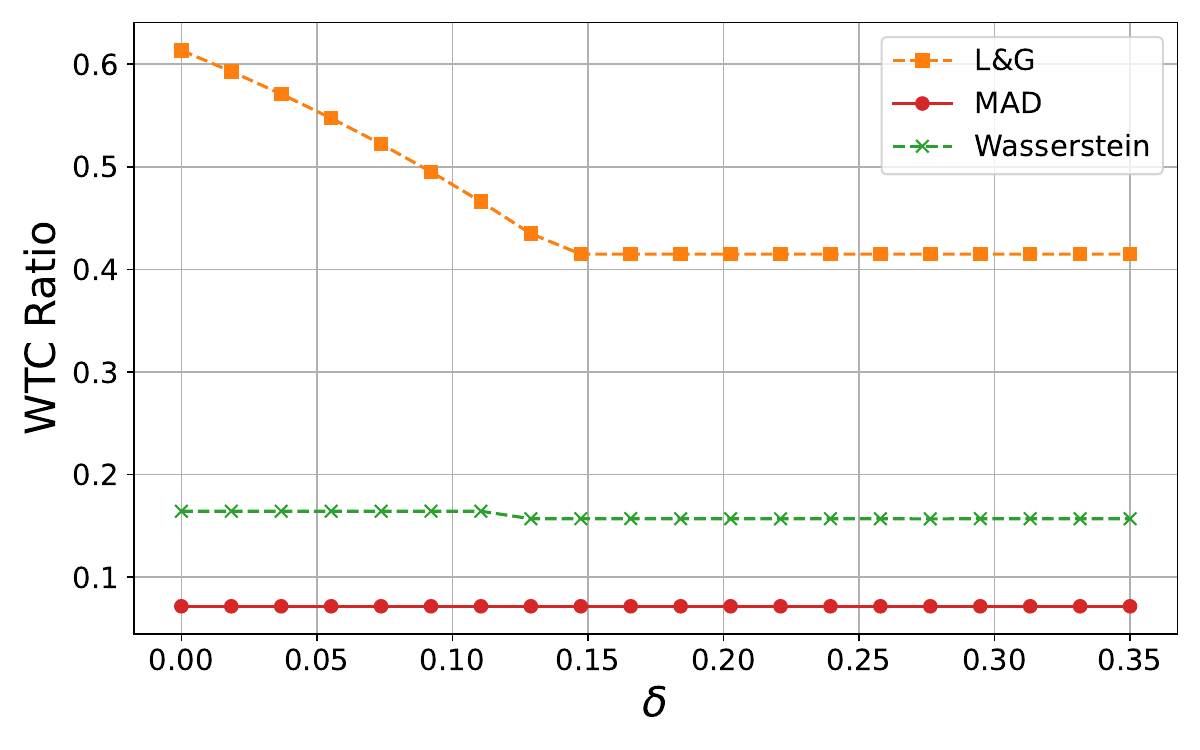}
\caption{WTC ratio ($\tau=0.2$)}
\end{subfigure}
\caption{A comparison of optimal policies, expected profits, and WTC ratios for our DRO models against the benchmark by~\cite{quick_res} with different environmental target $\tau$. Across all subfigures, the orange line represents~\cite{quick_res} model, the red line represents our DRO model with a MAD ambiguity set, and the green line represents our DRO model with a Wasserstein ambiguity set.}
\label{fig:DRO_constrained_all}
\end{figure}

Figure~\ref{fig:DRO_constrained_all} demonstrates the effectiveness of WTC-ratio-constrained DRO models in a data-driven setting. A primary observation across both the Wasserstein and MAD is that the WTC constraint is highly effective. As the environmental target $\tau$ is tightened, the resulting out-of-sample WTC ratio systematically decreases. Notably, for any given $\tau$, the observed WTC ratio is even strictly lower than the target value itself. This is a direct consequence of the DRO formulation, which ensures the constraint holds for the worst-case distribution within the ambiguity set, thus providing a robust buffer and guaranteeing that the environmental target is satisfied in the out-of-sample circumstances. 

Meanwhile, we also notice different trade-off profiles for the two DRO models. When the environmental target is relatively lenient, both the Wasserstein and MAD models achieve a dominant outcome, delivering higher profits and lower waste compared to the benchmark method. As the constraint becomes more stringent, the conservative nature of the MAD model becomes apparent.  It achieves the lowest possible WTC ratio when $\tau=0.2$,  but at the cost of a significant reduction in profit. This is because its moment-based ambiguity set forces it to hedge against a wide range of pathological distributions. On the contrary, the Wasserstein model exhibits a superior ability to balance these objectives. Even under the strictest constraint, it maintains the highest expected profit,  while simultaneously reducing the WTC ratio to less than 50\% of the benchmark's level.

Combining the results from this section with those of Section~\ref{sec:diff_dist}, we can draw a more nuanced conclusion about the two DRO models. Both approaches demonstrate a powerful ability to deliver robust performance under distributional ambiguity, consistently outperforming the non-robust benchmark. However, they are suited for different contexts. The MAD model excels in unconstrained or leniently constrained environments. In these settings, it provides a significant improvement over the benchmark with minimal modeling complexity. In contrast, the Wasserstein DRO model proves to be more versatile. It not only performs exceptionally well in unconstrained or leniently constrained settings but also achieves significant improvements over the benchmark methods under stringent environmental targets. Therefore, the Wasserstein model could be a preferred approach for firms with strict sustainability goals.

\section{Conclusion}
\label{sec:concl}
This paper addressed the `quick response or not' debate, a critical issue spurred by a recent finding that quick response strategies can paradoxically increase waste. We resolve this challenge by developing a distributionally robust optimization model with an explicit waste-to-consumption ratio constraint. This combined approach provides resilience against ambiguous demand while endowing managers with direct control over their environmental impact. The key takeaway for managers is that profitability and sustainability are not necessarily a trade-off. Our analysis demonstrates that by implementing a responsibly managed quick response system, firms can achieve higher profitability with a verifiably lower environmental impact than a traditional, non-flexible alternative.

For future work, it would be interesting to extend this framework to incorporate upcycling strategies, where leftover material or unsold garments are reused or sold to third parties. This would involve modeling the effects of such recovery processes on both the firm's profit and the statistical properties of demand (e.g., how the mean and variance are altered). Furthermore, distinguishing between the value and treatment cost of materials in their raw (e.g., fabric) versus final (e.g., garment) form could shed light on optimal recycling or resale policies under uncertainty.

\newpage
\bibliographystyle{plainnat}
\bibliography{bibliography}

\clearpage
\begin{APPENDICES}
\setlength{\parskip}{1em}
\section{Proofs and auxiliary theoretical results.}

\begin{proof}{Proof of Proposition \ref{prop:piecewise_affine}}
From \eqref{q_delta_explic}, we have:
\[
q_\delta(x, q, y) =
\begin{cases}
 d_y  - q & \textup{if } d_y - q \geq 0 \textup{ and } d_y - q \leq x - q, \\
x - q & \textup{if } d_y - q \geq 0 \textup{ and } d_y - q > x - q, \\
0 & \textup{if } d_y - q < 0 \textup{ and } 0 \leq x - q, \\
x - q & \textup{if }  d_y - q < 0 \textup{ and }0 > x - q, \quad \textup{(rejected since } x \geq q).
\end{cases}
\]
Summarizing the cases, we obtain the following equivalent expression: 
\[
q_\delta(x, q, y) =
\begin{cases}
0 & \text{if } d_y < q,\\
d_y - q & \text{if } q \leq d_y \leq x, \\
x - q & \text{if }  x < d_y. 
\end{cases}
\]
We next analyze the term $\min \left\{ d_y, q + q_\delta(x, q, y) \right\}$ appearing in the profit function \eqref{eq:profit_function}:
\[
\min \left\{ d_y, q + q_\delta(x, q, y) \right\} =
\begin{cases}
d_y & \text{if } d_y < q,\\
d_y  & \text{if } q \leq d_y \leq x, \\
x & \text{if }  x < d_y. 
\end{cases}
\]
Substituting these cases into the profit function yields:
\[
\Pi(x, q, y) =
\begin{cases}
 p \,  d_y
- c_m x - c q  & \text{if } d_y < q,\\
(p-(c+\delta)) \,   d_y
- c_m x +\delta q   & \text{if } q \leq d_y \leq x, \\
 (p-(c+\delta) -c_m) \,  x
 +\delta q & \text{if }  x < d_y.
\end{cases}
\]

This function is continuous at the breakpoints $d_y = q$ and $d_y = x$. Its slope with respect to $ d_y$ is:
\[
\frac{\partial\Pi(x, q, y)}{\partial  d_y} =
\begin{cases}
 p & \text{if } d_y < q,\\
(p-(c+\delta))  & \text{if } q \leq d_y \leq x, \\
 0 & \text{if }  x < d_y.
\end{cases}
\]
Since $p\geq p-(c+\delta)\geq 0$, the slope of $\Pi(x, q, y)$ is non-increasing across regions, implying that $\Pi(x, q, y)$ is concave in $ d_y$. Thus, $\Pi(x, q, y)$  can be expressed as the pointwise minimum of its affine pieces, as given in \eqref{eq:piecewise_affine}. 
Since $d_y = (1 - p) \cdot y$ is affine in $y$, and each piece is affine in $x$ and $q$, it follows that the profit function $\Pi(x, q, y)$ is jointly concave in all its arguments.  This completes the proof. \qed 
\end{proof}

\begin{proof}{Proof of Proposition \ref{prop:DRO_mean_MAD_equiv}}
From \cite[Theorem 3]{ben1972more}, we know that an extremal distribution $\PP^\star$ that solves the inner minimization in \eqref{eq:DRO} is given by a three-point distribution located at $\yl$, $\mu$, and $\yu$, with respective probabilities given by \eqref{eq:three_probs}.  Since $d_\yl\leq q\leq x\leq d_\yu$, we observe that by Proposition~\ref{prop:piecewise_affine} the profit function coincides with the first affine piece and third affine piece at $y=\yl$ and $y=\yu$, respectively:
\begin{align*}
\Pi(x,q,\yl)= pd_\yl- c_m x - c q \\
\Pi(x,q,\yu)= (p-(c+\delta) -c_m)   x+\delta q.
\end{align*}
Hence, the expected profit coincides with the objective function in \eqref{eq:mean-MAD}.

Next, we show that the optimal solution $(x,q)$ of \eqref{eq:DRO} is located among finitely many points:
\begin{equation}
\label{eq:finite_solutions}
\{(x,q)\in\{d_\yl,d_\mu,d_\yu\}^2:x\geq q\}=\{(d_\yl,d_\yl), (d_\mu,d_\yl), (d_\mu,d_\mu), (d_\yu,d_\yl), (d_\yu,d_\mu),  (d_\yu,d_\yu)\}.
\end{equation}
We consider the three cases for the realization $y=\mu$:
\[
\Pi(x, q, \mu) =
\begin{cases}
 p  d_\mu- c_m x - c q  & \text{if } d_\mu < q,\\
(p-(c+\delta))   d_\mu- c_m x +\delta q   & \text{if } q \leq d_\mu \leq x, \\
 (p-(c+\delta) -c_m)   x+\delta q & \text{if }  x < d_\mu.
\end{cases}
\]
For each of these cases, we find that the worst-case expected profit $\EE_{\PP^\star}\Pi(x, q, Y) $ constitutes an affine function in $x$ and $q$. Hence, restricting the feasible set to either $d_\mu < q$, $q \leq d_\mu \leq x$, or $x < d_\mu$ yields a linear program whose optimal solution is attained at an extreme point. The extreme points for these cases are given as follows:
\begin{align*}
\text{ext}\left(\left\{(x,q)\in[\yl,\yu]^2:x\geq q,\; d_\mu < q\right\}\right)&=\{(d_\yu,d_\yu)\},\\
\text{ext}\left(\left\{(x,q)\in[\yl,\yu]^2:x\geq q,\;q\leq  d_\mu \leq x\right\}\right)&=\{(d_\mu,d_\yl),(d_\mu,d_\mu),(d_\yu,d_\yl),(d_\yu,d_\mu)\},\\
\text{ext}\left(\left\{(x,q)\in[\yl,\yu]^2:x\geq q,\; x < d_\mu\right\}\right)&=\{(d_\yl,d_\yl)\}. 
\end{align*}
Thus, the optimal solution of \eqref{eq:DRO} is indeed attained at one of the points in \eqref{eq:finite_solutions}.   \qed
\end{proof}

\begin{proof}{Proof of Theorem \ref{thm:closed_form}}
For each choice of $x\in\{d_\yl, d_\mu, d_\yu\}$, we analyze the optimal selection of $q$:
\begin{enumerate}
\item {$x=d_\yl$}: In this case, the only feasible production quantity is $q=d_\yl$ with an expected profit of 
\begin{equation*}
\EE_{\PP^\star}\left[\Pi(d_\yl,d_\yl,Y)\right]=(p-c_m-c)d_\yl. 
\end{equation*}
 \item {$x=d_\mu$}: The feasible choices are $q\in\{d_\yl,d_\mu\}$, and the corresponding expected profits are:
\begin{align*}
&\EE_{\PP^\star}[\Pi(d_\mu,d_\yl,Y)]=w_\yl p d_\yl + (w_\mu+w_\yu)pd_\mu - c_md_\mu  - w_\yl c d_\yl-(w_\mu+w_\yu)(cd_\mu + \delta(d_\mu -d_\yl)),\\
&\EE_{\PP^\star}[\Pi(d_\mu,d_\mu,Y)]=w_\yl p d_\yl + (w_\mu+w_\yu)pd_\mu - c_md_\mu - cd_\mu.
\end{align*}
Comparing the two cases, we find that if $(w_\mu+w_\yu)\delta\geq w_\yl c$ then the solution $q=d_\mu$ is preferred to $q=d_\yl$; otherwise, the solution $q=d_\yl$ is preferred. 
\item {$x=d_\yu$}: in this case, the feasible production quantities are $q\in\{d_\yl,d_\mu,d_\yu\}$, with the corresponding expected profits: 
\begin{align*}
&\EE_{\PP^\star}[\Pi(d_\yu,d_\yl,Y)]=pd_\mu-c_md_\yu-cd_\mu-w_\mu\delta(d_\mu-d_\yl)-w_\yu\delta(d_\yu-d_\yl),\\
&\EE_{\PP^\star}[\Pi(d_\yu,d_\mu,Y)]=pd_\mu-c_md_\yu-(w_\yl+w_\mu)cd_\mu-w_\yu c d_\yu - w_\yu\delta(d_\yu-d_\mu),\\
&\EE_{\PP^\star}[\Pi(d_\yu,d_\yu,Y)]=pd_\mu-c_md_\yu-cd_\yu. 
\end{align*}
Furthermore, we have that if $w_\yu\delta\geq(w_\yl+w_\mu) c$ then the solution $q=d_\yu$ is preferred; if $w_\yu\delta<(w_\yl+w_\mu) c$ and $(w_\mu+w_\yu)\delta\geq w_\yl c$ then the solution $q=d_\mu$ is preferred; otherwise, the solution $q=d_\yl$ is preferred. 
\end{enumerate}

Next, we determine the optimal solution. We have three cases: 
\begin{enumerate}
\item {$w_\yu\delta\geq(w_\yl+w_\mu) c$}: Note that this condition implies that $(w_\mu+w_\yu)\delta\geq w_\yl c$. Based on the analysis above, the possible choices for $(x,q)$ are $(d_\yl,d_\yl)$,  $(d_\mu,d_\mu)$, and  $(d_\yu,d_\yu)$. Comparing the expected profits, we find that: If $c_m+c\leq w_\yl p$ then the solution $(x,q)=(d_\yu,d_\yu)$ is preferred to $(x,q)=(d_\mu,d_\mu)$; and if $c_m + c\leq (w_\mu+w_\yu)p$ then the solution $(x,q)=(d_\mu,d_\mu)$ is preferred to $(x,q)=(d_\yl,d_\yl)$.  Since $c_m+c\leq w_\yl p$ implies $c_m + c\leq (w_\mu+w_\yu)p$, we obtain that $(x,q)=(d_\yu,d_\yu)$ is favored over $(x,q)=(d_\yl,d_\yl)$ if $c_m+c\leq w_\yl p$. 
Thus, in summary:
\begin{enumerate}
\item If $c_m+c\leq w_\yl p$ then the solution $(x,q)=(d_\yu,d_\yu)$ is optimal. 

\item If $w_\yl p < c_m + c\leq (w_\mu+w_\yu)p$ then the solution $(x,q)=(d_\mu,d_\mu)$ is optimal.
\item If $ (w_\mu+w_\yu)p< c_m + c$ then the solution $(x,q)=(d_\yl,d_\yl)$ is optimal. 
\end{enumerate}

\item {$w_\yu\delta<(w_\yl+w_\mu) c$ and $(w_\mu+w_\yu)\delta\geq w_\yl c$}: In this case, the possible choices for $(x,q)$ are $(d_\yl,d_\yl)$, $(d_\mu, d_\mu)$, and $(d_\yu,d_\mu)$. Comparing the expected profits, we obtain: If $c_m\leq w_\yu(p-c-\delta)$ then the solution $(x,q)=(d_\yu,d_\mu)$ is preferred to $(x,q)=(d_\mu,d_\mu)$; and if $c_m + c\leq (w_\mu+w_\yu)p$ then the solution $(x,q)=(d_\mu,d_\mu)$ is preferred to $(x,q)=(d_\yl,d_\yl)$. Next, since $(w_\mu+w_\yu)\delta\geq w_\yl c$, we have that $c_m\leq w_\yu(p-c-\delta)$ implies
\begin{equation*}
c_m\leq (w_\mu+w_\yu)(p-c-\delta) \leq (w_\mu+w_\yu)(p-c) - w_\yl c= (w_\mu+w_\yu)p -( w_\yl+w_\mu+w_\yu) c. 
\end{equation*}
That is, $c_m\leq w_\yu(p-c-\delta)$ implies $c_m + c\leq (w_\mu+w_\yu)p$. Thus, if  $c_m\leq w_\yu(p-c-\delta)$ then the solution $(x,q)=(d_\yu,d_\mu)$ is also preferred to $(x,q)=(d_\yl,d_\yl)$. In summary:
\begin{enumerate}
\item If $c_m\leq w_\yu(p-c-\delta)$ then the solution $(x,q)=(d_\yu,d_\mu)$ is optimal. 
\item If $ w_\yu(p-c-\delta)<c_m$ and $c_m + c\leq (w_\mu+w_\yu)p$ then the solution $(x,q)=(d_\mu,d_\mu)$ is optimal. 
\item If $(w_\mu+w_\yu)p<c_m + c$ then the solution $(x,q)=(d_\yl,d_\yl)$ is optimal. 
\end{enumerate}

\item $(w_\mu+w_\yu)\delta< w_\yl c$: In this case, the possible choices  for $(x,q)$ are $(d_\yl,d_\yl)$,  $(d_\mu,d_\yl)$, and  $(d_\yu,d_\yl)$.  Comparing the expected profits, we get: If $c_m\leq w_\yu(p-c-\delta)$ then the solution $(x,q)=(d_\yu,d_\yl)$ is preferred to $(x,q)=(d_\mu,d_\yl)$; and if  $c_m\leq (w_\mu+w_\yu)(p-c-\delta)$ then the solution $(x,q)=(d_\mu,d_\yl)$ is preferred to $(x,q)=(d_\yl,d_\yl)$.  Thus:
\begin{enumerate}
\item If $c_m\leq w_\yu(p-c-\delta)$ then the solution $(x,q)=(d_\yu,d_\yl)$ is optimal. 
\item If $ w_\yu(p-c-\delta)<c_m\leq (w_\mu+w_\yu)(p-c-\delta)$ then the solution $(x,q)=(d_\mu,d_\yl)$ is optimal. 
\item If $(w_\mu+w_\yu)(p-c-\delta)<c_m$ then the solution $(x,q)=(d_\yl,d_\yl)$ is optimal. 
\end{enumerate}
\end{enumerate}
Summarizing all the cases yields the desired result. This completes the proof. 
\qed
\end{proof}

\begin{proof}{Proof of Theorem \ref{thm:Wasserstein}}
By \cite[Remark 1]{blanchet2019quantifying},  the Wasserstein DRO problem is equivalent to the following finite-dimensional maximization problem under the empirical distribution:
\begin{align*}
\max&\;\; -\epsilon^2\lambda+\frac{1}{N}\sum_{i\in[N]}\inf_{z\in[\yl,\yu]}\left[\Pi(x,q,z)+\lambda (z-y_i)^2\right]\\
\st &\;\; x,q,\lambda\in\RR_+\\
&\;\; x\geq q. 
\end{align*}
Substituting the piecewise affine structure of the profit function $\Pi(x,q,z)$ from \eqref{eq:piecewise_affine} and introducing the epigraphical variables $(\gamma_i)_{i\in[N]}$ to bring each inner supremum into the constraint system, we obtain the equivalent problem:
\begin{align*}
\max&\;\; -\epsilon^2\lambda+\frac{1}{N}\sum_{i\in[N]}\gamma_i\\
\st &\;\; x,q,\lambda\in\RR_+,\;\bm\gamma\in\RR^N\\
&\;\; x\geq q\\
& \gamma_i \leq\inf_{z\in[\yl,\yu]}p   (1-p)z- c_m x - c q +\lambda (z-y_i)^2\qquad\forall i\in[N]\\
 & \gamma_i \leq\inf_{z\in[\yl,\yu]}(p-(c+\delta))    (1-p)z
- c_m x +\delta q+\lambda (z-y_i)^2\qquad\forall i\in[N]\\
 & \gamma_i \leq \inf_{z\in[\yl,\yu]} (p-(c+\delta) -c_m)   x
 +\delta q+\lambda (z-y_i)^2\qquad\forall i\in[N].
\end{align*}
We next reformulate the first set of constraints using convex duality. Consider, for each $i\in[N]$,
\begin{align}
\label{eq:robust_constraint_1}
\gamma_i \leq \inf_{z \in [\yl, \yu]}  p(1-p)z - c_m x - c q + \lambda (z - y_i)^2.
\end{align}
Applying Lagrangian duality to the minimization problem yields:
\begin{align*}
& \;\gamma_i \leq\inf_{z\in[\yl,\yu]}p   (1-p)z- c_m x - c q +\lambda (z-y_i)^2\\
\Longleftrightarrow&\;\gamma_i \leq\inf_{z\in\RR} \sup_{\theta_i,\eta_i\in\RR_+} p   (1-p)z- c_m x - c q +\lambda (z-y_i)^2-\theta_i(z-\yl)-\eta_i(\yu-z)\\
\Longleftrightarrow&\;\gamma_i \leq\sup_{\theta_i,\eta_i\in\RR_+} \inf_{z\in\RR} p   (1-p)z- c_m x - c q +\lambda (z-y_i)^2-\theta_i(z-\yl)-\eta_i(\yu-z)\\
\Longleftrightarrow&\;\exists\theta_i,\eta_i\in\RR_+:\gamma_i \leq \inf_{z\in\RR} p   (1-p)z- c_m x - c q +\lambda (z-y_i)^2-\theta_i(z-\yl)-\eta_i(\yu-z),
\end{align*}
where the interchange between the infimum and the supremum in the third line follows from strong duality for convex programs with polyhedral constraints since the objective function $p   (1-p)z- c_m x - c q +\lambda (z-y_i)^2$ is a convex quadratic function in $z$ and the feasible set $[\yl,\yu]$ is polyhedral. Thus, the constraint \eqref{eq:robust_constraint_1} is satisfied if and only if there exist $\theta_i,\eta_i\in\RR_+$ such that  
\[\gamma_i \leq \inf_{z\in\RR} p   (1-p)z- c_m x - c q +\lambda (z-y_i)^2-\theta_i(z-\yl)-\eta_i(\yu-z).\] 
This constraint is equivalent to the semidefinite constraint
\begin{align*}
&\;\gamma_i \leq \inf_{z\in\RR} p   (1-p)z- c_m x - c q +\lambda (z-y_i)^2-\theta_i(z-\yl)-\eta_i(\yu-z)\\
\Longleftrightarrow & \;\gamma_i \leq \ p   (1-p)z- c_m x - c q +\lambda (z-y_i)^2-\theta_i(z-\yl)-\eta_i(\yu-z) \qquad\forall z\in\RR\\
\Longleftrightarrow & \;\gamma_i \leq \  \lambda z^2+(p(1-p)-\theta_i+\eta_i-2\lambda y_i) z + \lambda y_i^2- c_m x - c q +\theta_i\yl-\eta_i\yu \qquad\forall z\in\RR\\
\Longleftrightarrow & \;\bm 0\preceq\begin{bmatrix}
\lambda & \frac{1}{2}(p(1-p)-\theta_i+\eta_i-2\lambda y_i) \\
\frac{1}{2} (p(1-p)-\theta_i+\eta_i-2\lambda y_i)  & \lambda y_i^2- c_m x - c q +\theta_i\yl-\eta_i\yu-\gamma_i
\end{bmatrix},
\end{align*}
which can further be reformulated as a second-order conic constraint:
\begin{align*}
\Longleftrightarrow & \;
\left\|\begin{bmatrix}
p(1-p)-\theta_i+\eta_i-2\lambda y_i\\
\lambda y_i^2- c_m x - c q +\theta_i\yl-\eta_i\yu-\gamma_i-\lambda
\end{bmatrix}\right\| 
 \leq \lambda y_i^2- c_m x - c q +\theta_i\yl-\eta_i\yu-\gamma_i+\lambda,\\
&\; \lambda y_i^2- c_m x - c q +\theta_i\yl-\eta_i\yu-\gamma_i\geq 0. 
\end{align*}
Thus, the first set of constraints can be equivalently rewritten as:
\begin{align*}
&\; \gamma_i \leq\inf_{z\in[\yl,\yu]}p   (1-p)z- c_m x - c q +\lambda (z-y_i)^2\qquad\forall i\in[N]\\
\Longleftrightarrow &\;\exists\bm\theta,\bm\eta\in\RR^N_+:\\
&\left\|\begin{bmatrix}
p(1-p)-\theta_i+\eta_i-2\lambda y_i\\
\lambda y_i^2- c_m x - c q +\theta_i\yl-\eta_i\yu-\gamma_i-\lambda
\end{bmatrix}\right\| 
 \leq \lambda y_i^2- c_m x - c q +\theta_i\yl-\eta_i\yu-\gamma_i+\lambda\quad\forall i\in[N]\\
&\; \lambda y_i^2- c_m x - c q +\theta_i\yl-\eta_i\yu-\gamma_i\geq 0 \quad \forall i\in[N]\\
\end{align*}

The second set of constraints can be reformulated in a similar way to yield:
\begin{align*}
 & \gamma_i \leq\inf_{z\in[\yl,\yu]}(p-(c+\delta))    (1-p)z
- c_m x +\delta q+\lambda (z-y_i)^2\qquad\forall i\in[N]\\
\Longleftrightarrow &\;\exists\bm\phi,\bm\psi\in\RR^N_+:\\
&\left\|\begin{bmatrix}
(p-(c+\delta))(1-p)-\phi_i+\psi_i-2\lambda y_i\\
\lambda y_i^2- c_m x +\delta q +\phi_i\yl-\psi_i\yu-\gamma_i-\lambda
\end{bmatrix}\right\| 
 \leq \lambda y_i^2- c_m x +\delta q +\phi_i\yl-\psi_i\yu-\gamma_i+\lambda\quad\forall i\in[N]\\
&\; \lambda y_i^2- c_m x +\delta q +\phi_i\yl-\psi_i\yu-\gamma_i\geq 0 \quad \forall i\in[N].
\end{align*}

For the third set of constraints, observe that 
\[\inf_{z\in[\yl,\yu]} (p-(c+\delta) -c_m)   x
 +\delta q+\lambda (z-y_i)^2= (p-(c+\delta) -c_m)   x+\delta q\]
 since the quadratic term $\lambda (z-y_i)^2$ is minimized at $z=y_i\in[\yl,\yu]$. Thus, the constraint system simplifies to 
\begin{align*}
 & \gamma_i \leq  (p-(c+\delta) -c_m)   x
 +\delta q+\lambda (z-y_i)^2\qquad\forall i\in[N]\\
 \Longleftrightarrow &\gamma_i \leq \inf_{z\in[\yl,\yu]} (p-(c+\delta) -c_m)   x
 +\delta q\qquad\forall i\in[N]
\end{align*}
Substituting this expression along with the previous two conic reformulations yields the desired tractable reformulation. This completes the proof. \qed
\end{proof}

\begin{proof}{Proof of Theorem \ref{thm:wtc_ratio}}
Because $\underline{y}>0$, every feasible policy yields $q>0$, and hence
$\EE_{\mathbb{Q}}[\min\{d_Y,\,q+q_\delta\}]>0$.  Fix any
$\mathbb{Q}\in\mathcal{P}$.  The ratio constraint is equivalent to
\begin{align*}
&\;\frac{\EE_\Q[(q-d_Y)^+]+\EE_\Q[(x-q-q_\delta)^+]}{\EE_\Q[\min\{d_Y, q+q_\delta\}]}\leq \tau \\
\Longleftrightarrow & \;\EE_\Q[(q-d_Y)^+]+\EE_\Q[(x-q-q_\delta)^+]\leq \tau \EE_\Q[\min\{d_Y, q+q_\delta\}]. 
\end{align*}
Next, rearranging the term we obtain the expectation constraint: 
\begin{align}
\label{eq:expected_waste_min_fullil} 
\Longleftrightarrow & \;\EE_\Q\left[(q-d_Y)^++(x-q-q_\delta)^+-\tau\min\{d_Y, q+q_\delta\}\right]\leq 0. 
\end{align}
Note that the two deadstock quantities $(q-d_Y)^+$ and $(x-q-q_\delta)^+$ can be combined into the total waste $(x-d_Y)^+$. Furthermore, the proof of Proposition \ref{prop:piecewise_affine} shows that the fulfilled demand is given by
\[
\min \left\{ d_Y, q + q_\delta(x, q, Y) \right\} =
\begin{cases}
d_Y & \text{if } d_Y < q,\\
d_Y  & \text{if } q \leq d_Y \leq x, \\
x & \text{if }  x < d_Y,
\end{cases}
\]
which can be concisely represented as 
\[
\min \left\{ d_Y, q + q_\delta(x, q, Y) \right\} = \min\{d_Y, x\}. 
\]
Hence, the term inside the expectation \eqref{eq:expected_waste_min_fullil} can be rewritten as 
\begin{align*}
 & (x-d_Y)^++\tau \max\{-d_Y, -x\}=\max\{x-d_Y,0\}+\tau \max\{-d_Y, -x\}
\end{align*}

Next, we consider the two cases on the max terms:
\begin{enumerate}
\item $x\geq d_Y$: We have $\max\{x-d_Y,0\}=x-d_Y$ and $\max\{-d_Y, -x\}=-d_Y$, which implies that $(x-d_Y)^++\tau \max\{-d_Y, -x\}= x-(1+\tau)d_Y$.
\item $x< d_Y$: $\max\{x-d_Y,0\}=0$ and $\max\{-d_Y, -x\}=-x$, which implies that $(x-d_Y)^++\tau \max\{-d_Y, -x\}= -\tau x$.
\end{enumerate}
We thus obtain 
\begin{equation*}
\max\{x-d_Y,0\}+\tau \max\{-d_Y, -x\}=\max\{x-(1+\tau)d_Y,-\tau x\}.
\end{equation*}

In summary, the distributionally robust waste-to-consumption ratio constraint can be represented as 
\begin{align*}
&\;\frac{\EE_\Q[(q-d_Y)^+]+\EE_Q[(x-q-q_\delta)^+]}{\EE_\Q[\min\{d_Y, q+q_\delta\}]}\leq \tau\qquad\forall\Q\in\mP \\
\Longleftrightarrow & \;\EE_\Q[\max\{x-(1+\tau)d_Y,-\tau x\}]\leq 0\qquad\forall\Q\in\mP\\
\Longleftrightarrow & \;\sup_{\Q\in\mP}\EE_\Q[\max\{x-(1+\tau)d_Y,-\tau x\}]\leq 0.
\end{align*}
This completes the proof. \qed
\end{proof}
\begin{proof}{Proof of Proposition \ref{prop:mean-MAD_WTC}}
Since $\max\{x-(1+\tau)d_y,-\tau x\}$ is convex in $y$, by \citep[Theorem 3]{ben1972more}, an extremal distribution $\PP^\star$ that solves the worst-case expectation in \eqref{eq:wce_constraint} is given by a three-point distribution located at $\yl$, $\mu$, and $\yu$, with respective probabilities given by \eqref{eq:three_probs}. At $y=\yl$, we find that $\max\{x-(1+\tau)d_y,-\tau x\}=x-(1+\tau)d_\yl$ while at  $y=\yu$, we have $\max\{x-(1+\tau)d_y,-\tau x\}=-\tau x$.  Hence, the worst-case expectation simplifies to 
\begin{equation*}
\sup_{\PP\in\mP}\EE_\PP[\max\{x-(1+\tau)d_Y,-\tau x\}]=w_\yl(x-(1+\tau)d_\yl) + w_\mu\max\{x-(1+\tau)d_\mu,-\tau x\}- w_\yu \tau x.
\end{equation*}
The reformulation for the objective function has been derived in Proposition \ref{prop:DRO_mean_MAD_equiv}. This completes the proof. \qed
\end{proof}

\begin{proof}{Proof of Proposition \ref{prop:Wass_WTC}} 
Since the reformulation of the objective function has been derived in Theorem \ref{thm:Wasserstein}, we only need to consider the constraint.  Invoking \cite[Remark 1]{blanchet2019quantifying}, we obtain the following minimization problem: 
\begin{align*}
&\;\;\sup_{\Q\in\mP_\epsilon}\EE_{\Q}[\max\{x-(1+\tau)d_Y,-\tau x\}]\\
=&\;\;\min_{\alpha\in\RR_+}\epsilon^2\alpha + \frac{1}{N}\sum_{i\in[N]} \sup_{z\in[\yl,\yu]}\max\{x-(1+\tau)(1-p)z,-\tau x\}-\alpha(z-y_i)^2. 
\end{align*}
Introducing epigraphical variables $(\kappa_i)_{i\in[N]}$ to bring the suprema terms into the constraint system yields:
\begin{align*}
=\min&\;\epsilon^2\alpha + \frac{1}{N}\sum_{i\in[N]}\kappa_i\\
\st &\;\alpha\in\RR_+,\;\bm\kappa\in\RR^N\\
&\;\kappa_i\geq  \sup_{z\in[\yl,\yu]}x-(1+\tau)(1-p)z-\alpha(z-y_i)^2\quad\forall i\in[N]\\
&\;\kappa_i\geq  \sup_{z\in[\yl,\yu]} -\tau x-\alpha(z-y_i)^2\quad\forall i\in[N].
\end{align*}
Since $\sup_{z\in[\yl,\yu]} -\tau x-\alpha(z-y_i)^2=  -\tau x$, the last constraint system simplifies to 
\[\kappa_i\geq   -\tau x\quad\forall i\in[N].\]

Next, as in the proof of Theorem \ref{thm:Wasserstein}, we can reformulate the first set of constraints as the second-order conic constraints:
\begin{align*}
&\; \kappa_i \geq  \sup_{z\in[\yl,\yu]}x-(1+\tau)(1-p)z-\alpha(z-y_i)^2\quad\forall i\in[N]\\
\Longleftrightarrow &\;\exists\bm\beta,\bm\zeta\in\RR^N_+:\\
&\left\|\begin{bmatrix}
(1+\tau)(1-p)-\beta_i+\zeta_i-2\alpha y_i\\
\alpha y_i^2- x +\beta_i\yl-\zeta_i\yu+\kappa_i-\alpha
\end{bmatrix}\right\| 
 \leq \alpha y_i^2- x +\beta_i\yl-\zeta_i\yu+\kappa_i+\alpha\quad\forall i\in[N]\\
&\; \alpha y_i^2- x +\beta_i\yl-\zeta_i\yu+\kappa_i\geq 0 \quad \forall i\in[N].
\end{align*}

In summary, the constraint $\sup_{\PP\in\mP}\EE_{\PP_\epsilon}[\max\{x-(1+\tau)d_Y,-\tau x\}]\leq 0$ is satisfied if and only if there exist $\alpha\in\RR_+$, $\bm\kappa\in\RR^N$, $\bm\beta,\bm\zeta\in\RR^N_+$ such that 
\begin{align*}
& \epsilon^2\alpha + \frac{1}{N}\sum_{i\in[N]}\kappa_i\leq 0 \\
&\left\|\begin{bmatrix}
(1+\tau)(1-p)-\beta_i+\zeta_i-2\alpha y_i\\
\alpha y_i^2- x +\beta_i\yl-\zeta_i\yu+\kappa_i-\alpha
\end{bmatrix}\right\| 
 \leq \alpha y_i^2- x +\beta_i\yl-\zeta_i\yu+\kappa_i+\alpha\quad\forall i\in[N]\\
&\; \alpha y_i^2- x +\beta_i\yl-\zeta_i\yu+\kappa_i\geq 0 \quad \forall i\in[N]\\
&\; \kappa_i\geq   -\tau x\quad\forall i\in[N]. 
\end{align*}
Thus, the claim follows. 
\qed
\end{proof}

\begin{proposition}
\label{prop:exp_prof_uni}
The expected profit of the quick response system under a uniform distribution can be calculated as:
\[
\mathbb{E}\Pi = p \cdot \frac{x(2 - 2p - x)}{2(1 - p)} - c_m x - c q - (c + \delta) \cdot \frac{(x - q)(2 - 2p + q - x)}{2(1 - p)}.
\]
\end{proposition}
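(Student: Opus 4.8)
The plan is to use the simplified representation of the profit function established in the proof of Proposition~\ref{prop:piecewise_affine}, where it is shown that $\min\{d_y,\,q+q_\delta(x,q,y)\}=\min\{d_y,\,x\}$, so that
\[
\Pi(x,q,y)\;=\;p_f\,\min\{d_y,\,x\}\;-\;c_m x\;-\;c q\;-\;(c+\delta)\,q_\delta(x,q,y),\qquad q_\delta(x,q,y)=\min\{(d_y-q)^+,\,x-q\}.
\]
By linearity of expectation this gives
\[
\EE\,\Pi\;=\;p_f\,\EE\!\bigl[\min\{d_Y,x\}\bigr]\;-\;c_m x\;-\;c q\;-\;(c+\delta)\,\EE\!\bigl[q_\delta(x,q,Y)\bigr],
\]
whose four terms correspond one-for-one to the four terms of the claimed formula, so it remains only to evaluate the two expectations.

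Since $Y\sim U[0,1]$, the realized demand $d_Y=(1-p_f)Y$ is uniform on $[0,1-p_f]$ with density $1/(1-p_f)$; I would work in the regime $0\le q\le x\le 1-p_f$, which is without loss of optimality because $c_m>0$ makes procuring fabric beyond the maximal possible demand $1-p_f$ strictly wasteful. First I would split $\EE[\min\{d_Y,x\}]$ at the breakpoint $d_Y=x$, integrating $t$ over $[0,x]$ and the constant $x$ over $[x,1-p_f]$, to obtain
\[
\EE\!\bigl[\min\{d_Y,x\}\bigr]=\frac{1}{1-p_f}\!\left(\int_0^x t\,\mathrm{d}t+\int_x^{1-p_f}\!\!\! x\,\mathrm{d}t\right)=\frac{x\bigl(2(1-p_f)-x\bigr)}{2(1-p_f)},
\]
which is precisely the first term of the statement. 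Next I would use the piecewise form of $q_\delta$ from the proof of Proposition~\ref{prop:piecewise_affine} ($q_\delta=0$ on $[0,q]$, $q_\delta=d_Y-q$ on $[q,x]$, and $q_\delta=x-q$ on $[x,1-p_f]$) to get
\[
\EE\!\bigl[q_\delta(x,q,Y)\bigr]=\frac{1}{1-p_f}\!\left(\int_q^x (t-q)\,\mathrm{d}t+\int_x^{1-p_f}\!\!\!(x-q)\,\mathrm{d}t\right)=\frac{1}{1-p_f}\!\left(\frac{(x-q)^2}{2}+(x-q)(1-p_f-x)\right),
\]
and then collect the common factor $(x-q)$, rewrite the bracket $\tfrac{x-q}{2}+(1-p_f-x)$ as a single fraction with denominator $2(1-p_f)$, and substitute both expectations back into the decomposition above to recover the stated closed form for $\EE\,\Pi$.

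There is no genuine obstacle here; the computation is elementary once the profit function is in the $p_f\min\{d_Y,x\}-c_m x-cq-(c+\delta)q_\delta$ form. The only points that require care are: (i) invoking the identity $\min\{d_Y,q+q_\delta\}=\min\{d_Y,x\}$ so that the revenue term reduces to a one-dimensional integral of a piecewise-affine function; (ii) correctly identifying the three integration regions $[0,q]$, $[q,x]$, $[x,1-p_f]$ together with their endpoints when integrating $q_\delta$; and (iii) keeping in mind the implicit restriction $q\le x\le 1-p_f$ — outside this regime the third affine piece of $\Pi$ never activates and the closed form must be adjusted, but that case is dominated. An equivalent route is to integrate the three affine pieces of $\Pi$ from Proposition~\ref{prop:piecewise_affine} directly against the uniform density; the bookkeeping is identical.
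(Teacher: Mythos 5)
Your proposal is correct and follows essentially the same route as the paper's proof: both decompose the expected profit as $p_f\,\EE[\min\{d_Y,x\}] - c_m x - c q - (c+\delta)\,\EE[q_\delta]$ (using $\min\{d_Y,q+q_\delta\}=\min\{d_Y,x\}$) and evaluate the two expectations by elementary piecewise integration; your integrating in the demand variable $d_Y\sim U[0,1-p_f]$ rather than in $Y\sim U[0,1]$ is only a change of variables. One caveat: your (correct) algebra yields $\EE[q_\delta]=\frac{(x-q)(2-2p_f-x-q)}{2(1-p_f)}$, which matches the paper's own intermediate derivation but not the displayed statement, whose factor $(2-2p_f+q-x)$ has the sign of $q$ flipped; so you do not literally ``recover the stated closed form''---rather, your computation exposes a sign typo in the proposition as stated (a quick check with $p_f=0$, $q=0.5$, $x=1$ confirms $\EE[q_\delta]=0.125$, consistent with the $-q$ version only).
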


\begin{proof}{Proof of Proposition \ref{prop:exp_prof_uni}}

The expected profit is given by the formula: 

\(
\mathbb{E} \Pi^j = p \cdot \mathbb{E} \min \left\{ D_f(Y), q + q_{\delta}(x,q,Y) \right\} - c_m x - c q - (c + \delta) \cdot \mathbb{E} \, q_{\delta}(x,q,Y)
\),

where: 
\(
q_{\delta}(x, q, Y) = \min \left\{ (D_f(Y) - q)^+, x - q \right\}
\)

We want to derive a closed-form expression for \( \mathbb{E} \Pi^Q \) assuming \( \mathbb{P} \sim U[0,1] \).

\[
\min \left\{ D_f(Y), q + q_{\delta}(x,q,Y) \right\} = \min \left\{ (1 - p)Y, q + q_{\delta}(x,q,Y) \right\}
\]

The cases are:
\[
q + q_{\delta} = 
\begin{cases}
q, & Y \leq \frac{q}{1 - p} \\
(1 - p) Y, & \frac{q}{1 - p} < Y \leq \frac{x}{1 - p} \\
x, & Y > \frac{x}{1 - p}
\end{cases}
\]

with 
\[
D_f(Y) = (1 - p) Y
\]

So,
\[
g(Y) = \min \left\{ D_f(Y), q + q_{\delta}(x,q,Y) \right\} = 
\begin{cases}
(1 - p) Y, & Y \leq \frac{x}{1 - p} \\
x, & Y > \frac{x}{1 - p}
\end{cases}
\]

The expected value of \( g(Y) \) is:
\[
\mathbb{E}_g = \int_0^1 g(y) \, dy = \int_0^{\frac{x}{1 - p}} (1 - p) y \, dy + \int_{\frac{x}{1 - p}}^1 x \, dy = \frac{x (2 - 2p - x)}{2 (1 - p)} = t_1
\]

The expected value of \( q_{\delta} \) is:
\begin{align*}
\mathbb{E} \, q_{\delta} &= \int_0^{\frac{q}{1 - p}} 0 \, dy 
+ \int_{\frac{q}{1 - p}}^{\frac{x}{1 - p}} \left[ (1 - p) y - q \right] dy 
+ \int_{\frac{x}{1 - p}}^{1} (x - q) \, dy \\
&= (1 - p) \cdot \frac{x^2 - q^2}{2 (1 - p)^2} 
- q \cdot \frac{x - q}{1 - p} 
+ (x - q) \cdot \frac{1 - \frac{x}{1 - p}}{1 - p} \\
&= \frac{(x - q)(x + q) - 2q(x - q) + (x - q)(2 - 2p - 2x)}{2 (1 - p)} \\
&= \frac{(x - q) \left[ (x + q) - 2q + 2 - 2p - 2x \right]}{2 (1 - p)} \\
&= \frac{(x - q) \left( - x - q + 2 - 2p \right)}{2 (1 - p)}=t_2
\end{align*}

Thus,
\[
\mathbb{E} \Pi^Q = p \cdot t_1 - c_m x - c q - (c + \delta) \cdot t_2
\]

Which gives the final expression:
\[
\mathbb{E} \Pi^Q = p \cdot \frac{x (2 - 2p - x)}{2 (1 - p)} - c_m x - c q - (c + \delta) \cdot \frac{(x - q)(2 - 2p + q - x)}{2 (1 - p)}
\]
\qed
\end{proof}

\section{Auxiliary experimental results: a comparison with the SAA method}

In data-driven decision making, SAA is also a popular method. The SAA framework operates by replacing the true, unknown probability distribution with the empirical distribution constructed from the available data. An optimal policy is then derived by solving the problem with respect to this empirical measure. While intuitive, the empirical distribution can be a poor and volatile proxy for the true underlying distribution when the sample size is small, which is very common in real-world retail operations problems. Consequently, a policy optimized via SAA can overfit to the small training dataset. While such a policy may perform well in-sample, it often lacks robustness and exhibits poor out-of-sample performance when faced with new realizations from the true distribution. 

To empirically compare the performance of SAA against our DRO models, we evaluate their out-of-sample performance under varying sample sizes $N$. Without loss of generality, we assume the underlying true demand follows the Lognormal distribution as defined in Section~\ref{sec:experiment}, and we fix the quick response cost at $\delta = 0.1$. All reported performance metrics are averaged over 50 independent trials for each sample size.

\begin{figure}[h]
\label{fig:uniform_test_no_wtc}
\centering
\begin{subfigure}[t]{0.49\textwidth}
\centering
\includegraphics[width=1.0\textwidth]{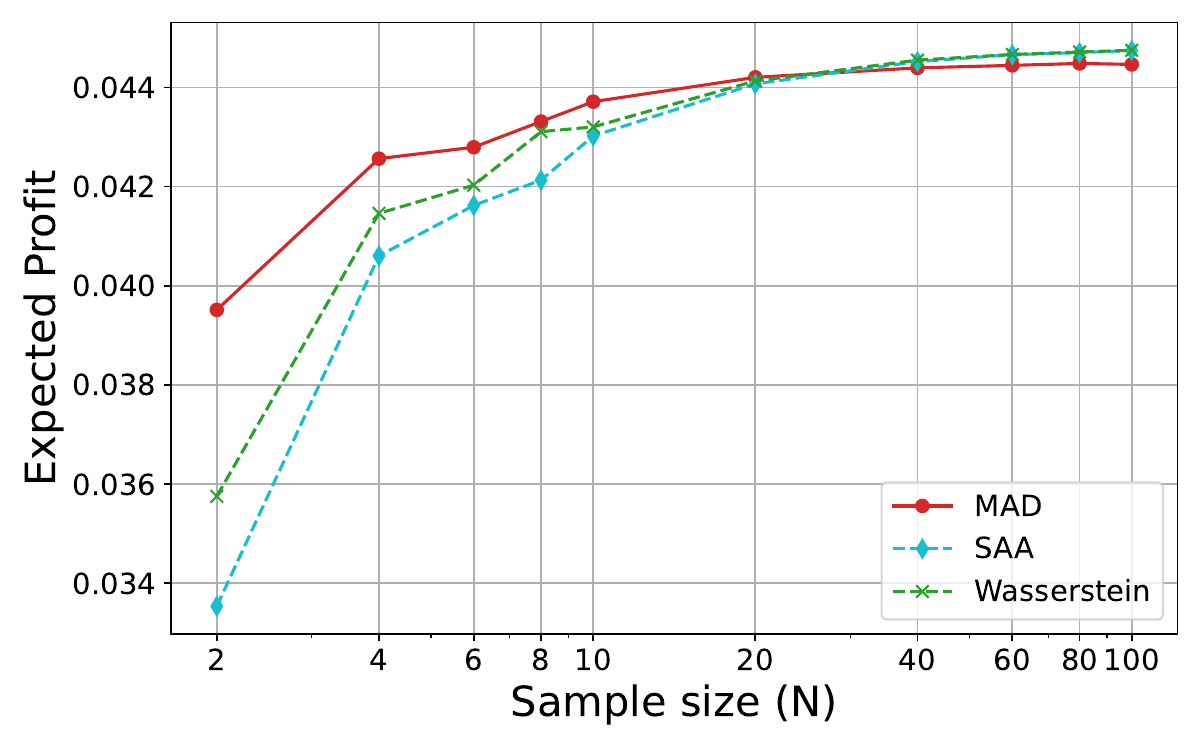}
\caption{Optimal policy}
\end{subfigure}
\begin{subfigure}[t]{0.49\textwidth}
\centering
\includegraphics[width=1.0\textwidth]{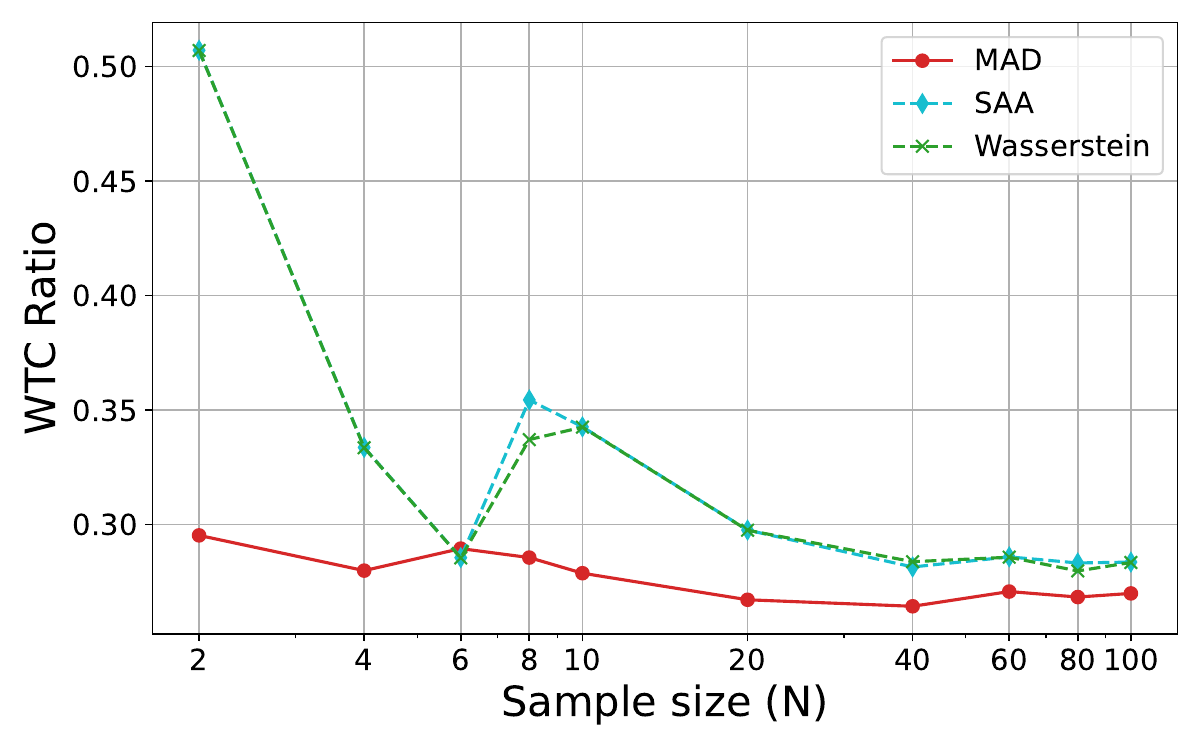}
\caption{Expected profit}
\end{subfigure}
\caption{A comparison of expected profits and WTC ratios for our DRO models against the SAA method for different sample sizes. Across all subfigures, the blue line represents the SAA model, the red line represents our DRO model with a MAD ambiguity set, and the green line represents our DRO model with a Wasserstein ambiguity set.}
\label{fig:SAA}
\end{figure}

Figure~\ref{fig:SAA} reveals several key insights into the performance of the different models. In the small-sample regime, the MAD model yields the highest out-of-sample profit. This is because moment information, such as the mean and MAD, can be estimated more accurately and robustly from limited data than the entire demand distribution. This stability makes moment-based DRO particularly effective when data is scarce.

As the sample size $N$ increases, we observe that the profits from the Wasserstein DRO and SAA models steadily improve, eventually surpassing the MAD model. This behavior is expected, as both Wasserstein DRO and SAA are asymptotically consistent: their policies can converge to the true optimal policy as the empirical distribution approaches the true distribution. In contrast, the MAD model does not share this property. Even if the moment information were perfectly known, its ambiguity set still considers all distributions sharing these moments, leading to a persistently conservative policy. This inherent conservatism results in a profit gap at large sample sizes but also yields less waste. 

Finally, we observe that the Wasserstein DRO model uniformly dominates the SAA model across all sample sizes, and their performance curves share a similar shape. This is because the Wasserstein ambiguity set is constructed around the empirical distribution used by SAA. By considering potential noises and disturbances around the empirical distribution, the Wasserstein model improves the policy of the SAA approach in the out-of-sample circumstances. This consistent and theoretically-grounded dominance is precisely why we omit SAA in the main analysis of Section~\ref{sec:experiment}, as the Wasserstein DRO model represents a strictly superior data-driven approach.

\end{APPENDICES}

\end{document}